\providecommand*{\input@path}{}
\g@addto@macro\input@path{{../common/}}
\newcommand{\rest}{\left.\kern-2\nulldelimiterspace\right|_}
\newcommand{\bbE}{{\mathbb E}}
\newcommand{\bbN}{{\mathbb N}}
\newcommand{\bbR}{{\mathbb R}}
\newcommand{\fkS}{{\mathfrak S}}
\newcommand{\rmd}{{\mathrm d}}
\newcommand{\bsb}{{\boldsymbol{b}}}
\newcommand{\bsm}{{\boldsymbol{m}}}
\newcommand{\bsx}{{\boldsymbol{x}}}
\newcommand{\bsy}{{\boldsymbol{y}}}
\newcommand{\bsz}{{\boldsymbol{z}}}
\newcommand{\bszero}{{\boldsymbol{0}}} 
\newcommand{\bsalpha}{{\boldsymbol{\alpha}}}
\newcommand{\bsgamma}{{\boldsymbol{\gamma}}}
\newcommand{\bsnu}{{\boldsymbol{\nu}}}
\newcommand{\bsrho}{{\boldsymbol{\rho}}}
\newcommand{\bssigma}{{\boldsymbol{\sigma}}}
\newcommand{\bsDelta}{{\boldsymbol{\Delta}}}
\newcommand{\calA}{{\mathcal{A}}}
\newcommand{\calC}{{\mathcal{C}}}
\newcommand{\calF}{{\mathcal{F}}}
\newcommand{\calG}{{\mathcal{G}}}
\newcommand{\calJ}{{\mathcal{J}}}
\newcommand{\calL}{{\mathcal{L}}}
\newcommand{\calV}{{\mathcal{V}}}
\newcommand{\calW}{{\mathcal{W}}}
\newcommand{\calX}{{\mathcal{X}}}
\newcommand{\calY}{{\mathcal{Y}}}
\newcommand{\scrK}{{\mathscr{K}}}
\newcommand{\fraku}{{\mathfrak{u}}}
\newcommand{\rd}{\,\mathrm{d}} 
\definecolor{DarkBlue}{rgb}{0,0.08,0.45}
\definecolor{DarkRed}{rgb}{.65,0,0}
\definecolor{applegreen}{rgb}{0.55, 0.71, 0.0}
\newcounter{mymac@matlab}
\newcommand{\matlab}{MATLAB%
   \ifnum\value{mymac@matlab}<1%
   \textregistered%
   \setcounter{mymac@matlab}{1}%
   \fi%
  }
\newtheorem{lemma}{Lemma}
\newtheorem{theorem}{Theorem}
\newtheorem{proposition}{Proposition}
\newtheorem{remark}{Remark}
\theoremstyle{definition}
\newtheorem{definition}{Definition}
\numberwithin{equation}{section}
\numberwithin{definition}{section}
\numberwithin{theorem}{section}
\numberwithin{lemma}{section}
\numberwithin{proposition}{section}
\numberwithin{corollary}{section}
\begin{document}
\title{Quasi-Monte Carlo integration for feedback control under uncertainty}

\author{Philipp A. Guth\thanks{Johann Radon Institute for Computational and Applied Mathematics,
  \"OAW, Altenbergerstrasse~69, 4040~Linz, Austria.
  (\texttt{philipp.guth@ricam.oeaw.ac.at}, \texttt{peter.kritzer@oeaw.ac.at}).}, \
Peter Kritzer\footnotemark[1], \
Karl Kunisch\footnotemark[1]\ \thanks{Institute of Mathematics and Scientific Computing, Karl-Franzens University Graz,	     	
Heinrichstrasse~36, 8010 Graz. 
(\texttt{karl.kunisch@uni-graz.at})}}

\date{\today}

\maketitle
\begin{abstract}
\noindent A control in feedback form is derived for linear quadratic, time-invariant optimal control problems subject to parabolic partial differential equations with coefficients depending on a countably infinite number of uncertain parameters. It is shown that the Riccati-based feedback operator depends analytically on the parameters provided that the system operator depends analytically on the parameters, as is the case, for instance, in diffusion problems when the diffusion coefficient is parameterized by a Karhunen--Lo\`eve expansion. These novel parametric regularity results allow the application of quasi-Monte Carlo (QMC) methods to efficiently compute an a-priori chosen feedback law based on the expected value. Moreover, under moderate assumptions on the input random field, QMC methods achieve superior error rates compared to ordinary Monte Carlo methods, independently of the stochastic dimension of the problem. Indeed, our paper for the first time studies Banach-space-valued integration by higher-order QMC methods.
\end{abstract}

\noindent\textbf{Keywords:} 
Feedback control uncertainty, quasi-Monte Carlo method, parametric linear quadratic optimal control, partial differential equations with random coefficients.

\noindent \textbf{2010 MSC:} 35R60, 49N10, 65D30, 65D32, 93B52

\section{Introduction}

The design of regulators for controlled dynamical systems is a crucial task across various disciplines. While the governing equations are known in many real-world scenarios, the precise values of certain coefficients within these equations often elude our grasp due to measurement errors, material imperfections, or inherent randomness. A careful analysis of the uncertainty is thus indispensable in order to ensure that the control objective can be achieved.

In the last decade there has been a lot of research on open-loop control problems under uncertainty~\cite{azmi2023analysis,guth2022parabolic,kunoth2013analytic,martinez2016robust}. In these works controls are developed that are optimal with respect to given performance measures, which allow for different levels of risk aversion \cite{KouriShapiro2018,RuszczynskiShapiro}. The performance measures typically involve high-dimensional integrals over the space of uncertain parameters, resulting in computationally challenging problems. Strategies to reduce the computational burden include, for instance, (multilevel) Monte Carlo methods~\cite{Milz2023, NobileVanzan24,VanBarelVandewalle}, (multilevel) quasi-Monte Carlo methods~\cite{guth2022parabolic,Guth2023,LongoSchwabStein}, sparse grids~\cite{BorzivWinckel09,KouriSurowiecCVAR}, and variants of the stochastic gradient descent algorithm~\cite{GeiersbachWollner20,MatthieuKrumscheidNobile21}. We point out that quasi-Monte Carlo methods are particularly well-suited, since they retain the convexity structure of the optimal control problem while achieving faster convergence rates as compared to Monte Carlo methods. 

Control strategies that are directly applicable to different problem configurations, such as varying initial conditions, are highly desirable---especially in the presence of uncertainty. In contrast to open-loop controllers, closed-loop (or feedback) controllers can be constructed independently of the initial condition; hence, this is one reason why they are favourable in the presence of uncertainty. In ~\cite{GuthKunRod23} a robust feedback controller has been developed for stabilization problems with uncertain parameters, and has been applied to tracking problems under uncertainty in~\cite{guthKunRod24}. Adaptive choices of feedback controls for systems with uncertain parameters have been developed in~\cite{GuthKunRod23_2, kramer2017feedback}.

In this manuscript, parameter-dependent tracking problems over a finite time-horizon~$T>0$ for linear autonomous control systems in the form
\begin{align}\label{eq:psys}
\dot{y}_{\bssigma} &= \calA_{\bssigma} y_{\bssigma}+ B u_{\bssigma} + f, \qquad y_{\bssigma}(0) = y_{\circ},
\end{align}
are investigated with state~$y_{\bssigma}(t)\in H$ for every~$\bssigma \in \fkS \subset \bbR^\bbN$, for time~$t\in[0,T]$, and~$\dot y:=\frac{\rmd}{\rmd t}y$. The state space~$H$ is a separable Hilbert space and identified with its own dual. Moreover, the space~$V$ is another separable Hilbert space which is continuously and densely embedded in~$H$, leading to the Gelfand triplet~$V\subset H\subset V'$. The time evolution of the state is described by~$\calA_{\bssigma} \in\calL(V,V')$ depending on a (possibly infinite) sequence of uncertain parameters~$\bssigma \in \fkS$. The control operator~$B\in  \calL(U,H)$ is a bounded linear operator and the control space~$ U$ is a finite-dimensional separable Hilbert space.
The initial condition~$y_\circ \in  H$ and the external forcing~$f \in L^2(0,T;V')$ are given and the choice of the control input~$u \in L^2(0,T; U)$ is  at our disposal.

More precisely, we aim at steering the state~$y_{\bssigma}$ as close as possible to targets~$g \in L^2(0,T;H)$, $g_T \in H$, using a control in feedback form. 
If the exact value of~$\bssigma$ was known, one could follow a classical strategy by considering the minimization of energy-like functionals as
\begin{equation}\label{eq:J1}
\calJ(y_{\bssigma},u_{\bssigma}) = \frac12 \int_0^T \left( \|Q(y_{\bssigma}(t)- g(t))\|^2_{H} +  \|u_{\bssigma}(t)\|^2_{ U} \right) \mathrm dt + \frac{1}{2} \|P(y_{\bssigma}(T)- g_T)\|_{H}^2,
\end{equation}
subject to~\eqref{eq:psys}, for a given observation operator~$Q \in \calL(H)$ and~$P \in \calL(H)$. In this way one obtains an optimal feedback control input~$u_{\bssigma}(t)=K (t,y_{\bssigma}(t))$, with the input feedback operator~$K=K_{\bssigma}$ depending on the parameter~$\bssigma$, arriving at the optimal closed-loop system
\begin{align}
\dot{y}_{\bssigma}(t) &= \calA_{\bssigma} y_{\bssigma}(t)+ BK_{\bssigma} (t,y_{\bssigma}(t)) + f(t), \qquad y_{\bssigma}(0) = y_{\circ}.\notag
\end{align}

If the value of~$\bssigma$ is unknown, one could find an estimate $\overline\bssigma$ for~$\bssigma$. Applying the feedback~$K_{\overline\bssigma}$ designed for the estimate~$\overline\bssigma$, leads to
\begin{align}
\dot{y}_{\bssigma}(t) &= \calA_{\bssigma} y_{\bssigma}(t)+ BK_{\overline\bssigma} (t,y_{\bssigma}(t)) + f(t) \notag\\
&=\calA_{\overline\bssigma} y_{\bssigma}(t)+ BK_{\overline\bssigma} (t,y_{\bssigma}(t))+ f(t)+(\calA_{\bssigma}-\calA_{\overline\bssigma}) y_{\bssigma}(t)\notag.
\end{align}
One may hope that the feedback~$K_{\overline{\bssigma}}$ will have good tracking properties provided that the estimate~$\overline\bssigma$ for~$\bssigma$ is good, in the sense that~$\calA_{\bssigma}-\calA_{\overline\bssigma}$ is small.

However, finding a good estimate can be time consuming, computationally expensive or even impossible in applications. Thus, we propose an input control operator~$K=K_\fkS$, which is based on the expectation with respect to the parameter~$\bssigma \in \fkS$. As a consequence the feedback is independent of a particular realization of the parameter~$\bssigma$ and can be computed a-priori. Moreover, we investigate quasi-Monte Carlo (QMC) approximations of the expected value of the optimal feedback operator:
\begin{align}\label{eq:FBgoal}
    K_\fkS = \int_{\fkS} K(\bssigma) \, \mu(\mathrm d\bssigma) \approx \int_{\fkS_s} K((\bssigma_s,\bszero))\, \mu_s(\mathrm d\bssigma_s) \approx \frac{1}{n} \sum_{k=0}^{n-1} K((\bssigma^{(k)},\bszero)),
\end{align}
where~$\mu$ and~$\mu_s$ are suitable product probability measures over~$\fkS$ and~$\fkS_s \subset \bbR^s$, respectively. The infinite-dimensional integral over~$\fkS$ is first approximated by an~$s$-dimensional integral over~$\fkS_s \subset \bbR^s$, introducing the~\emph{dimension truncation error}. Here, the problem is truncated by setting all components of~$\bssigma$ with index larger than~$s$ to zero, i.e.,~$(\bssigma_s,\bszero):=(\sigma_1,\ldots,\sigma_s,0,0,\ldots)$. The~$s$-dimensional integral is then approximated by an~$n$-point cubature rule with carefully chosen QMC integration points~$\bssigma^{(0)},\ldots,\bssigma^{(n-1)} \in \fkS_s$. QMC rules 
are equal-weight quadrature rules that are frequently used 
for approximating multi-variate integrals on bounded domains. In this context, one tries to make a smart choice of the integration nodes in order to obtain sufficiently fast error convergence for integrands with suitable properties; two prominent classes of integration node sets are lattice point sets and polynomial lattice point sets, which we will both use in this paper. For general introductions to the field, we refer to, e.g., \cite{dick2022lattices, dick2013qmc, dick2010nets, nied1992siam, sloan1994lattices}.

The key contributions of this paper are:
\begin{itemize}
    \item The parametric regularity analysis of the feedback law~$K(\bssigma) = K_{\bssigma}$, consisting of the solution of a differential Riccati equation~$\Pi_{\bssigma}$ associated to the optimal control problem and the solution~$h_{\bssigma}$ of an additional differential equation in the nonhomogeneous case.
    \item Convergence rates for the approximation~\eqref{eq:FBgoal} that are superior to ordinary Monte Carlo rates for sufficiently smooth input randomness, independently of the dimension. As we will show, the integrands considered in this paper are infinitely smooth so that we can obtain error convergence rates of arbitrarily high polynomial order when using suitable integration methods. This manuscript is the first study of higher-order QMC methods for the approximations of integrals with integrands taking values in separable Banach spaces.
    \item The propagation of the total approximation error to control and state trajectories and the quantification of the suboptimality of the proposed feedback~\eqref{eq:FBgoal}.
\end{itemize}

 \subsection{Contents}\label{sS:cont_not}

The manuscript is structured as follows. In Sect.~\ref{sec:paramProb} we state our working assumptions which ensure that the optimality conditions of the parameterized linear quadratic optimal control problem can be formulated as a parameterized family of linear operator equations. In Sect.~\ref{sec:regularity}, under parametric regularity assumptions on~$\calA_\bssigma$, we show that this parameterized family of linear operators falls into the class of~$p$-analytic linear operator equations, which directly provides us with parametric regularity results for the optimal state and adjoint state. In Sect.~\ref{sec:analyticFB} we show that the optimal feedback operator inherits essentially the same parametric regularity as the optimal state and adjoint state, which is then used in the quasi-Monte Carlo error analysis in Sect.~\ref{sec:QMC}. We investigate how the approximation error of the feedback operator propagates into the controls and state trajectories in Sect.~\ref{sec:errorprop}.

\subsection{Notation}
Given real numbers~$r<s$ and separable Banach spaces~$\calX$ and~$\calY$, the space of continuous functions from~$[r,s]$ into~$\calX$ is denoted by~$\calC([r,s];\calX)$ and the Bochner space of strongly measurable square integrable functions from the interval~$(r,s)$ into~$\calX$ is denoted by~$L^2(r,s;\calX)$ and we also denote the subspace~$W(r,s;\calX,\calY):= \{v \in L^2(r,s;\calX)\,\mid\,\dot{v} \in L^2(r,s;\calY)\}$.
Since the time horizon~$T>0$ will be fixed throughout this manuscript, to shorten the exposition, sometimes we shall denote
\begin{equation*}
{\calX}_T := L^2( 0,T;\calX)\quad\mbox{and}\quad W_T(\calX,\calY):= W(0,T; \calX,\calY).
\end{equation*}

By~$\calL(\calX,\calY)$ we denote the space of linear continuous mappings from~$\calX$ into~$\calY$, and in case~$\calX=\calY$ we use the shorter~$\calL(\calX):=\calL(\calX,\calX)$.

Throughout this manuscript, boldfaced symbols are used to denote multi-indices while the subscript notation~$m_j$ is used to refer to the~$j^{\rm th}$ component of a multi-index~$\boldsymbol{m}$. Further, let 
\begin{align*}
    \calF := \{ \bsm \in \bbN_0^\bbN \mid |\bsm| < \infty \}
\end{align*}
denote the set of finitely supported multi-indices, where the order of a multi-index~$\bsm$ is defined as  
$   |\bsm| := \sum_{j\ge 1} m_j$. 
For a sequence~$\bssigma := (\sigma_j)_{j=1}^\infty$ of real numbers and~$\bsm \in \mathcal{F}$, we define
\begin{align*}
        \partial^\bsm_{\bssigma} := \frac{\partial^{m_1}}{\partial\sigma_1} \frac{\partial^{m_2}}{\partial\sigma_2} \cdots, \qquad \text{and} \qquad \bssigma^\bsm := \prod_{j=1}^{\infty} \sigma_j^{m_j},
\end{align*}
where we follow the convention~$0^0 := 1$. Further, we write~$\delta_{\bsm,\boldsymbol{0}} =1$ if~$m_j = 0$ $\forall j \ge1$ and~$\delta_{\bsm,\boldsymbol{0}} = 0$ otherwise.

For positive integers $\ell,m$ with $\ell\le m$ we write $\{\ell:m\}$ to denote the set 
$\{\ell,\ell+1,\ldots,m\}$. Furthermore, for $\fraku\subseteq \{1:s\}$ and a point $\bsx\in [-1,1]^s$, we write $\bsx_{\fraku}$ to denote the projection of $\bsx$ onto those components corresponding to the indices in $\fraku$. For $\bsx=(x_1,\ldots,x_s)$ and $\bsy=(y_1,\ldots,y_s)\in [-1,1]^s$, we write $(\bsx_\fraku \colon \bsy_{\{1:s\}\setminus \fraku})$ to denote the point $\bsz=(z_1,\ldots,z_s)$, where 
\[
 z_j=\begin{cases}
     x_j&\mbox{if $j\in\fraku$,}\\
     y_j&\mbox{otherwise.}
 \end{cases}
\]

In order to indicate that an object~$\mathbb{G}$ depends on a parameter sequence~$\bssigma \in \fkS$, we use both notations~$\mathbb{G}(\bssigma)$ and~$\mathbb{G}_{\bssigma}$ (even for the exact same object) throughout the manuscript.

\section{Optimality conditions of the parameterized linear-quadratic control problem}\label{sec:paramProb}
We assume that the parameter-dependent operator~$\calA_{\bssigma}$ in~\eqref{eq:psys} can be associated with a continuous and~$V$-$H$-coercive parameter-dependent bilinear form~$a(\bssigma;\cdot,\cdot)$, that is
\begin{align}\label{A1}
    \langle \calA_{\bssigma} v,w\rangle_{V',V} := -a(\bssigma;v,w),\qquad \forall v,w \in V,\quad \forall \bssigma \in \fkS,
\end{align}
where~$\exists (\rho,\theta) \in \bbR \times [0,\infty)$ such that
\begin{align}\label{A2}
    a(\bssigma;v,v) + \rho \|v\|_H^2 \ge \theta \|v\|_V^2\quad \forall v \in V\quad {\rm and}\quad \forall \bssigma \in \fkS .
\end{align}
In addition we assume that the operators have a uniform upper bound, that is
\begin{align}\label{A3}
    \|\calA_{\bssigma}\|_{\calL(V,V')}\le C_\calA\quad \forall \bssigma \in \fkS.
\end{align}
Then, we define the parameterized family of parabolic evolution operators~$A_{\bssigma} := A(\bssigma) \in \calL(W_T(V,V'),V'_T \times H)$ as
\begin{align}\label{eq:defpara}
    \langle A_{\bssigma} w, (v_1,v_2) \rangle_{V'_T \times H,V_T \times H} := \langle \dot w, v_1 \rangle_{V_T',V_T} - \langle \calA_{\bssigma} w, v_1 \rangle_{V_T',V_T} + \langle w(0), v_2 \rangle_H
\end{align}
for all~$w\in W_T(V,V')$ and all~$v=(v_1,v_2) \in V_T \times H.$
Using the parabolic operator~$A_{\bssigma}$, the necessary and sufficient optimality conditions for~\eqref{eq:J1} subject to~\eqref{eq:psys} can be written as
\begin{subequations}
    \begin{align}
     A(\bssigma) y(\bssigma) &= \begin{pmatrix}
         B u(\bssigma) + f \\ y_\circ 
    \end{pmatrix} \label{eq:state} \\
    A(\bssigma)^\ast \begin{pmatrix} q_1(\bssigma) \\ q_2(\bssigma) 
    \end{pmatrix}  &= - Q^\ast Q( y(\bssigma) - g) - E_T^\ast P^\ast P (E_Ty(\bssigma) - g_T)\label{eq:adjoint}\\
    u(\bssigma) &=  B^\ast q_1(\bssigma),\label{eq:gradient}
\end{align}
\end{subequations}
where for~$t \in [0,T]$ the operator~$E_t: W_T(V,V') \to H$ evaluates a function at time~$t$. 
Note that~\eqref{eq:state} is an equivalent formulation to~\eqref{eq:psys}. Further, the adjoint equation~\eqref{eq:adjoint}, for every~$\bssigma \in \fkS$, is understood in a weak sense as 
\begin{align*}
     &\langle w, -\dot{q_1}(\bssigma) - \calA_{\bssigma}^\ast q_1(\bssigma) \rangle_{V_T,V_T'} + \langle w(T,\cdot), q_1(\bssigma;T,\cdot) \rangle_H \notag\\
     &\quad- \langle w(0,\cdot), q_1(\bssigma;0,\cdot) \rangle_H + \langle w(0,\cdot), q_2(\bssigma) \rangle_H \\ 
     &= - \langle w, Q^\ast Q (y(\bssigma) - g) \rangle_{H_T} - \langle w(T,\cdot), P^\ast P (y_T(\bssigma) - g_T)\rangle_H 
\end{align*}
for all~$w \in W_T(V,V')$. In particular, testing with~$w \in W_T(V,V')$ satisfying~$w(T,\cdot) = w(0,\cdot) = 0$, leads to~$ -\dot{q_1}(\bssigma) - \calA_{\bssigma}^\ast q_1(\bssigma) = -Q^\ast Q(y(\bssigma) - g)$, and thus~$q_1(\bssigma) \in W_T(V,V')$ for every~$\bssigma\in \fkS$. Then, testing with~$w \in W_T(V,V')$ satisfying~$w(T,\cdot) = 0$, leads to~$q_1(\bssigma;0,\cdot) = q_2(\bssigma)$. Finally, testing with arbitrary~$w \in W_T(V,V')$ leads to~$q_1(\bssigma;T,\cdot) = -P^\ast P(y_T(\bssigma) - g_T)$.

Using the additional regularity of the adjoint variable~$q_1(\bssigma)\in W_T(V,V')$ the optimal state-adjoint-pair~$(y(\bssigma),q_1(\bssigma)) \in W_T(V,V') \times W_T(V,V')$ can be found by solving
\begin{align}\label{eq:Geqn}
    G(\bssigma) 
    \begin{pmatrix}
        y(\bssigma) \\ q_1(\bssigma)
    \end{pmatrix}
    = 
    \begin{bmatrix} f \\y_\circ \\ Q^\ast Q g \\ P^\ast P g_T\end{bmatrix} \quad \in V'_T \times H \times H_T \times H,
\end{align}
where, for every~$\bssigma \in \fkS$, the operator~$G(\bssigma) \in \calL(W_T(V,V')\times W_T(V,V'), V'_T \times H \times V'_T \times H)$ is defined as
\begin{align}\label{eq:saddlepoint}
    G(\bssigma) = \begin{bmatrix} 
    \tfrac{\rm d}{\rm dt} - \calA_{\bssigma} & -BB^\ast \\
    E_0 & 0 \\
    Q^\ast Q & -\tfrac{\rm d}{\rm dt} - \calA^\ast_{\bssigma}\\
    P^\ast P E_T & E_T
    \end{bmatrix}.
\end{align}

In the next section we specify the parametric dependence of the dynamics and then investigate the parametric regularity of several quantities of interest related to the optimal control problem.

\section{Parametric regularity analysis}\label{sec:regularity}

In order to derive parametric regularity results for the feedback law~$K_{\bssigma} = K(\bssigma)$ in Sect.~\ref{sec:analyticFB}, we rely on the concept of~$p$-analytic operators. We shall verify that~$G_{\bssigma}$ is a~$p$-analytic family of operators, utilizing a parametric regularity assumption on~$\calA_{\bssigma}$. To this end, let
\begin{align*}
    \bssigma := (\sigma_1,\sigma_2,\ldots,\sigma_s,\ldots) \in \fkS := \left[-1/2,1/2\right]^\bbN,
\end{align*}
which we equip with the uniform product probability measure
\begin{align*}
    \mu(\mathrm d\bssigma) = \mathrm d\bssigma =  \bigotimes_{j=1}^{\infty} \mathrm d\sigma_j.
\end{align*}

\begin{definition}
Let~$X$ and~$Y$ be two separable Hilbert spaces. A family of bounded linear operators~$\{\mathbb{G}(\bssigma) \in \calL(X,Y') \mid \bssigma \in \fkS\}$ is called~$p$-analytic, for~$p\in (0,1]$, if the following conditions hold:
\begin{itemize}
    \item[(i)] The family of operators~$\mathbb{G}(\bssigma)$ has a uniformly bounded inverse, i.e., there exists a constant~$C_1>0$ such that
    \begin{align}\label{eq:boundedinv}
        \sup_{\bssigma \in \fkS} \|\mathbb{G}(\bssigma)^{-1}\|_{\calL(Y',X)} \leq C_1.
    \end{align}
    \item[(ii)] There exists a nonnegative sequence~$\tilde\bsb = (\tilde b_j)_{j\in \bbN} \in \ell^p(\bbN)$ such that for all~$\bsnu \in \calF\setminus\{\boldsymbol{0}\}$ it holds that
    \begin{align}\label{eq:analytic}
        \sup_{\bssigma \in \fkS} \|\mathbb{G}(\boldsymbol{0})^{-1} (\partial^\bsnu_{\bssigma} \mathbb{G}(\bssigma))\|_{\calL(X,X)} \leq C_1 \tilde\bsb^\bsnu.
    \end{align}
\end{itemize}
\end{definition}

Solutions of linear operator equations are analytic functions of the parameters~$\bssigma$, if the operator is $p$-analytic. This fact is made precise in the following theorem.
\begin{theorem}[{\cite[Thm.~4]{kunoth2013analytic}}]\label{thm:solanalytic}
    Let~$\mathbb{G}(\bssigma)$ be a~$p$-analytic family of operators. Then, for every~$f \in Y'$ and every~$\bssigma \in \fkS$, there is a unique solution~$y(\bssigma)\in X$ of the parameterized operator equation
    \begin{align*}
        \mathbb{G}(\bssigma) y(\bssigma) = f \qquad \text{in} \quad Y'.
    \end{align*}
    Moreover, the parametric solution~$y(\bssigma)$ depends analytically on the parameters, with
    \begin{align*}
        \sup_{\bssigma \in \fkS} \|(\partial^\bsnu_{\bssigma} y)(\bssigma)\|_X \le C_1 \|f\|_{Y'} |\bsnu|! \bsb^\bsnu 
    \end{align*}
    for all~$\bsnu \in \calF$, where~$b_j = \tilde{b}_j/\ln{2}$.
\end{theorem}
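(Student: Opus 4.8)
The plan is to obtain existence and uniqueness directly from the uniform invertibility~\eqref{eq:boundedinv}, and then to control every parametric derivative by differentiating the defining equation with the Leibniz rule, isolating the undifferentiated operator, inverting, and running an induction on the order~$|\bsnu|$. The inductive step will rest on a single sharp combinatorial estimate, and this estimate is exactly what forces the rescaling~$b_j=\tilde b_j/\ln 2$.

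Existence, uniqueness and the base case are immediate. By~\eqref{eq:boundedinv} the operator~$\mathbb G(\bssigma)$ is boundedly invertible for every~$\bssigma\in\fkS$, so~$y(\bssigma):=\mathbb G(\bssigma)^{-1}f$ is the unique solution in~$X$, and~$\sup_{\bssigma}\|y(\bssigma)\|_X\le C_1\|f\|_{Y'}$. Since~$|\boldsymbol 0|!\,\bsb^{\boldsymbol 0}=1$, this is precisely the claimed bound for~$\bsnu=\boldsymbol 0$ and serves as the anchor of the induction.

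For~$\bsnu\neq\boldsymbol 0$ I would apply~$\partial^\bsnu_\bssigma$ to the identity~$\mathbb G(\bssigma)y(\bssigma)=f$. As~$f$ is parameter-independent, the Leibniz rule for multi-indices gives~$\sum_{\bsm\le\bsnu}\binom{\bsnu}{\bsm}(\partial^\bsm_\bssigma\mathbb G)(\partial^{\bsnu-\bsm}_\bssigma y)=0$; separating the term~$\bsm=\boldsymbol 0$ and left-multiplying by~$\mathbb G(\bssigma)^{-1}$ yields
\begin{equation*}
\partial^\bsnu_\bssigma y(\bssigma) = -\sum_{\boldsymbol 0\neq\bsm\le\bsnu}\binom{\bsnu}{\bsm}\bigl(\mathbb G(\bssigma)^{-1}\partial^\bsm_\bssigma\mathbb G(\bssigma)\bigr)\bigl(\partial^{\bsnu-\bsm}_\bssigma y(\bssigma)\bigr).
\end{equation*}
Invoking the analyticity hypothesis~\eqref{eq:analytic} to bound each~$\|\mathbb G(\bssigma)^{-1}\partial^\bsm_\bssigma\mathbb G(\bssigma)\|_{\calL(X,X)}$ uniformly by~$C_1\tilde\bsb^\bsm$, and writing~$\tau_\bsnu:=\sup_\bssigma\|\partial^\bsnu_\bssigma y(\bssigma)\|_X$, I obtain the scalar recursion~$\tau_\bsnu\le C_1\sum_{\boldsymbol 0\neq\bsm\le\bsnu}\binom{\bsnu}{\bsm}\tilde\bsb^\bsm\,\tau_{\bsnu-\bsm}$.

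Feeding in the inductive ansatz~$\tau_{\bsnu-\bsm}\le C_1\|f\|_{Y'}|\bsnu-\bsm|!\,\bsb^{\bsnu-\bsm}$ and inserting~$\tilde\bsb=(\ln 2)\bsb$, all powers of~$\bsb$ combine to~$\bsb^\bsnu$ and the inductive step reduces (up to tracking the constant from~\eqref{eq:boundedinv}, which must enter only through the base case) to the parameter-free inequality
\begin{equation*}
\sum_{\boldsymbol 0\neq\bsm\le\bsnu}\binom{\bsnu}{\bsm}(\ln 2)^{|\bsm|}\,|\bsnu-\bsm|! \;=\; |\bsnu|!\sum_{k=1}^{|\bsnu|}\frac{(\ln 2)^k}{k!}\;\le\;|\bsnu|!\,\bigl(\ex^{\ln 2}-1\bigr)=|\bsnu|! ,
\end{equation*}
where the middle equality follows by grouping the multi-indices according to~$k=|\bsm|$ and using~$\sum_{|\bsm|=k,\,\bsm\le\bsnu}\binom{\bsnu}{\bsm}=\binom{|\bsnu|}{k}$ together with~$\binom{|\bsnu|}{k}(|\bsnu|-k)!=|\bsnu|!/k!$. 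This is the heart of the argument: the scaling~$b_j=\tilde b_j/\ln 2$ is dictated by nothing other than the identity~$\ex^{\ln 2}-1=1$, which makes the tail series sum to exactly~$1$ and thereby closes the induction with the very constant~$C_1\|f\|_{Y'}$ of the base case. The main obstacle is therefore not analytic but organisational: one must set up the recursion so that the invertibility constant is consumed once while the relative-derivative bounds drive the summation, and then verify that the resulting combinatorial series is summable with constant~$1$---which is precisely what the choice of~$\ln 2$ guarantees.
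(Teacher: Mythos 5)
You should note first that the paper itself does not prove Theorem~\ref{thm:solanalytic}: it is quoted from \cite[Thm.~4]{kunoth2013analytic}, so the only comparison available is with the standard argument of that reference, which is exactly the scheme you follow --- existence, uniqueness, and the base case from the uniform invertibility \eqref{eq:boundedinv}, a Leibniz recursion obtained by differentiating $\mathbb{G}(\bssigma)y(\bssigma)=f$, and an induction on $|\bsnu|$ closed via the Vandermonde regrouping and the identity $\sum_{k\ge 1}(\ln 2)^k/k!=\ex^{\ln 2}-1=1$. Your combinatorial estimate is verified correctly, and structurally your proof is the intended one.

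There is, however, a genuine constant-tracking gap. You invoke \eqref{eq:analytic} as a bound on $\|\mathbb{G}(\bssigma)^{-1}\partial^{\bsm}_{\bssigma}\mathbb{G}(\bssigma)\|_{\calL(X,X)}$, but \eqref{eq:analytic} bounds $\|\mathbb{G}(\boldsymbol{0})^{-1}\partial^{\bsm}_{\bssigma}\mathbb{G}(\bssigma)\|_{\calL(X,X)}$ --- the inverse is frozen at the nominal parameter $\boldsymbol{0}$, while your recursion needs it at $\bssigma$. Bridging the two through $\mathbb{G}(\bssigma)^{-1}\partial^{\bsm}_{\bssigma}\mathbb{G}(\bssigma)=\bigl(\mathbb{G}(\bssigma)^{-1}\mathbb{G}(\boldsymbol{0})\bigr)\bigl(\mathbb{G}(\boldsymbol{0})^{-1}\partial^{\bsm}_{\bssigma}\mathbb{G}(\bssigma)\bigr)$ costs a factor $\|\mathbb{G}(\bssigma)^{-1}\mathbb{G}(\boldsymbol{0})\|_{\calL(X,X)}\le C_1\|\mathbb{G}(\boldsymbol{0})\|_{\calL(X,Y')}$, which the hypotheses do not force to equal $1$. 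Moreover, even granting your reading of the hypothesis, your bookkeeping does not close the induction with the claimed constants: the recursion constant is $C_1\tilde\bsb^{\bsm}$, so the inductive step produces $C_1\cdot C_1\|f\|_{Y'}|\bsnu|!\,\bsb^{\bsnu}$, i.e.\ the factor $C_1$ compounds across levels rather than ``entering only through the base case'' as you assert; already at order $|\bsnu|=1$ one gets $C_1\tilde b_j\cdot C_1\|f\|_{Y'}=C_1^2\|f\|_{Y'}(\ln 2)\,b_j$, which exceeds the claimed $C_1\|f\|_{Y'}\,b_j$ unless $C_1\le 1/\ln 2$. The induction closes exactly as you describe only if the relative-derivative bound holds with constant $1$; with $C_1$ present one must enlarge the sequence, e.g.\ to $b_j=C_1\tilde b_j/\ln 2$. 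To be fair, this mismatch is partly inherited from the paper's own rendering of the definition: in its application (Theorem~\ref{thm:s_saddle_op}) the verification actually bounds $\|\mathbb{G}(\bssigma')^{-1}\partial^{\bsnu}_{\bssigma}\mathbb{G}(\bssigma)\|$ for \emph{every} $\bssigma'$ by the same constant, so the version you need is available there; but as a derivation of the abstract statement from the stated hypotheses, your step is a misquote and the constants do not come out as claimed. A final minor point: you apply the Leibniz rule to $\partial^{\bsnu}_{\bssigma}\bigl(\mathbb{G}(\bssigma)y(\bssigma)\bigr)$ without justifying that $\bssigma\mapsto y(\bssigma)$ possesses these derivatives; this requires a short difference-quotient argument (using \eqref{eq:boundedinv} and the differentiability of $\mathbb{G}$), again by induction on $|\bsnu|$.
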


\subsection{Affine parameter dependence}\label{sec:affine}
Consider a family of operators~$\mathbb{G}_{\bssigma}$ which depends on the parameters~$\bssigma \in \fkS = [-1/2,1/2]^\bbN$ in an affine manner. This situation arises, e.g., in diffusion problems where the diffusion coefficients are parameterized in terms of a Karhunen--Lo\`{e}ve expansion~\cite{BabsukaNobileTempone,SchwabTodor}. More precisely, consider a family~$(\mathbb{G}_j)_{j\ge 0}$, where~$\mathbb{G}_j \in \calL(X,Y')$ for each~$j\ge0$ such that~$\mathbb{G}_{\bssigma}$ can be represented as
\begin{align*}
    \mathbb{G}(\bssigma) = \mathbb{G}_{0} + \sum_{j \ge 1} \sigma_j \mathbb{G}_j.
\end{align*}
If~$\mathbb{G}_{0} = \mathbb{G}(\boldsymbol{0})$ satisfies~\eqref{eq:boundedinv} with a constant~$\frac{1}{c_{{0}}}$ and the fluctuations~$\mathbb{G}_j$ are small relative to~$\mathbb{G}_{0}$ in the sense that there exists~$0<\kappa<2$ such that~$\sum_{j\ge 1}\|\mathbb{G}_0^{-1}\mathbb{G}_j\|_{\calL(X)} \le \kappa$, then~$\mathbb{G}_{\bssigma}$ satisfies~\eqref{eq:boundedinv} and~\eqref{eq:analytic} with~$C_1 = \frac{1}{(1- \kappa/2)c_{{0}}}$ and~$b_j = \frac{\|\mathbb{G}_j\|_{\calL(X)}}{(1- \kappa/2)c_{{0}}}$, $j\ge 1$, see~\cite[Coro.~1]{Schwab13}.

\subsection{Uniform bounded invertibility of $A$ and $G$}
In this section, we show that the norms of the inverses of the parameterized family of saddle point operators~$\eqref{eq:saddlepoint}$ are uniformly bounded with respect to~$\bssigma \in \fkS$, i.e., satisfy~\eqref{eq:boundedinv}. To do so, we first derive such a bound for the parabolic evolution operators. 

Similar bounds have been obtained in~\cite{kunoth2013analytic}, where it is shown that saddle point operators of linear quadratic control problems subject to~$p$-analytic parameterized parabolic evolution operators are~$p$-analytic, see~\cite[Thm.~22]{kunoth2013analytic}. 

However, the continuity and inf-sup condition constants derived in~\cite{kunoth2013analytic} involve the constant~$\varrho := \sup_{0\ne w\in W_T(V,V')} \frac{\|w(0,\cdot)\|_H}{\|w\|_{W_T(V,V')}}$ depending on the time horizon~$T$. This embedding constant becomes unbounded as~$T \to 0$, rendering it unsuitable for our subsequent analysis. Instead, we derive bounds that are uniform with respect to~$\bssigma \in \fkS$ and remain bounded as~$T \to 0$. Moreover, our bounds depend continuously and monotonically on the time horizon~$T$. This, in conjunction with the time-invariance of the problem, will enable us later in Sect.~\ref{sec:homogeneous} to establish the analytic dependence of the feedback law at any time~$t \in [0, T]$.

\begin{lemma}\label{lem:apriori_unc}
    Under the assumptions~\eqref{A1},~\eqref{A2}, and~\eqref{A3} the parameterized family of parabolic evolution operators~$A_{\bssigma} \in \calL( W_T(V,V') , V'_T \times H)$, as defined in~\eqref{eq:defpara}, has uniformly bounded inverses
    $$
    \|A_{\bssigma}^{-1}\|_{\calL(V'_T \times H,W_T(V,V'))} \le c_A(T) \qquad \forall \bssigma \in \fkS,
    $$ 
    with~$T\mapsto c_A(T)$ continuous and monotonically increasing and independent of~$\bssigma$.
\end{lemma}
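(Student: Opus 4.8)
The plan is to read off that $A_\bssigma w = (\dot w - \calA_\bssigma w,\, w(0))$ from the definition \eqref{eq:defpara}, so that bounded invertibility of $A_\bssigma$ is exactly the assertion that the abstract parabolic problem $\dot w - \calA_\bssigma w = F$, $w(0)=y_\circ$ is uniquely solvable in $W_T(V,V')$ for every datum $(F,y_\circ)\in V'_T\times H$, with a solution operator bounded by $c_A(T)$. Existence and uniqueness of $w\in W_T(V,V')$ (recall $W_T(V,V')\hookrightarrow \calC([0,T];H)$, so that $w(0)\in H$ is meaningful) follow from the classical well-posedness theory for abstract parabolic equations: the form $a(\bssigma;\cdot,\cdot)$ is bounded by $C_\calA$ through \eqref{A3} and satisfies the G\r{a}rding inequality \eqref{A2}, which are precisely the hypotheses of the Lions existence theorem. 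The substance of the lemma therefore lies in the a priori estimate and, above all, in tracking its dependence on $T$.

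First I would test the equation with the solution itself, using the identity $\langle \dot w(t),w(t)\rangle_{V',V}=\tfrac12\frac{\ed}{\ed t}\|w(t)\|_H^2$ valid for $w\in W_T(V,V')$. Combining this with the coercivity bound $\langle \calA_\bssigma w(t),w(t)\rangle_{V',V}=-a(\bssigma;w(t),w(t))\le -\theta\|w(t)\|_V^2+\rho\|w(t)\|_H^2$ from \eqref{A2}, and absorbing the forcing through Young's inequality $\langle F(t),w(t)\rangle_{V',V}\le \tfrac{1}{2\theta}\|F(t)\|_{V'}^2+\tfrac{\theta}{2}\|w(t)\|_V^2$, yields the differential inequality
\begin{align*}
\frac{\ed}{\ed t}\|w(t)\|_H^2 + \theta\|w(t)\|_V^2 \le 2\rho\|w(t)\|_H^2 + \tfrac1\theta\|F(t)\|_{V'}^2 .
\end{align*}

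From here I would proceed in three steps. Dropping the nonnegative $V$-term and applying Gr\"onwall's inequality bounds $\sup_{t\in[0,T]}\|w(t)\|_H^2$ by $e^{2\rho^+T}\bigl(\|y_\circ\|_H^2+\tfrac1\theta\|F\|_{V'_T}^2\bigr)$, where $\rho^+:=\max(\rho,0)$. Integrating the differential inequality over $[0,T]$ and reinserting this sup-bound then controls $\|w\|_{V_T}^2$ by the same data, again with an $e^{2\rho^+T}$ and a polynomial-in-$T$ prefactor. Finally, reading $\dot w=\calA_\bssigma w+F$ off the equation and invoking the uniform bound \eqref{A3} gives $\|\dot w\|_{V'_T}\le C_\calA\|w\|_{V_T}+\|F\|_{V'_T}$, so that the graph norm $\|w\|_{W_T(V,V')}^2=\|w\|_{V_T}^2+\|\dot w\|_{V'_T}^2$ is dominated by $c_A(T)^2\bigl(\|F\|_{V'_T}^2+\|y_\circ\|_H^2\bigr)$. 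Since every constant that appears is assembled from $\theta$, $\rho$ and $C_\calA$ alone, all of which are uniform in $\bssigma$ by \eqref{A2}--\eqref{A3}, the resulting $c_A(T)$ is independent of $\bssigma$; and since it is built from the increasing continuous factors $e^{2\rho^+T}$ and powers of $T$, it can be majorized by a continuous, monotonically increasing function of $T$ that stays bounded as $T\to0$.

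The main obstacle is precisely this last qualitative requirement, which is the whole point of the remark preceding the lemma. A direct duality or inf-sup argument would introduce the embedding constant $\varrho=\sup_{w\ne0}\|w(0)\|_H/\|w\|_{W_T(V,V')}$, which diverges as $T\to0$; the energy method above sidesteps it entirely, because the initial datum enters only through $\|y_\circ\|_H^2$ with the harmless factor $e^{2\rho^+T}\to1$. The hard part will therefore not be any single estimate but the bookkeeping: keeping all constants explicit through the Young and Gr\"onwall steps so that the final $c_A(T)$ is manifestly continuous, increasing, and finite at $T=0$, rather than merely finite for each fixed $T$.
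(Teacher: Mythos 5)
Your proposal is correct and follows essentially the same route as the paper's own proof in Appendix~\ref{sec:appendix}: test the equation with the solution, use \eqref{A2} and Young's inequality to get the differential inequality $\frac{\ed}{\ed t}\|w\|_H^2+\theta\|w\|_V^2\le 2\rho\|w\|_H^2+\theta^{-1}\|F\|_{V'}^2$, apply Gr\"onwall for the $H$-bound, integrate for the $V_T$-bound, and recover $\|\dot w\|_{V'_T}$ from the equation via \eqref{A3}, assembling a $c_A(T)$ built only from $\theta$, $\rho$, $C_\calA$ that is continuous, increasing, and finite at $T=0$. Your explicit appeal to Lions' existence theorem for surjectivity (which the paper leaves implicit) and the use of $\rho^+=\max(\rho,0)$ are harmless refinements, not a different method.
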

The proof of Lemma~\ref{lem:apriori_unc} follows standard arguments and thus is presented in Appendix~\ref{sec:appendix} for completeness.
Under the conditions of Lemma~\ref{lem:apriori_unc}, we next derive a uniform bound for the inverses of the parameterized family of operators~$G_{\bssigma}$ defined in~\eqref{eq:saddlepoint}.

\begin{theorem}\label{thm:contsaddle}
    Under the conditions of Lemma~\ref{lem:apriori_unc}, the parameterized family of operators~$G_{\bssigma} = G(\bssigma) \in \calL(W_T(V,V')\times W_T(V,V'), V_T'\times H \times V'_T \times H)$, as defined in~\eqref{eq:saddlepoint}, has uniformly bounded inverses
    $$
    \|G_{\bssigma}^{-1}\|_{\calL(V_T'\times H \times V'_T \times H,W_T(V,V')\times W_T(V,V'))} \le c_\calG(T),\qquad \forall \bssigma \in \fkS,
    $$
    with
    $$
    c_\calG(T) =  (\mathfrak{C}_y(T)^2 + \mathfrak{C}_q (T)^2)^{\frac{1}{2}},
    $$
independent of~$\bssigma \in \fkS$, and depending continuously and monotonically on~$T$, where
    \begin{align*}
        &\mathfrak{C}_q (T)^2 := c_A(T)^2 2 \left( c_H^2\left(1+ \|Q^\ast Q\|_{\calL(H)} c_V^2 \mathfrak{C}_y(T)^2\right) + 1 + \|P^\ast P\|_{\calL(H)}^2 \left(1+\theta^{-1} + 2\rho T e^{2\rho T}\right)\right),\\
        &\mathfrak{C}_y (T)^2 := c_A(T)^2 (c_H^2\|B\|^2_{\calL(U,H)} \mathfrak{C}_u(T)^2 + 1),\\
        &\mathfrak{C}_u (T)^2 := 2\left(\|Q\|_{\calL(H)}^2 c_V^2 c_A(T)^2 + \|P\|_{\calL(H)}^2\left( 1+\theta^{-1} + 2\rho Te^{2\rho T}\right) + 4\phi(T)\right)
    \end{align*}
    with~$\mathfrak{C}_1 := 1+\theta^{-1} + 2\rho T e^{2\rho T}$, and~$c_H$ and~$c_V$ denoting the embedding constant~$\|\cdot\|_{V'} \le c_H \|\cdot\|_H$ and ~$\|\cdot\|_{H} \le c_V \|\cdot\|_{V}$, respectively.
\end{theorem}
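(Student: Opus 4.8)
The plan is to bound $\|G_\bssigma^{-1}\|$ by establishing an a priori estimate for the solution $(y,q_1)\in W_T(V,V')\times W_T(V,V')$ of $G(\bssigma)\,(y,q_1)^{\top}=(r_1,r_2,r_3,r_4)$ in terms of the data, uniformly in $\bssigma$. Reading off the blocks of~\eqref{eq:saddlepoint}, the first two rows state that $A_\bssigma y=(r_1+BB^\ast q_1,\,r_2)$, i.e.\ $y$ solves the state equation driven by the control $u:=B^\ast q_1$, while the last two rows encode the backward adjoint equation $-\dot q_1-\calA^\ast_\bssigma q_1=r_3-Q^\ast Q\,y$ with terminal datum $q_1(T)=r_4-P^\ast P\,E_Ty$. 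The quantity to control first is the control energy $\|u\|_{U_T}$; the bounds on $y$ and $q_1$ then follow by invoking Lemma~\ref{lem:apriori_unc} on the state and on the time-reversed adjoint equation.

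First I would derive an energy identity by testing the state equation with $q_1$ and the adjoint equation with $y$. Since both $-\langle \calA_\bssigma y,q_1\rangle_{V_T',V_T}$ and $-\langle \calA^\ast_\bssigma q_1,y\rangle_{V_T',V_T}$ equal $\int_0^T a(\bssigma;y,q_1)\,\rmd t$, the bilinear-form contributions cancel when the tested adjoint equation is subtracted from the tested state equation, and the time-derivative terms combine to $\int_0^T\tfrac{\rmd}{\rmd t}\langle y,q_1\rangle_H\,\rmd t=\langle y(T),q_1(T)\rangle_H-\langle y(0),q_1(0)\rangle_H$. Inserting the boundary conditions $y(0)=r_2$ and $q_1(T)=r_4-P^\ast P\,y(T)$ and rearranging isolates the coercive energy on the left:
\begin{equation*}
\|u\|_{U_T}^2+\|Qy\|_{H_T}^2+\|Py(T)\|_H^2=\langle r_4,y(T)\rangle_H-\langle r_2,q_1(0)\rangle_H-\langle r_1,q_1\rangle_{V_T',V_T}+\langle r_3,y\rangle_{V_T',V_T}.
\end{equation*}

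Next, the right-hand side must be absorbed into the left. From the state equation Lemma~\ref{lem:apriori_unc} gives $\|y\|_{W_T}\le c_A(T)\big(c_H\|B\|_{\calL(U,H)}\|u\|_{U_T}+\|r_1\|_{V_T'}+\|r_2\|_H\big)$, and from the adjoint equation---recognising the backward parabola, via the time reversal $t\mapsto T-t$, as a forward one governed by $\calA^\ast_\bssigma$, which satisfies~\eqref{A2} with the same constants and hence the same $c_A(T)$---one obtains $\|q_1\|_{W_T}\le c_A(T)\big(c_V\|Q\|_{\calL(H)}\|Qy\|_{H_T}+\|r_3\|_{V_T'}+\|r_4-P^\ast P\,y(T)\|_H\big)$, where $\|Q^\ast Qy\|_{V_T'}\le c_V\|Q\|_{\calL(H)}\|Qy\|_{H_T}$ is obtained through the duality pairing. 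The delicate ingredient is the control of the traces $\|y(T)\|_H$ and $\|q_1(0)\|_H$: these must be dominated by the $W_T$-norms with a constant that stays bounded as $T\to0$, which is precisely why the $\varrho$-estimate of~\cite{kunoth2013analytic} is avoided. Instead I would use a Gr\"onwall energy estimate based on~\eqref{A2}---the source of the factor $\mathfrak{C}_1=1+\theta^{-1}+2\rho T e^{2\rho T}$, which remains finite and monotone as $T\to0$---to bound $\sup_t\|y(t)\|_H^2$. Applying Young's inequality to each pairing then lets one absorb the quadratic-in-solution contributions into the coercive left-hand side, yielding $\|u\|_{U_T}^2\le\mathfrak{C}_u(T)^2\,\|(r_1,r_2,r_3,r_4)\|^2$.

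Finally, back-substitution into the state bound gives $\|y\|_{W_T}\le\mathfrak{C}_y(T)\,\|(r_1,r_2,r_3,r_4)\|$, and then the adjoint bound gives $\|q_1\|_{W_T}\le\mathfrak{C}_q(T)\,\|(r_1,r_2,r_3,r_4)\|$, so that $\|(y,q_1)\|_{W_T\times W_T}\le(\mathfrak{C}_y(T)^2+\mathfrak{C}_q(T)^2)^{1/2}\|(r_1,r_2,r_3,r_4)\|=c_\calG(T)\,\|(r_1,r_2,r_3,r_4)\|$, which is the asserted bound on $\|G_\bssigma^{-1}\|$. Because $c_A(T)$ (Lemma~\ref{lem:apriori_unc}) and $\mathfrak{C}_1$ are continuous and monotonically increasing in $T$, and $c_\calG$ is assembled from sums and products of such terms, it inherits continuity and monotonicity, and it is manifestly independent of $\bssigma$. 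I expect the main obstacle to be exactly the trace estimation: keeping the constants multiplying $\|y(T)\|_H$ and $\|q_1(0)\|_H$ finite and monotone as $T\to0$ while still closing the absorption argument is the technical heart that distinguishes this bound from the $T$-blow-up in the existing literature.
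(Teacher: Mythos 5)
Your quantitative core is sound, and it takes a genuinely different route from the paper. The paper never forms your duality identity: it instead interprets the system \eqref{eq:generalop} as the necessary and sufficient optimality conditions \eqref{eq:genOCprob} of an auxiliary LQ problem with cost $J(w_1,u)=\tfrac12\big(\|Qw_1\|^2_{H_T}+\|u\|^2_{U_T}+\|Pw_1(T)\|^2_H\big)-\langle w_1,c\rangle_{V_T,V_T'}-\langle w_1(T),d\rangle_H$, and bounds the control energy by comparing the optimal value with the value at zero control, $J(w_1,u)\le J(\bar w_1,0)$, where $\bar w_1$ is the free trajectory of Lemma~\ref{lem:apriori_unc}. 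Your route replaces this optimization step by testing the state equation with $q_1$ and the adjoint equation with $y$; checking the signs against \eqref{eq:saddlepoint}, the bilinear-form terms do cancel and the time-derivative terms reduce to the boundary pairings, so your identity $\|u\|^2_{U_T}+\|Qy\|^2_{H_T}+\|Py(T)\|^2_H=\langle r_4,y(T)\rangle_H-\langle r_2,q_1(0)\rangle_H-\langle r_1,q_1\rangle_{V_T',V_T}+\langle r_3,y\rangle_{V_T',V_T}$ is correct. The absorption scheme also closes: by Lemma~\ref{lem:apriori_unc} (and its time-reversed version, which applies with the same $c_A(T)$ since $\calA^\ast_{\bssigma}$ inherits \eqref{A2} and \eqref{A3}), every solution-dependent factor on the right is controlled by data plus a multiple of $\|u\|_{U_T}$, $\|Qy\|_{H_T}$ or $\|Py(T)\|_H$, and the trace terms carry the same $\mathfrak{C}_1$-type Gronwall constants as in the paper, so nothing blows up as $T\to 0$. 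One caveat: your Young-absorption will produce constants that are structurally similar to, but not literally equal to, the stated $\mathfrak{C}_u(T),\mathfrak{C}_y(T),\mathfrak{C}_q(T)$; this is harmless for the substance of the theorem (a uniform bound, continuous and monotone in $T$) but does not reproduce $c_\calG(T)$ exactly as displayed.

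The genuine gap is existence, i.e.\ surjectivity of $G_{\bssigma}$. Everything you do is an a priori estimate \emph{for the solution} of $G(\bssigma)(y,q_1)^\top=(r_1,r_2,r_3,r_4)$: this yields injectivity, closedness of the range, and a bound for the inverse on the range, but it does not produce a solution for arbitrary data $(r_1,r_2,r_3,r_4)\in V_T'\times H\times V_T'\times H$, and the theorem asserts that $G_{\bssigma}^{-1}$ exists as an element of $\calL(V_T'\times H\times V_T'\times H,\,W_T(V,V')\times W_T(V,V'))$. This is exactly what the paper's optimization framing delivers: the system is the optimality system of a strictly convex LQ problem, so the direct method in the calculus of variations gives a minimizer (hence a solution of \eqref{eq:generalop}) for every right-hand side, and strict convexity gives uniqueness. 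To complete your proof you must add such a step, for instance by invoking this LQ argument, by a Galerkin construction combined with your own uniform estimate, or by an inf-sup argument (which would additionally require an estimate for the adjoint of $G_{\bssigma}$). Without it, the claim ``has uniformly bounded inverses'' is not established.
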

\begin{proof}
For every~$\bssigma \in \fkS$, we will show that, for arbitrary functions $(a,b,c,d) \in V'_T \times H \times V'_T \times H$, there exists a unique solution~$(w_1,w_2) \in W_T(V,V') \times W_T(V,V')$ of
\begin{align}\label{eq:generalop}
    G(\bssigma) \begin{pmatrix}
    w_1(\bssigma)\\w_2(\bssigma)
\end{pmatrix} = \begin{bmatrix}
    a\\b\\c\\d
\end{bmatrix},
\end{align}
which depends continuously on~$(a,b,c,d)$. First, observe that~\eqref{eq:generalop} can be written as
\begin{subequations}\label{eq:genOCprob}
\begin{align}
\dot w_1(\bssigma) - \calA(\bssigma) w_1(\bssigma) &= a + B u(\bssigma) \label{eq:genOCprob1}\\
w_1(\bssigma;0) &= b\label{eq:genOCprob2}\\
-\dot w_2(\bssigma) - \calA(\bssigma) w_2(\bssigma)  &= -(Q^\ast Q w_1(\bssigma) - c)\label{eq:genOCprob3}\\
w_2(\bssigma;T) &= -(P^\ast P w_1(\bssigma;T)-d)\label{eq:genOCprob4}\\
u(\bssigma) &= B^\ast w_2(\bssigma).
\end{align}
\end{subequations}
Moreover, for every~$\bssigma \in \fkS$, the set of equations~\eqref{eq:genOCprob} defines necessary and sufficient optimality conditions of the linear quadratic optimal control problem~$\min_{w_1(\bssigma),u(\bssigma)} J(w_1(\bssigma),u(\bssigma))$, subject to~\eqref{eq:genOCprob1} and~\eqref{eq:genOCprob2}, where
\begin{align}
J(w_1(\bssigma),u(\bssigma)) &:= \frac12\left( \|Qw_1(\bssigma)\|^2_{H_T} +  \|u(\bssigma)\|^2_{ U_T} + \|P w_1(\bssigma;T)\|_{H}^2 \right)\notag \\
&\quad - \langle w_1(\bssigma), c\rangle_{V_T,V_T'}  - \langle w_1(\bssigma;T),d\rangle_H \notag.
\end{align}
The existence of a minimizer~$(w_1(\bssigma),u(\bssigma))$ of this linear quadratic optimal control problem (and thus the existence of a solution~$(w_1(\bssigma),w_2(\bssigma))$ of~\eqref{eq:generalop}) follows by the direct method in calculus of variations. Furthermore, the minimizer is unique since~$J$ is strictly convex.

Next, we will prove that~$G(\bssigma)^{-1}$ is uniformly bounded with respect to~$\bssigma \in \fkS$. For this purpose, it remains to obtain a uniform bound on~$w_1(\bssigma),w_2(\bssigma)$ in terms of~$(a,b,c,d)$.

To this end, denoting by~$\bar w_1(\bssigma)$ the solution of the uncontrolled system~\eqref{eq:eqn}, we observe that, for every~$\bssigma \in \fkS$, the optimal state-control pair~$(w_1(\bssigma),u(\bssigma))$ satisfies~$J(w_1(\bssigma),u(\bssigma)) \le J(\bar w_1(\bssigma),0)$. Thus,
\begin{align}
    \frac{1}{2} \|u(\bssigma)\|^2_{U_T} &\le \frac12 \left(\|Q\bar w_1(\bssigma)\|^2_{H_T} - \|Q w_1(\bssigma)\|^2_{H_T} + \|P \bar w_1(\bssigma;T)\|_{H}^2 -  \|P w_1(\bssigma;T)\|_{H}^2\right)\notag\\
    &\quad - \langle \bar w_1(\bssigma) - w_1(\bssigma), c\rangle_{H_T} - \langle \bar w_1(\bssigma;T) - w_1(\bssigma;T), d\rangle_{H}\notag \\
    &\le \frac12 \left(\|Q\bar w_1(\bssigma)\|^2_{H_T} + \|P \bar w_1(\bssigma;T)\|_{H}^2 \right)\notag\\
    &\quad+ \frac{\varepsilon}{2} \| \bar w_1(\bssigma) - w_1(\bssigma)\|^2_{H_T} + \frac{1}{2\varepsilon} \| c\|^2_{H_T} + \frac{\varepsilon}{2} \| \bar w_1(\bssigma;T) - w_1(\bssigma;T)\|^2_{H} + \frac{1}{2\varepsilon} \| d\|^2_{H},\notag
\end{align}
for an arbitrary~$\varepsilon>0$ with Young's inequality. Denoting by~$c_V$ the embedding constant~$\|\cdot\|_{H} \le c_V \|\cdot\|_V$, we obtain
\begin{align}
    \|\bar w_1(\bssigma) - w_1(\bssigma) \|_{H_T}^2 \le c_V^2 \|\bar w_1(\bssigma) - w_1(\bssigma)\|_{V_T}^2\notag \le c_V^2 \|\bar w_1(\bssigma) - w_1(\bssigma)\|_{W_T(V,V')}^2.
\end{align}
We note that by the linearity of the state equation,~$\bar w_1(\bssigma) - w_1(\bssigma)$ solves~\eqref{eq:eqn} with~$f = Bu(\bssigma)$ and~$y_\circ = 0$. Thus, from Lemma~\ref{lem:apriori_unc} it follows that
\begin{align}
    \|\bar w_1(\bssigma) - w_1(\bssigma) \|_{H_T}^2 \le c_V^2 c_A(T) \|Bu(\bssigma)\|_{V'_T}^2 \le c_V^2 c_A(T)  c_H^2 \|B\|^2_{\calL(U,H)} \|u(\bssigma)\|^2_{U_T},
\end{align}
where~$c_H$ denotes the embedding constant~$\|\cdot\|_{V'} \le c_H \|\cdot\|_H$. Further, from~\eqref{eq:ex1help2} and~\eqref{eq:yT-bound} below we conclude that
\begin{align}
    \| \bar w_1(\bssigma;T) - w_1(\bssigma;T)\|^2_{H} 
    \le \left(\frac{1}{\theta} + 2\rho T e^{2\rho T} \right) c_H^2 \|B\|^2_{\mathcal{L}(U,H)} \|u(\bssigma)\|^2_{U_T}.\notag
\end{align}
Setting~$\phi(T) := \left(\frac{1}{\theta} +  2\rho T e^{2\rho T} + c_V^2 c_A(T)\right) c_H^2 \|B\|_{\calL(U,H)}^2$, we arrive at
\begin{align}
    \left(\frac{1}{2} - \varepsilon \phi(T)\right) \|u(\bssigma)\|_{U_T}^2 \le \frac12 \left(\|Q\bar w_1(\bssigma)\|^2_{H_T} + \|P \bar w_1(\bssigma;T)\|_{H}^2 +\frac{1}{\varepsilon} \|c\|_{H_T}^2 + \frac{1}{\varepsilon} \|d\|_H^2\right),\notag
\end{align}
and selecting~$\varepsilon = \frac{1}{4 \phi(T)}$ gives
\begin{align}
    \|u(\bssigma)\|_{U_T}^2 &\le 2 \left(\|Q\bar w_1(\bssigma)\|^2_{H_T} + \|P \bar w_1(\bssigma;T)\|_{H}^2 + 4\phi(T) \left(\|c\|_{H_T}^2 +\|d\|_H^2\right)\right) \notag\\
    &\le 2 \left(\|Q\|_{\calL(H)}^2 \|\bar w_1(\bssigma)\|^2_{H_T} + \|P\|^2_{\calL(H)} \|\bar w_1(\bssigma;T)\|_{H}^2 +  4\phi(T) \left(\|c\|_{H_T}^2 +\|d\|_H^2\right)\right).\label{eq:ubound}
\end{align}
Moreover, using \eqref{eq:yT-bound} together with~\eqref{eq:ex1help2} gives
\begin{align}
    \|\bar w_1(\bssigma;T)\|_{H}^2 &\le \|b\|^2_H + 2\rho T e^{2\rho T}(\|b\|_H^2 + \|a\|_{V'_T}^2) + \frac{1}{\theta} \|a\|_{V'_T}^2\notag \\
    &\le (1+\frac{1}{\theta} + 2\rho T e^{2\rho T}) (\|b\|_H^2 + \|a\|_{V'_T}^2) \label{eq:yTest}.
\end{align}
Further, with~$\|\bar w_1(\bssigma)\|_{H_T}^2 \le c_V^2 \|\bar w_1(\bssigma)\|_{V_T}^2 \le c_V^2 \|\bar w_1(\bssigma)\|_{W_T(V,V')}^2$ and Lemma~\ref{lem:apriori_unc}, we have that~$\|\bar w_1(\bssigma)\|_{H_T}^2 \le c_V^2 c_{A}(T)^2 (\|b\|_H^2 + \|a\|_{V'_T}^2)$. Then, together with~\eqref{eq:ubound} and~\eqref{eq:yTest} we conclude that
\begin{align*}
    \|u(\bssigma)\|^2_{U_T} 
    &\le 2\left(\|Q\|_{\calL(H)}^2 c_V^2 c_A(T)^2 + \|P\|_{\calL(H)}^2( 1+\frac{1}{\theta} + 2\rho Te^{2\rho T})\right)(\|b\|_H^2 + \|a\|_{V'_T}^2)\\&\quad+ 8\phi(T) \left(\|c\|_{H_T}^2 +\|d\|_H^2\right),
\end{align*}
and thus
\begin{align}\label{eq:enbound}
    \|u(\bssigma)\|_{U_T}^2 \leq \mathfrak{C}_u(T)^2 (\|a\|_{V'_T}^2 + \|b\|_H^2 + \|c\|_{H_T}^2 + \|d\|_{H}^2),
\end{align}
with~$\mathfrak{C}_u(T)$ as above.

From~\eqref{eq:enbound} and Lemma~\ref{lem:apriori_unc} it follows that
\begin{align}
\|w_1(\bssigma)\|_{W_T(V,V')}^2 &\le c_A(T)^2 (\|Bu(\bssigma) + a\|_{V'_T}^2 + \|b\|_H^2)\notag \\ &\le \mathfrak{C}_y(T)^2(\|a\|_{V'_T} + \|b\|_H^2 + \|c\|_{V'_T}^2 + \|d\|_{H}^2).\notag
\end{align}
for every~$\bssigma \in \fkS$ with~$\mathfrak{C}_y$. 
Analogously to~\eqref{eq:uncapriori} we obtain for the adjoint equation~\eqref{eq:genOCprob3}--\eqref{eq:genOCprob4} the following estimate
\begin{align}
\|w_2(\bssigma)\|_{W_T(V,V')}^2 &\le c_A(T)^2 \left(\|Q^*Q w_1(\bssigma)-c\|_{V'_T}^2 + \|P^* P w_1(\bssigma;T)-d\|_H^2\right)\notag\\
&= \mathfrak{C}_q^2 (\|a\|^2_{V'_T} + \|b\|_H^2 + \|c\|_{V'_T}^2+ \|d\|_{H}^2), \notag
\end{align}
for every~$\bssigma \in \fkS$. We conclude that for every~$\bssigma \in \fkS$ it holds that
\begin{align}
    \|(w_1(\bssigma),w_2(\bssigma))\|_{W_T(V,V')\times W_T(V,V')}^2 &= \|w_1(\bssigma)\|^2_{W_T(V,V')} + \|w_2(\bssigma)\|^2_{W_T(V,V')} \notag\\
    &\le (\mathfrak{C}_y^2 + \mathfrak{C}_q^2) (\|a\|^2_{V'_T} + \|b\|_H^2 + \|c\|_{V'_T}^2+ \|d\|_{H}^2), \notag
\end{align}
which shows the desired result.
\end{proof}

\subsection{Parametric regularity of $A$ and $G$}
In Theorem~\ref{thm:contsaddle} we have shown that~$G_{\bssigma}^{-1}$ is uniformly bounded with respect to~$\bssigma \in \fkS$ by a constant~$c_\calG(T)$ provided that~$A_{\bssigma}^{-1}$ is uniformly bounded by a constant~$c_A(T)$. Similarly, the following result shows that the parametric regularity of~$\calA_{\bssigma}$ determines the parametric regularity of~$G_{\bssigma}$.

\begin{theorem}\label{thm:s_saddle_op}
    Let the assumptions~\eqref{A1},~\eqref{A2}, and~\eqref{A3} hold, and assume that there exists a sequence~$\tilde \bsb \in \ell^p(\bbN)$, for some~$0<p\le1$, of nonnegative numbers, such that~$\|\partial_{\bssigma}^\bsnu \calA_{\bssigma}\|_{\calL(V,V')} \le \tilde \bsb^{\bsnu}$ for all~$\bsnu \in \calF \setminus \{\boldsymbol{0}\}$.
    Then, for every~$\bssigma \in \fkS$, the tracking problem of minimizing~$\calJ(y_{\bssigma},u_{\bssigma})$ subject to~\eqref{eq:psys} over~$(y_{\bssigma},u_{\bssigma})$ can be formulated as an operator equation~\eqref{eq:Geqn}, and the associated operator, as defined in~\eqref{eq:saddlepoint}, is~$p$-analytic with the same regularity parameter~$p$.
    Moreover, from Theorem~\ref{thm:solanalytic} it follows that the state and adjoint state depend analytically on the parameters~$\bssigma \in \fkS$:
    \begin{align}\label{eq:state-adjoint-analytic}
        \left\| \partial^\bsnu_{\bssigma} \begin{pmatrix} y\\q_1 \end{pmatrix} (\bssigma)\right\|_{W_T(V,V')\times W_T(V,V')} \le c_\calG(T) |\bsnu|! \bsb^\bsnu \left\|\begin{pmatrix}f \\ y_\circ \\ Q^\ast Qg \\P^\ast P g_T\end{pmatrix}\right\|_{V'_T \times H \times V'_T \times H},
    \end{align}
    for all~$\bsnu \in \calF$, with~$b_j := \tilde{b}_j/\ln{2}$, and a constant~$c_\calG(T)>0$ depending continuously and monotonically increasing on~$T$ and which is independent of~$\bssigma \in \fkS$. 
\end{theorem}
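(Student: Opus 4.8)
The plan is to verify directly that the family $G_{\bssigma}$ fulfils the two defining properties of a $p$-analytic family (with $X:=W_T(V,V')\times W_T(V,V')$ and $Y':=V'_T\times H\times V'_T\times H$) and then to read off the stated regularity of $(y,q_1)(\bssigma)$ from Theorem~\ref{thm:solanalytic}. Property~(i), the uniform bound $\sup_{\bssigma\in\fkS}\|G_{\bssigma}^{-1}\|_{\calL(Y',X)}\le c_\calG(T)=:C_1$, is already supplied by Theorem~\ref{thm:contsaddle}, so nothing remains to be shown there; in particular $\|G(\boldsymbol0)^{-1}\|_{\calL(Y',X)}\le c_\calG(T)$.

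For property~(ii) the central observation is that in the block form \eqref{eq:saddlepoint} the parameter enters $G(\bssigma)$ \emph{only} through the two entries $-\calA_{\bssigma}$ and $-\calA^\ast_{\bssigma}$, and it does so affinely; all other blocks ($\tfrac{\rm d}{\rm dt}$, $-BB^\ast$, $E_0$, $Q^\ast Q$, $P^\ast P E_T$, $E_T$) are independent of $\bssigma$. Hence, for every $\bsnu\in\calF\setminus\{\boldsymbol0\}$, the mixed derivative $\partial^\bsnu_{\bssigma}G(\bssigma)$ is again a block operator carrying $-\partial^\bsnu_{\bssigma}\calA_{\bssigma}$ in the $(1,1)$ slot and $-\partial^\bsnu_{\bssigma}\calA^\ast_{\bssigma}=-(\partial^\bsnu_{\bssigma}\calA_{\bssigma})^\ast$ in the $(3,2)$ slot, with zeros elsewhere. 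I would then lift the pointwise-in-time bound $\|\partial^\bsnu_{\bssigma}\calA_{\bssigma}\|_{\calL(V,V')}\le\tilde\bsb^\bsnu$ of the hypothesis (using that the $V$--$V'$ adjoint preserves the operator norm) to the Bochner setting: for $(w_1,w_2)\in X$ one has $w_1,w_2\in V_T$ with $\|w_i\|_{V_T}\le\|w_i\|_{W_T(V,V')}$, so
\[
\|(\partial^\bsnu_{\bssigma}\calA_{\bssigma})w_1\|_{V'_T}^2=\int_0^T\|(\partial^\bsnu_{\bssigma}\calA_{\bssigma})w_1(t)\|_{V'}^2\,\rmd t\le(\tilde\bsb^\bsnu)^2\|w_1\|_{V_T}^2,
\]
and likewise for the adjoint term. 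Summing the two nonzero image components yields $\|\partial^\bsnu_{\bssigma}G(\bssigma)\|_{\calL(X,Y')}\le\tilde\bsb^\bsnu$, uniformly in $\bssigma\in\fkS$.

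Combining this with property~(i) via submultiplicativity of the operator norm gives
\[
\sup_{\bssigma\in\fkS}\|G(\boldsymbol0)^{-1}(\partial^\bsnu_{\bssigma}G(\bssigma))\|_{\calL(X,X)}\le\|G(\boldsymbol0)^{-1}\|_{\calL(Y',X)}\,\sup_{\bssigma\in\fkS}\|\partial^\bsnu_{\bssigma}G(\bssigma)\|_{\calL(X,Y')}\le c_\calG(T)\,\tilde\bsb^\bsnu,
\]
which is exactly \eqref{eq:analytic} with $C_1=c_\calG(T)$ and the \emph{same} summing sequence $\tilde\bsb\in\ell^p(\bbN)$. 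Thus $G_{\bssigma}$ is $p$-analytic with the same regularity exponent $p$. Applying Theorem~\ref{thm:solanalytic} to the operator equation \eqref{eq:Geqn}, whose right-hand side is the stated element of $Y'$, then delivers \eqref{eq:state-adjoint-analytic} with $b_j=\tilde b_j/\ln2$; the constant $c_\calG(T)$ inherits the continuous, monotone dependence on $T$ already recorded in Theorem~\ref{thm:contsaddle}.

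The computations are routine. The only point that requires a little care is the lifting step, where the pointwise $\calL(V,V')$ bound, the norm equality under adjunction, and the time integration must be combined correctly across the block structure. Beyond that there is no real obstacle, precisely because $G$ depends on $\bssigma$ \emph{affinely} through $\calA_{\bssigma}$: differentiation produces no products or chains, so no factorial-type factors arise apart from the $|\bsnu|!$ already supplied by Theorem~\ref{thm:solanalytic}.
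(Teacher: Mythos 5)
Your proposal is correct and follows essentially the same route as the paper's own proof: property (i) is taken from Theorem~\ref{thm:contsaddle} with $C_1=c_\calG(T)$, property (ii) is obtained by noting that $\partial^\bsnu_{\bssigma}G(\bssigma)$ contains only the blocks $-\partial^\bsnu_{\bssigma}\calA_{\bssigma}$ and $-\partial^\bsnu_{\bssigma}\calA^\ast_{\bssigma}$, whose $\calL(V,V')$-bound $\tilde\bsb^\bsnu$ from the hypothesis lifts to the Bochner setting, and Theorem~\ref{thm:solanalytic} applied to \eqref{eq:Geqn} then yields \eqref{eq:state-adjoint-analytic}. The only cosmetic difference is that you bound $\|\partial^\bsnu_{\bssigma}G(\bssigma)\|_{\calL(X,Y')}$ as an operator norm first and then invoke submultiplicativity, whereas the paper runs the same estimate directly on a generic element $(w_1,w_2)$.
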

\begin{proof}
    From Lemma~\ref{thm:contsaddle} we know that~$G_{\bssigma}$ satisfies~\eqref{eq:boundedinv} with constant~$c_\calG(T)>0$. Hence, it remains to show that there is a nonnegative sequence~$\tilde{\bsb} \in \ell^p(\bbN)$, such that for all~$\bsnu \in \calF \setminus \{\boldsymbol{0}\}$, the operator~$G_{\bssigma}$ satisfies~\eqref{eq:analytic}.

    In order to prove this, we observe that for all~$\bsnu \in \calF \setminus \{\boldsymbol{0}\}$ and for all~$(w_1,w_2) \in W_T(V,V') \times W_T(V,V')$ we have
\begin{align}
    \|G(\boldsymbol{0})^{-1} \partial_{\bssigma}^\bsnu G(\bssigma) (w_1,w_2)\|^2_{W_T(V,V')\times W_T(V,V')} &\le c_\calG(T)^2 \|\partial_{\bssigma}^\bsnu G(\bssigma) (w_1,w_2)\|_{V'_T \times H \times V'_T \times H}^2 \notag\\
    &\!\!\!\!\!\!\!\!\!\!\!\!\!\!\!\!\!\!\!\!\!\!\!\!\!\!\!\!\!\!\!\!\!\!\!\!\!\!\!\!\!\!\!\!\!\!\!\!\!\!\!\!\!\!\!= c_\calG(T)^2 \left( \|\partial_{\bssigma}^\bsnu \calA_{\bssigma} w_1\|_{V'_T}^2 + \|\partial_{\bssigma}^\bsnu \calA^\ast_{\bssigma}  w_2\|_{V'_T}^2\right) \notag \\
    &\!\!\!\!\!\!\!\!\!\!\!\!\!\!\!\!\!\!\!\!\!\!\!\!\!\!\!\!\!\!\!\!\!\!\!\!\!\!\!\!\!\!\!\!\!\!\!\!\!\!\!\!\!\!\!\le c_\calG(T)^2  \|\partial_{\bssigma}^\bsnu \calA_{\bssigma}\|^2_{\calL(V,V')} \|(w_1,w_2)\|_{W_T(V,V') \times W_T(V,V')}^2.\notag
\end{align}
Thus, it holds that
\begin{align}
    \|G(\boldsymbol{0})^{-1} \partial_{\bssigma}^\bsnu G(\bssigma) \|_{\calL(W_T(V,V')\times W_T(V,V'))} \le  c_\calG(T)  \tilde{\bsb}^\bsnu,\notag
\end{align}
and~$G_{\bssigma}$ is~$p$-analytic with the constant~$ c_\calG(T)$.
\end{proof}

We remark that the condition that there exists a sequence~$\tilde \bsb \in \ell^p(\bbN)$, for some~$0<p\le1$, of nonnegative numbers, such that~$\|\partial_{\bssigma}^\bsnu \calA_{\bssigma}\|_{\calL(V,V')} \le \tilde \bsb^{\bsnu}$ for all~$\bsnu \in \calF \setminus \{\boldsymbol{0}\}$ follows from assumption~\eqref{A3} provided that~$\calA_{\bssigma}$ satisfies~\eqref{eq:analytic}. In particular, if~$\calA_\bssigma$ satisfies~\eqref{eq:analytic} for some sequence~$\bsrho$, then we can take~$\tilde b_j = C_{\calA}C_1\rho_j$, $j\ge 1$.

Since~$\|\cdot\|^2_{W_T(V,V')\times W_T(V,V')} = \|\cdot\|_{W_T(V,V')}^2 + \|\cdot\|_{W_T(V,V')}^2$, the bound~\eqref{eq:state-adjoint-analytic} holds in particular for the state and adjoint individually, that is
\begin{align}
    \|(\partial^\bsnu_{\bssigma} y)(\bssigma)\|_{W_T(V,V')} \le c_\calG(T) |\bsnu|!   \bsb^\bsnu \|(f , y_\circ, Q^\ast Qg, P^\ast Pg_T)\|_{V'_T \times H \times V'_T \times H}\label{eq:analyticstate}
    \intertext{and}
    \|(\partial^\bsnu_{\bssigma} q_1)(\bssigma)\|_{W_T(V,V')} \le c_\calG(T) |\bsnu|!   \bsb^\bsnu \|(f , y_\circ, Q^\ast Qg, P^\ast Pg_T)\|_{V'_T \times H \times V'_T \times H}\label{eq:analyticadjoint}.
\end{align}

\subsection{Parametric regularity of the optimal cost}
In order to establish the parametric regularity of the feedback law in the next section, we first investigate the parametric regularity of the following quantities of interest (QoI) related to the objective functional~\eqref{eq:J1}:
\begin{align}\label{eq:QoIs}
\big|\partial^\bsnu_{\bssigma} \|Q(y(\bssigma)-g)\|_{H_T}^2\big| \quad {\rm and}\quad  \big|\partial^\bsnu_{\bssigma} \|u(\bssigma)\|_{U_T}^2\big| \quad {\rm and}\quad \big|\partial^\bsnu_{\bssigma} \|P(y_T(\bssigma) - g_T)\|_H^2\big|.
\end{align}

For future reference, we include the general cases~$0 \ne g\in H_T$ and~$0 \ne g_T \in H$ in the following regularity analysis: for~$g$ independent of~$\bssigma$, we have~$\partial^\bsnu_{\bssigma} g = g$ if~$\bsnu=\boldsymbol{0}$ and~$\partial^\bsnu_{\bssigma} g = 0$ otherwise. 

\begin{lemma}\label{lem:objregular}
    Let the assumptions of Theorem~\ref{thm:s_saddle_op} hold. Then, it holds that
    \begin{subequations}
        \begin{align*}
        \big|\partial^\bsnu_{\bssigma} \|Q(y(\bssigma)-g)\|_{H_T}^2\big| &\le C_1(T) (|\bsnu|+1)!\, \bsb^\bsnu \\
        \big|\partial^\bsnu_{\bssigma} \|u(\bssigma)\|_{U_T}^2\big| &\le C_2(T) (|\bsnu|+1)! \,\bsb^\bsnu \\
        \big|\partial^\bsnu_{\bssigma} \|P(y_T(\bssigma) - g_T(\bssigma))\|_H^2\big| & \le C_3(T) (|\bsnu|+1)! \,\bsb^\bsnu , 
    \end{align*}
    for all~$\bsnu \in \calF$, where
    \begin{align}
        C_1(T) &:= \|Q\|_{\calL(H)}^2\big(c_V c_\calG(T) \|(f , y_\circ, Q^\ast Qg, P^\ast P g_T)\|_{V'_T \times H \times V'_T \times H}+\|g\|_{H_T}\big)^2\notag \\
        C_2(T) &:= \|B\|_{\calL(U,H)}^2 c_V^2 c_\calG(T)^2\|(f , y_\circ, Q^\ast Qg, P^\ast P g_T)\|_{V'_T \times H \times V'_T \times H}^2 \notag\\
        C_3(T) &:= \|P\|_{\calL(H)}^2 (\|g_T\|_H + (c_\calG(T)^2 +1)\|(f,y_\circ,Q^\ast Q g, P^\ast P g_T)\|_{V_T' \times H \times V_T' \times H})^2. \notag
    \end{align}
    \end{subequations}
    In particular, the following regularity result holds for the optimal cost,
    \begin{align}
        \big|\partial^\bsnu_{\bssigma} \calJ(y_{\bssigma},u_{\bssigma})\big| \le \frac{C_4(T)}{2} (|\bsnu|+1)! \bsb^\bsnu,
    \end{align}
    for all~$\bsnu \in \calF$ with~$C_4(T) =\sum_{i=1}^3 C_i(T)$ depending continuously and monotonically increasing on~$T$.
\end{lemma}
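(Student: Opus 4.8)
The plan is to write each of the three quantities of interest in~\eqref{eq:QoIs} as a squared Hilbert-space norm $\langle z(\bssigma),z(\bssigma)\rangle$ of an analytic map $z(\bssigma)$ and to differentiate by the Leibniz rule. For the first I take $z(\bssigma)=Q(y(\bssigma)-g)$ in $H_T$, for the second $z(\bssigma)=u(\bssigma)=B^\ast q_1(\bssigma)$ in $U_T$ via the gradient equation~\eqref{eq:gradient}, and for the third $z(\bssigma)=P(y_T(\bssigma)-g_T)$ in $H$. The Leibniz rule yields
\begin{align*}
\partial^\bsnu_{\bssigma}\langle z(\bssigma),z(\bssigma)\rangle=\sum_{\bsm\le\bsnu}\binom{\bsnu}{\bsm}\big\langle\partial^\bsm_{\bssigma}z(\bssigma),\partial^{\bsnu-\bsm}_{\bssigma}z(\bssigma)\big\rangle,
\end{align*}
so that, after the Cauchy--Schwarz inequality, the task reduces to a uniform-in-$\bssigma$ bound on $\|\partial^\bsm_{\bssigma}z(\bssigma)\|$ for every $\bsm\le\bsnu$.

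The individual derivative bounds are supplied by Theorem~\ref{thm:s_saddle_op}. As $g$, $g_T$ and $y_\circ$ are parameter-independent, $\partial^\bsm_{\bssigma}(y-g)=\partial^\bsm_{\bssigma}y$ for $\bsm\ne\boldsymbol{0}$, so the target enters only through the $\bsm=\boldsymbol{0}$ term. Combining $\|\cdot\|_{H_T}\le c_V\|\cdot\|_{W_T(V,V')}$ with~\eqref{eq:analyticstate},~\eqref{eq:analyticadjoint} and the boundedness of $Q$, $B^\ast$, $P$, I would exhibit for each map a single constant $K_i$ with $K_i^2=C_i(T)$ such that $\|\partial^\bsm_{\bssigma}z(\bssigma)\|\le K_i\,|\bsm|!\,\bsb^\bsm$ for all $\bsm$; the $\bsm=\boldsymbol{0}$ term is where $\|g\|_{H_T}$ respectively $\|g_T\|_H$ are absorbed. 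The only nonroutine estimate is the time-$T$ evaluation in the third quantity: using the energy identity $\|v(T)\|_H^2=\|v(0)\|_H^2+2\int_0^T\langle\dot v,v\rangle\rd t$ together with $\partial^\bsm_{\bssigma}y(\bssigma;0)=\partial^\bsm_{\bssigma}y_\circ=0$ for $\bsm\ne\boldsymbol{0}$, the value $\|\partial^\bsm_{\bssigma}y_T(\bssigma)\|_H$ is controlled by $\|\partial^\bsm_{\bssigma}y(\bssigma)\|_{W_T(V,V')}$ alone, which produces $C_3(T)$.

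Substituting these bounds into the Leibniz expansion and using $\bsb^\bsm\bsb^{\bsnu-\bsm}=\bsb^\bsnu$, everything collapses onto the combinatorial identity
\begin{align*}
\sum_{\bsm\le\bsnu}\binom{\bsnu}{\bsm}\,|\bsm|!\,|\bsnu-\bsm|!=(|\bsnu|+1)!\,.
\end{align*}
I would prove this by grouping over $k:=|\bsm|$ and invoking $\sum_{\bsm\le\bsnu,\,|\bsm|=k}\binom{\bsnu}{\bsm}=\binom{|\bsnu|}{k}$, the coefficient of $x^k$ in $\prod_{j}(1+x)^{\nu_j}=(1+x)^{|\bsnu|}$, after which the sum telescopes to $\sum_{k=0}^{|\bsnu|}|\bsnu|!=(|\bsnu|+1)!$. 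This gives the three asserted bounds. Finally, since $\calJ(y_\bssigma,u_\bssigma)=\tfrac12(\|Q(y_\bssigma-g)\|_{H_T}^2+\|u_\bssigma\|_{U_T}^2+\|P(y_\bssigma(T)-g_T)\|_H^2)$, linearity of $\partial^\bsnu_{\bssigma}$ and the triangle inequality yield $|\partial^\bsnu_{\bssigma}\calJ|\le\tfrac12(C_1(T)+C_2(T)+C_3(T))(|\bsnu|+1)!\,\bsb^\bsnu$, i.e.\ the claim with $C_4(T)=\sum_{i=1}^3C_i(T)$; continuity and monotonicity in $T$ are inherited from $c_\calG(T)$.

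The main obstacle I expect is the time-$T$ trace bound for the third quantity of interest: unlike the first two, whose derivative bounds follow directly from the $W_T(V,V')$-estimates~\eqref{eq:analyticstate}--\eqref{eq:analyticadjoint}, controlling $\|\partial^\bsm_{\bssigma}y_T(\bssigma)\|_H$ requires a trace/energy argument that must remain uniform in $\bssigma$ and well-behaved as $T\to0$. The vanishing of the parametric derivatives of the fixed initial datum at $t=0$ is the feature that makes this estimate (and hence the constant $C_3(T)$) clean, whereas the combinatorial identity, though it is what manufactures the factorial $(|\bsnu|+1)!$, is elementary once the grouping over $|\bsm|$ is used.
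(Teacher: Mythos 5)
Your proposal is correct and follows essentially the same route as the paper's proof: Leibniz rule on the squared norm, Cauchy--Schwarz, the derivative bounds \eqref{eq:analyticstate}--\eqref{eq:analyticadjoint} (with the targets absorbed into the $\bsm=\boldsymbol{0}$ terms), the energy identity $\|\partial^\bsm_{\bssigma}y_{\bssigma}(T)\|_H^2\le\|\partial^\bsm_{\bssigma}y_{\bssigma}\|_{W_T(V,V')}^2+\|\partial^\bsm_{\bssigma}y_{\bssigma}(0)\|_H^2$ for the terminal term, and the Vandermonde convolution producing $(|\bsnu|+1)!$. The only cosmetic difference is that the paper handles the $\bsm=\boldsymbol{0}$ initial-condition contribution via the uniform bound $\|\partial^\bsm_{\bssigma}y_{\bssigma}(0)\|_H\le|\bsm|!\,\bsb^\bsm\|(f,y_\circ,Q^\ast Qg,P^\ast Pg_T)\|$ rather than by splitting off the case $\bsm\ne\boldsymbol{0}$ where it vanishes, which is exactly where the $+1$ in $(c_\calG(T)^2+1)$ of $C_3(T)$ comes from.
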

\begin{proof}
Let~$\bsnu \in \calF$. We begin with the first QoI in~\eqref{eq:QoIs}. Using that~$Q$ is independent of~$\bssigma \in \fkS$, with~\eqref{eq:analyticstate}, we have
\begin{align}
    \|\partial^\bsnu_{\bssigma} Q(y(\bssigma)-g)\|_{H_T} &\le \big(\|\partial^\bsnu_{\bssigma}  Qy(\bssigma)\|_{H_T} + \|\partial^\bsnu_{\bssigma} Qg\|_{H_T} \big) \le C_g |\bsnu|! \bsb^\bsnu ,\label{eq:helpterm1}
\end{align}
where~$C_g := \|Q\|_{\calL(H)}\big(c_V c_\calG(T) \|(f , y_\circ, Q^\ast Qg, P^\ast P g_T)\|_{V'_T \times H \times V'_T \times H}+\|g\|_{H_T}\big)$. Then, for the first QoI in~\eqref{eq:QoIs} we have
\begin{align}
    \big|\partial^\bsnu_{\bssigma} \|Q(y(\bssigma) - g)\|_{H_T}^2\big| &= \big|\partial^\bsnu_{\bssigma} \langle Q(y(\bssigma) - g) , Q(y(\bssigma) - g) \rangle_{H_T}\big| \notag \\
    &=\big| \sum_{\bsm \le \bsnu} \binom{\bsnu}{\bsm} \langle \partial^\bsm_{\bssigma} Q(y(\bssigma)-g), \partial^{\bsnu-\bsm}_{\bssigma} Q(y(\bssigma)-g)\rangle_{H_T}\big|. \notag
\end{align}
Using the Cauchy--Schwarz inequality, the triangle inequality, and~\eqref{eq:helpterm1} we obtain
\begin{align}
    \big|\partial^\bsnu_{\bssigma} \|Q(y(\bssigma) - g)\|_{H_T}^2\big| &\le \sum_{\bsm \le \bsnu} \binom{\bsnu}{\bsm} C_g |\bsm|! \bsb^\bsm  C_g |\bsnu -\bsm|! \bsb^{\bsnu-\bsm}   \notag\\
    &= C_g^2 \bsb^\bsnu \sum_{\ell = 0}^{|\bsnu|} |\ell|! (|\bsnu|-\ell)! \sum_{\substack{|\bsm|=\ell\\ \bsm \le \bsnu}} \binom{\bsnu}{\bsm} = C_g^2 \bsb^\bsnu (|\bsnu|+1)! ,\notag
\end{align}
where we used the Vandermonde convolution~$\sum_{\substack{|\bsm|=\ell\\ \bsm \le \bsnu}} \binom{\bsnu}{\bsm} = \binom{|\bsnu|}{\ell} =  \frac{|\bsnu|!}{\ell! (|\bsnu|-\ell)!}$.

For the second QoI in~\eqref{eq:QoIs} we use the optimality condition~\eqref{eq:gradient}, i.e.,~$u(\bssigma) = B^\ast q_1(\bssigma)$. Using the Cauchy--Schwarz inequality and the triangle inequality, and~\eqref{eq:analyticadjoint} yields
\begin{align}
    &\big|\partial^\bsnu_{\bssigma} \|B^\ast q_1(\bssigma)\|_{U_T}^2\big| \le \sum_{\bsm\le \bsnu} \binom{\bsnu}{\bsm} \| \partial^\bsm_{\bssigma} B^\ast q_1(\bssigma)\|_{U_T} \|\partial^{\bsnu-\bsm}_{\bssigma}  B^\ast q_1(\bssigma) \|_{U_T} \notag \\
    &\le \sum_{\bsm \le \bsnu} \binom{\bsnu}{\bsm} \|B\|_{\calL(U,H)} c_V c_\calG(T) |\bsm|! \bsb^\bsm \|(f , y_\circ, Q^\ast Qg, P^\ast P g_T)\|_{V'_T \times H \times V'_T \times H}  \notag\\
    &\qquad \qquad \times \|B\|_{\calL(U,H)} c_V c_\calG(T) |\bsnu -\bsm|! \bsb^{\bsnu-\bsm}\|(f , y_\circ, Q^\ast Qg, P^\ast P g_T)\|_{V'_T \times H \times V'_T \times H}  \notag\\
    &\le \|B\|_{\calL(U,H)}^2 c_V^2 c_\calG(T)^2 \|(f , y_\circ, Q^\ast Qg, P^\ast P g_T)\|_{V'_T \times H \times V'_T \times H}^2 \bsb^\bsnu \sum_{\ell = 0}^{|\bsnu|} |\ell|! (|\bsnu|-\ell)! \sum_{\substack{|\bsm|=\ell\\ \bsm \le \bsnu}} \binom{\bsnu}{\bsm}\notag\\
    &= \|B\|_{\calL(U,H)}^2 c_V^2 c_\calG(T)^2 \|(f , y_\circ, Q^\ast Qg, P^\ast P g_T)\|_{V'_T \times H \times V'_T \times H}^2 \bsb^\bsnu(|\bsnu|+1)!,\notag
\end{align}
where we used the Vandermonde convolution as in the steps for the first QoI.

In order to prove the bound for the third QoI in~\eqref{eq:QoIs}, we use the following estimate:
\begin{align}
    \|\partial_{\bssigma}^\bsnu y_{\bssigma}(T)\|_H^2 - \|\partial_{\bssigma}^\bsnu y_{\bssigma}(0)\|_H^2 &= \int_0^T \frac{\mathrm d}{\mathrm dt} \|\partial_{\bssigma}^\bsnu y_{\bssigma}(t)\|_H^2 \,\mathrm dt = 2\int_0^T \langle \frac{\mathrm d}{\mathrm dt} \partial_{\bssigma}^\bsnu y_{\bssigma}(t), \partial_{\bssigma}^\bsnu y_{\bssigma}(t) \rangle_{V',V}\mathrm dt \notag\\
    &\le 2 \|\frac{\mathrm d}{\mathrm dt} \partial^\bsnu_{\bssigma} y_{\bssigma}\|_{V'_T} \|\partial^\bsnu_{\bssigma} y_{\bssigma}\|_{V_T} \le  \|\partial^\bsnu_{\bssigma} y_{\bssigma}\|_{W_T(V,V')}^2. \notag
\end{align}
Together with~\eqref{eq:analyticstate}, this leads to
\begin{align}\label{eq:pdiff}
    \|\partial_{\bssigma}^\bsnu y_{\bssigma}(T)\|_H^2 &\le \|\partial^\bsnu_{\bssigma} y_{\bssigma}\|_{W_T(V,V')}^2 + \|\partial_{\bssigma}^\bsnu y_{\bssigma}(0)\|_H^2\notag\\ &\le (c_\calG(T)^2+1) (|\bsnu|! \bsb^\bsnu)^2 \|(f,y_\circ,Q^\ast Qg, P^\ast Pg_T)\|_{V'_T\times H \times V'_T \times H}^2,
\end{align}
where we used $\|\partial^\bsnu_{\bssigma} y_{\bssigma}(0)\|_H^2 \le (|\bsnu|! \bsb^\bsnu)^2 \|(f,y_\circ,Q^\ast Q g, P^\ast P g_T)\|_{V_T' \times H \times V_T' \times H}^2$ for all~$\bsnu \in \calF$.
Then, setting~$C_{\rm ter} :=  (c_\calG(T)^2+1)$, and by application of the Cauchy--Schwarz inequality as well as the triangle inequality, we get by~\eqref{eq:pdiff}
\begin{align}
    \big|\partial^\bsnu_{\bssigma}  \|P(y_{\bssigma}(T)-g_T)\|^2_{H}\big| 
    &\le \sum_{\bsm \le \bsnu} \binom{\bsnu}{\bsm} \| \partial^\bsm_{\bssigma} P(y_{\bssigma}(T) - g_T)\|_H \|\partial^{\bsnu-\bsm}_{\bssigma} P(y_{\bssigma}(T) - g_T)\|_H \notag\\ 
    &= \sum_{\bsm \le \bsnu} \binom{\bsnu}{\bsm} \|P \partial^\bsm_{\bssigma} (y_{\bssigma}(T) - g_T)\|_H \|P\partial^{\bsnu-\bsm}_{\bssigma} (y_{\bssigma}(T) - g_T)\|_H\notag \\ 
    &\le \|P\|_{\calL(H)}^2 \sum_{\bsm \le \bsnu} \binom{\bsnu}{\bsm}  \|\partial^\bsm_{\bssigma} (y_{\bssigma}(T) - g_T)\|_H \|\partial^{\bsnu-\bsm}_{\bssigma} (y_{\bssigma}(T) - g_T)\|_H. \notag
\end{align}
Since~$g_T$ is independent of~$\bssigma\in\fkS$, we have for all~$\bsnu \in \calF$ that~$\|\partial_{\bssigma}^\bsnu g_T\|_H \le \delta_{\bsnu, \boldsymbol{0}} |\bsnu|! \bsb^\bsnu \|g_T\|_{H} \le |\bsnu|! \bsb^\bsnu \|g_T\|_{H}$. Setting~$\tilde{C}_{\rm ter} := C_{\rm ter} \|(f,y_\circ,Q^\ast Q g, P^\ast P g_T)\|_{V_T' \times H \times V_T' \times H}$ we get by~\eqref{eq:pdiff}
\begin{align}
    &\big|\partial^\bsnu_{\bssigma}  \|P(y_{\bssigma}(T)-g_T)\|^2_{H}\big|\notag\\
    &\quad \le \|P\|_{\calL(H)}^2 \sum_{\bsm \le \bsnu} \binom{\bsnu}{\bsm}  (\|g_T\|_H + \tilde{C}_{\rm ter}) |\bsm|! \bsb^\bsm (\|g_T\|_H + \tilde{C}_{\rm ter}) |\bsnu-\bsm|! \bsb^{\bsnu-\bsm}\notag\\
    &\quad\le \|P\|_{\calL(H)}^2 (\|g_T\|_H + \tilde{C}_{\rm ter})^2 \sum_{\bsm \le \bsnu} \binom{\bsnu}{\bsm}   |\bsm|! \bsb^\bsm |\bsnu-\bsm|! \bsb^{\bsnu-\bsm} \notag\\
    &\quad\le \|P\|_{\calL(H)}^2 (\|g_T\|_H + \tilde{C}_{\rm ter})^2 (|\bsnu|+1)! \bsb^\bsnu,\notag
\end{align}
which proves the desired result.
\end{proof}

\section{Analytic dependence of the feedback law on the parameters}\label{sec:analyticFB}
It is well-known that the optimal control, minimizing~\eqref{eq:J1} subject to~\eqref{eq:psys}, can be written as a feedback law (depending on~$\bssigma$) applied to the optimal state, that is
\begin{align*}
    u(\bssigma) = K(\bssigma, y(\bssigma)).
\end{align*}
In order to apply QMC integration to approximate the mean-based feedback~\eqref{eq:FBgoal} in Sect.~\ref{sec:QMC}, we investigate the regularity of~$\bssigma \mapsto K(\bssigma)$. 
In fact, under the conditions of the previous section, we will show that the optimal feedback law depends analytically on the parameters.

In the presented parameterized linear quadratic optimal control problem the feedback is based on a parameter-dependent differential Riccati equation. In \cite{Ran1988} the authors investigate the analyticity of the solution to the algebraic Riccati equations with respect to perturbations in the system matrices. However, no bounds on the derivatives are provided, which are of central importance in our work.

We divide the following analysis into two cases: first we investigate the case~$g = g_T = f = 0$, and then we consider nontrivial target functions~$g$ and $g_T$ as well as nontrivial external forcing~$f$.

\subsection{Homogeneous constraint}\label{sec:homogeneous}
In the homogeneous case, with~$f=g=g_T=0$, the optimal feedback law is given as
\begin{align}\label{eq:linearfb}
    K_{\bssigma}(t) = -B^\ast \Pi_{\bssigma}(T-t),
\end{align}
where~$\Pi_{\bssigma}(t)$, for~$t\in(0,T)$, solves the operator-valued differential Riccati equation
\begin{align}
\dot{\Pi}_{\bssigma} &= \Pi_{\bssigma} \calA_{\bssigma} +\calA_{\bssigma}^\ast \Pi_{\bssigma} - \Pi_{\bssigma} B B^\ast \Pi_{\bssigma}+ Q^*Q,\qquad
\Pi_{\bssigma}(0) = P^*P.\label{eq:Riccati}
\end{align}

For our analysis, we will use the fact that for all~$\bssigma$ the optimal cost is given by
\begin{align}\label{eq:opticost}
    \frac12 \langle \Pi_{\bssigma}(T) y_\circ, y_\circ\rangle_H = \calJ(y_{\bssigma},u_{\bssigma}),
\end{align}
and that~$q_1(\bssigma;0) = \Pi_{\bssigma}(T) y_\circ$. First, we show that~$\partial_{\bssigma}^\bsnu q_1(\bssigma;0) \in H$, and thus~$\partial_{\bssigma}^\bsnu \Pi_{\bssigma}(T) y_\circ \in H$, for all~$\bsnu \in \calF$. To this end, we estimate, similar to~\eqref{eq:pdiff},
\begin{align}
\|\partial_{\bssigma}^\bsnu q_{1}(\bssigma;0)\|_H^2 &\le \|\partial^\bsnu_{\bssigma} q_{1}(\bssigma;\cdot)\|_{W_T(V,V')}^2 + \|\partial_{\bssigma}^\bsnu P^\ast P y(\bssigma;T)\|_H^2 
\le C (|\bsnu|!\bsb^\bsnu)^2, \notag
\end{align}
for some~$C>0$, where we use~$q_1(\bssigma;T) = - P^\ast P y(\bssigma;T)$, as well as~\eqref{eq:analyticadjoint} and~\eqref{eq:pdiff}.
Thus, we have~$\partial^\bsnu_{\bssigma} \langle \Pi_{\bssigma}(T) y_\circ, y_\circ\rangle_H = \langle \partial^\bsnu_{\bssigma} \Pi_{\bssigma}(T) y_\circ, y_\circ\rangle_H$ for all~$\bsnu \in \calF$. Taking the~$\bsnu$-th derivative of~\eqref{eq:opticost}, we find due to Lemma~\ref{lem:objregular},
\begin{align}
    \big|\langle \partial^\bsnu_{\bssigma} \Pi_{\bssigma}(T) y_\circ, y_\circ\rangle_H\big| 
    &\le C_4(T) (|\bsnu|+1)! \bsb^\bsnu,\quad \forall \bsnu \in \calF,\notag
\end{align}
where~$C_4(T)$ is defined in Lemma~\ref{lem:objregular}. In the case~$f=g=g_T=0$ we have 
\begin{align*}
    C_4(T) = \left(\|Q\|_{\calL(H)}^2 c_V^2 c_\calG(T)^2 + \|B\|_{\calL(U,H)}^2 c_V^2 c_\calG(T)^2 + \|P\|_{\calL(H)}^2 (2c_\calG(T)^2 +1)\right) \|y_\circ\|^2_H.
\end{align*}

Since~$\partial^\bsnu_{\bssigma} \Pi_{\bssigma}(T)$ is bounded, linear and self-adjoint, taking the supremum over all initial conditions~$y_\circ$ on the unit sphere in~$H$ leads to
\begin{align}
    \|\partial^\bsnu_{\bssigma} \Pi_{\bssigma}(T)\|_{\calL(H)} = \sup_{\|y_\circ\|_H= 1} |\langle \partial^\bsnu_{\bssigma} \Pi_{\bssigma}(T) y_\circ, y_\circ\rangle_H|.\notag
\end{align}
Thus, it holds that
\begin{align}\label{eq:boundPiT}
    \|\partial^\bsnu_{\bssigma} \Pi_{\bssigma}(T)\|_{\calL(H)} &\le \sup_{\|y_\circ\|_H = 1} \Big(C_4(T) (|\bsnu|+1)! \bsb^\bsnu\Big) = C_5(T) (|\bsnu|+1)! \bsb^\bsnu, 
\end{align}
with~$C_5(T) :=  \left(\|Q\|_{\calL(H)}^2 c_V^2 c_\calG(T)^2 + \|B\|_{\calL(U,H)}^2 c_V^2 c_\calG(T)^2 + \|P\|_{\calL(H)}^2 (2c_\calG(T)^2 +1)\right)$.

To bound~$\Pi_{\bssigma}(\tau)$ for~$\tau \in [0,T]$ consider~\eqref{eq:J1} with~$T$ replaced by~$\tau$. In view of the autonomy of~$\calA_{\bssigma}, B,P$, and~$Q$ the corresponding Riccati equation is~$\eqref{eq:Riccati}$ restricted to~$[0,\tau]$. Thus, we can replace~$T$ in~\eqref{eq:boundPiT} by any~$\tau \in [0,T]$ and then take the supremum over all~$\tau \in [0,T]$ leading us to
\begin{align}\label{eq:boundPi}
    \|\partial^\bsnu_{\bssigma} \Pi_{\bssigma}(\tau)\|_{\calL(H)} &\le \sup_{\tau \in [0,T]} \Big(C_5(\tau) (|\bsnu|+1)! \bsb^\bsnu\Big) = C_5(T) (|\bsnu|+1)! \bsb^\bsnu,
\end{align}
since~$C_5(T)$ is continuous and monotonically increasing in~$T$. 

We have thus shown the following result.
\begin{theorem}\label{thm:fbhomog}
    Under the assumptions of Theorem~\ref{thm:s_saddle_op}, the feedback law~\eqref{eq:linearfb} depends analytically on the parameters~$\bssigma \in \fkS$ with
    \begin{align*}
    \|\partial_{\bssigma}^\bsnu (-B^\ast \Pi_{\bssigma}(T-t))\|_{\calL(H,U)} \le \|B\|_{\calL(U,H)} C_5(T) (|\bsnu|+1)! \bsb^\bsnu \qquad \forall t\in[0,T],
\end{align*}
for all~$\bsnu \in \calF$.
\end{theorem}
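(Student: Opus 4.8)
The plan is to read off the claimed bound directly from the estimate \eqref{eq:boundPi} for the parametric derivatives of the Riccati operator, which has already been established, by transferring it through the bounded, parameter-independent control operator $B^\ast$. Since the analytic regularity of the feedback \eqref{eq:linearfb} is entirely encoded in $\Pi_{\bssigma}$, the remaining work is purely algebraic and should be a one-step argument.

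First I would use that $B$, and hence $B^\ast$, is independent of $\bssigma$, so that the partial derivative commutes with left composition by $B^\ast$:
\begin{align*}
    \partial_{\bssigma}^\bsnu\bigl(-B^\ast \Pi_{\bssigma}(T-t)\bigr) = -B^\ast\,(\partial_{\bssigma}^\bsnu \Pi_{\bssigma})(T-t),\qquad \bsnu \in \calF.
\end{align*}
Next I would apply submultiplicativity of the operator norm together with the isometry $\|B^\ast\|_{\calL(H,U)} = \|B\|_{\calL(U,H)}$ (valid since $U$ and $H$ are Hilbert spaces) to obtain
\begin{align*}
    \|\partial_{\bssigma}^\bsnu(-B^\ast \Pi_{\bssigma}(T-t))\|_{\calL(H,U)} \le \|B\|_{\calL(U,H)}\, \|(\partial_{\bssigma}^\bsnu \Pi_{\bssigma})(T-t)\|_{\calL(H)}.
\end{align*}
Finally, for any $t \in [0,T]$ the argument $T-t$ lies in $[0,T]$, so \eqref{eq:boundPi} applies with $\tau = T-t$ and gives $\|(\partial_{\bssigma}^\bsnu \Pi_{\bssigma})(T-t)\|_{\calL(H)} \le C_5(T)(|\bsnu|+1)!\,\bsb^\bsnu$. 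Combining the last two displays yields the assertion uniformly in $t \in [0,T]$ and for all $\bsnu \in \calF$.

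I would stress that the genuine difficulty does not lie in this transfer step but in the estimate \eqref{eq:boundPi} preceding the statement, so I regard its two ingredients as the real crux. The first is the use of the optimal-cost identity \eqref{eq:opticost}, namely $\tfrac12\langle \Pi_{\bssigma}(T)y_\circ,y_\circ\rangle_H = \calJ(y_{\bssigma},u_{\bssigma})$, to push the derivative bounds of Lemma~\ref{lem:objregular} onto the quadratic form $\langle \partial_{\bssigma}^\bsnu \Pi_{\bssigma}(T)y_\circ, y_\circ\rangle_H$; this first requires justifying that $\partial_{\bssigma}^\bsnu$ may be pulled inside the inner product, which is done via the $H$-bound on $\partial_{\bssigma}^\bsnu q_1(\bssigma;0)$. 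The second is the recovery of the full operator norm from the quadratic form by taking the supremum over the unit sphere, which is legitimate precisely because each $\partial_{\bssigma}^\bsnu \Pi_{\bssigma}(T)$ is self-adjoint, followed by the time-invariance argument that upgrades the bound at the terminal time to every intermediate $\tau \in [0,T]$ using the monotonicity of $\tau \mapsto C_5(\tau)$. With those in hand, the theorem follows immediately from the short computation above.
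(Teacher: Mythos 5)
Your proposal is correct and takes essentially the same route as the paper: the paper's proof of Theorem~\ref{thm:fbhomog} is exactly the derivation culminating in \eqref{eq:boundPi} (the optimal-cost identity \eqref{eq:opticost} combined with Lemma~\ref{lem:objregular}, the justification via the $H$-bound on $\partial_{\bssigma}^\bsnu q_1(\bssigma;0)$ for differentiating inside the inner product, self-adjointness plus the supremum over the unit sphere, and the time-invariance/monotonicity argument replacing $T$ by $\tau$), after which the theorem follows by the same transfer through the parameter-independent $B^\ast$ that you spell out. The only difference is cosmetic: the paper leaves the final submultiplicativity and adjoint-isometry step implicit, whereas you make it explicit.
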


\subsection{Nonhomogeneous case}\label{sec:nonhomogeneous}
Let us consider the nontrivial tracking problem with~$g \ne 0$ and~$g_T\ne 0$ in~\eqref{eq:J1}, as well as~$f \neq 0$ in~\eqref{eq:psys}. After the variable transformation~$x_{\bssigma} := y_{\bssigma} - g$ the nonhomogeneous term~$r_{\bssigma} := f(t) + \calA_{\bssigma} g(t) - \dot{g}(t)$ naturally arises in the state equation of tracking-type problems. We recall the following result from~\cite[Thm.~7.1, Part IV, Ch.~1]{bensoussan2007representation}, for which we assume the additional regularity~$f \in H_T$ and~$g \in W^{1,2}(0,T;H)\cap L^2(0,T;D(\calA_{\bssigma}))$.%
\begin{theorem}
Given~$\bssigma \in \fkS$, let~$\Pi_{\bssigma} \in C([0,T];\calL(H))$ denote the unique self-adjoint and nonnegative solution of~\eqref{eq:Riccati}. Then, there exists a unique minimizer~$(x_{\bssigma},u_{\bssigma})$ of~\eqref{eq:J1} subject to~\eqref{eq:psys}. This optimal pair satisfies, for~$t\in(0,T)$,
\begin{itemize}
\item[1.] $u_{\bssigma}$ is given in feedback form by
\begin{align}\label{eq:uopt}
 u_{\bssigma}(t) = - B^\ast \left( \Pi_{\bssigma}(T-t) x_{\bssigma}(t) + h_{\bssigma}(t) \right);
\end{align}
\item[2.] $x_{\bssigma}$ is the mild solution to the closed-loop system
\begin{align*}
\dot{ x}_{\bssigma}(t) &= \left(\calA_{\bssigma} - BB^\ast \Pi_{\bssigma}(T-t) \right) x_{\bssigma}(t) - BB^\ast h_{\bssigma}(t) + r_{\bssigma}(t),\qquad x(0) = x_\circ;
\end{align*}
where
\begin{align}
-\dot{h}_{\bssigma}(t) &= \left(\calA_{\bssigma}^\ast - \Pi_{\bssigma}(T-t)BB^\ast\right) h_{\bssigma}(t) + \Pi_{\bssigma}(T-t)r_{\bssigma}(t),\qquad
h_{\bssigma}(T) = 0;\label{eq:h}
\end{align}
\item[3.] the optimal cost is given by
\begin{equation*}
\begin{split}
\calJ(x, u) &= \frac{1}{2} \langle \Pi_{\bssigma}(T) x_\circ,x_\circ\rangle_{ H} + \langle h_{\bssigma}(0), x_\circ\rangle_{ H} \\
&\quad+ \int_0^T \left( \langle h_{\bssigma}(s), r_{\bssigma}(s) \rangle_{ H} - \frac{1}{2}\|B^\ast h_{\bssigma}(s)\|_{ U}^2\right) \mathrm ds.
\end{split}
\end{equation*}
\end{itemize}
\end{theorem}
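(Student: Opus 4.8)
The plan is to establish the three claims by pairing a soft existence/uniqueness argument with a single \emph{completion-of-squares} identity that simultaneously produces the feedback form, the closed-loop dynamics, and the value of the optimal cost. First I would fix $\bssigma \in \fkS$ and record existence and uniqueness of the minimizer of~\eqref{eq:J1} subject to~\eqref{eq:psys}: after the transformation $x_{\bssigma} = y_{\bssigma} - g$ the control-to-state map is affine, the reduced functional is quadratic with a coercive control term $\tfrac12\|u_{\bssigma}\|_{U_T}^2$, hence strictly convex and weakly lower semicontinuous, so the direct method yields a unique optimal pair $(x_{\bssigma},u_{\bssigma})$, exactly as in the proof of Theorem~\ref{thm:contsaddle}. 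Throughout I take for granted the well-posedness of~\eqref{eq:Riccati} (the unique self-adjoint nonnegative $\Pi_{\bssigma} \in C([0,T];\calL(H))$) and of~\eqref{eq:h}, both part of the cited framework. The form of $\Pi_{\bssigma}$ and $h_{\bssigma}$ is motivated by the ansatz $q_1(\bssigma) = -(\Pi_{\bssigma}(T-t)x_{\bssigma} + h_{\bssigma})$ connecting the adjoint of the optimality system~\eqref{eq:adjoint}--\eqref{eq:gradient} to the state.

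The core of the proof is to differentiate, along an arbitrary admissible trajectory solving $\dot{x}_{\bssigma} = \calA_{\bssigma} x_{\bssigma} + B u_{\bssigma} + r_{\bssigma}$, the scalar map $t \mapsto \tfrac12\langle \Pi_{\bssigma}(T-t) x_{\bssigma}(t), x_{\bssigma}(t)\rangle_H + \langle h_{\bssigma}(t), x_{\bssigma}(t)\rangle_H$. Substituting~\eqref{eq:Riccati} for $\tfrac{\rmd}{\rmd t}\Pi_{\bssigma}(T-t)$, equation~\eqref{eq:h} for $\dot h_{\bssigma}$, and the state equation for $\dot x_{\bssigma}$, the unbounded terms involving $\calA_{\bssigma}$ and $\calA_{\bssigma}^\ast$ cancel pairwise by self-adjointness of $\Pi_{\bssigma}(T-t)$, and the surviving terms regroup into a perfect square. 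Integrating over $[0,T]$ and inserting $\Pi_{\bssigma}(0) = P^\ast P$, $h_{\bssigma}(T) = 0$, and $x_{\bssigma}(0) = x_\circ$, I expect the identity
\begin{align*}
\calJ(x_{\bssigma}, u_{\bssigma}) &= \frac12 \langle \Pi_{\bssigma}(T) x_\circ, x_\circ\rangle_H + \langle h_{\bssigma}(0), x_\circ\rangle_H + \int_0^T \Big( \langle h_{\bssigma}(s), r_{\bssigma}(s)\rangle_H - \frac12 \|B^\ast h_{\bssigma}(s)\|_U^2 \Big) \rmd s \\
&\quad + \frac12 \int_0^T \big\| u_{\bssigma}(s) + B^\ast\big(\Pi_{\bssigma}(T-s) x_{\bssigma}(s) + h_{\bssigma}(s)\big)\big\|_U^2 \, \rmd s .
\end{align*}
The first three terms are control-independent, while the final integral is nonnegative and vanishes precisely when $u_{\bssigma}(s) = -B^\ast(\Pi_{\bssigma}(T-s)x_{\bssigma}(s) + h_{\bssigma}(s))$ for a.e.\ $s$. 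By uniqueness of the minimizer this pins down the optimal control in the feedback form~\eqref{eq:uopt} (claim~1), dropping the vanishing square gives the cost formula (claim~3), and inserting the feedback into the state equation yields the closed-loop system (claim~2).

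The main obstacle is rigour in the differentiation step: since $\calA_{\bssigma} \in \calL(V,V')$ is unbounded on $H$ and $x_{\bssigma}$ is only a mild solution, the map $t \mapsto \langle \Pi_{\bssigma}(T-t)x_{\bssigma}(t), x_{\bssigma}(t)\rangle_H$ need not be classically differentiable and the manipulations of $\langle \Pi_{\bssigma}(T-t)\calA_{\bssigma} x_{\bssigma}, x_{\bssigma}\rangle$ are formal at that level. Here I would exploit the regularity assumed in the statement, $f \in H_T$ and $g \in W^{1,2}(0,T;H) \cap L^2(0,T;D(\calA_{\bssigma}))$, which forces $r_{\bssigma} \in H_T$ and, via maximal parabolic regularity for the sectorial operator $\calA_{\bssigma}$, a strong solution $x_{\bssigma} \in L^2(0,T;D(\calA_{\bssigma})) \cap W^{1,2}(0,T;H)$; the bounded feedback perturbation $BB^\ast\Pi_{\bssigma}(T-t) \in \calL(H)$ preserves this regularity for the closed loop. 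I would first prove the identity for such regular data (and, if needed, for Yosida approximations $\calA_{\bssigma}^\lambda$ together with the solutions of the approximating Riccati and $h$-equations, where every step is classical) and then pass to the limit using continuous dependence of $\Pi_{\bssigma}$, $h_{\bssigma}$, and $x_{\bssigma}$ on the data. A secondary point to track carefully is the terminal matching: the identification $\Pi_{\bssigma}(0) = P^\ast P$ and $h_{\bssigma}(T) = 0$ must be reconciled with the terminal penalty $\tfrac12\|P(y_{\bssigma}(T) - g_T)\|_H^2$ in~\eqref{eq:J1} through the transformation $x_{\bssigma} = y_{\bssigma} - g$, which fixes the boundary data of~\eqref{eq:Riccati} and~\eqref{eq:h}.
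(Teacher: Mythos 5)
There is no internal proof to compare against here: the paper states this result without proof, recalling it verbatim from Bensoussan et al.\ (Thm.~7.1, Part~IV, Ch.~1 of \cite{bensoussan2007representation}), with the extra regularity $f\in H_T$, $g\in W^{1,2}(0,T;H)\cap L^2(0,T;D(\calA_{\bssigma}))$ assumed precisely so that the cited theorem applies. Judged on its own merits, your proposal is the classical verification (completion-of-squares) argument and it is correct. I checked the algebra: differentiating $t\mapsto \tfrac12\langle \Pi_{\bssigma}(T-t)x(t),x(t)\rangle_H+\langle h_{\bssigma}(t),x(t)\rangle_H$ along an arbitrary admissible trajectory, the Riccati contribution $-\langle \Pi_{\bssigma}(T-t)\calA_{\bssigma}x,x\rangle$ cancels against the term coming from $\dot x$, the pair $-\langle \calA_{\bssigma}^\ast h_{\bssigma},x\rangle$ and $\langle h_{\bssigma},\calA_{\bssigma}x\rangle$ cancels, the two $\langle \Pi_{\bssigma}(T-t)r_{\bssigma},x\rangle$ terms cancel, and integration over $[0,T]$ with $\Pi_{\bssigma}(0)=P^\ast P$, $h_{\bssigma}(T)=0$, $x(0)=x_\circ$ gives exactly your identity, from which claims 1--3 follow at once. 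Two remarks. First, your opening direct-method step is redundant (though harmless): the identity itself yields existence and uniqueness, since the cost strictly exceeds the control-independent constant unless the square vanishes a.e., and the closed-loop system then has a unique mild solution. Second, the two caveats you flag are the right ones and are where all the real work lies: the differentiation is only formal for mild solutions when $\calA_{\bssigma}\in\calL(V,V')$ (e.g.\ $\Pi_{\bssigma}(T-t)\calA_{\bssigma}x$ is not defined, as $\Pi_{\bssigma}(T-t)$ acts on $H$, not $V'$), and your proposed remedy --- strong solutions via the assumed data regularity, or Yosida approximation of $\calA_{\bssigma}$ together with the approximating Riccati and $h$-equations, followed by passage to the limit --- is the standard rigorous route; and the terminal matching you mention is genuine: the transformation $x_{\bssigma}=y_{\bssigma}-g$ turns the penalty $\tfrac12\|P(y_{\bssigma}(T)-g_T)\|_H^2$ into $\tfrac12\|P(x_{\bssigma}(T)+g(T)-g_T)\|_H^2$, so the stated boundary data $\Pi_{\bssigma}(0)=P^\ast P$, $h_{\bssigma}(T)=0$ implicitly require $g(T)=g_T$ (otherwise $h_{\bssigma}(T)$ acquires an affine correction), a point the paper's statement glosses over as well.
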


We see from~\eqref{eq:uopt} that, for~$t \in [0,T]$, the optimal feedback~$K_{\bssigma}(t,\cdot): H \to U$ in the nonhomogeneous case is an affine function of the optimal state~$x_{\bssigma}(t)$. It is given by
\begin{align}
    K_{\bssigma}(t,\cdot) = -B^\ast \big(\Pi_{\bssigma}(T-t) \cdot + h_{\bssigma}(t)\big). \notag
\end{align}
The parametric regularity of the linear part~$-B^\ast\Pi_{\bssigma}$ is derived in Sect.~\ref{sec:homogeneous}. Thus, it remains to investigate the parametric regularity of~$h_{\bssigma}$.
For this purpose we make the following assumptions:
\begin{align}
    \begin{split}
        D(\calA_{\bssigma}) \text{ is independent of } \bssigma \in \fkS \text{ and } D(\calA_{\bssigma})= D(\calA^\ast_{\bssigma}) \text{ for all } \bssigma \in \fkS, 
    \label{eq:A4}
    \end{split}\\
    \begin{split}
        &\exists\,\text{a sequence of nonnegative numbers } \tilde\bsb \in \ell^p(\mathbb{N}) \text{ with }0<p\le 1\\
        &\text{such that }
        \|\partial_\bssigma^\bsnu \calA_\bssigma\|_{\calL(D(\calA),H)} \le \tilde{\bsb}^\bsnu \quad \forall \bsnu \in \calF \setminus\{\boldsymbol{0}\},
    \end{split}
    \label{eq:A5} \\
    \begin{split}
    \|\calA_{\bssigma}\|_{\calL(D(\calA),H)} \le \widetilde C_\calA \quad \forall \bssigma \in \fkS.
    \label{eq:A6} 
    \end{split}
\end{align}  
In view of~\eqref{eq:A4} we will denote~$D(\calA) = D(\calA_{\bssigma}) = D(\calA^\ast_{\bssigma})$ for all~$\bssigma \in \fkS$.

\begin{proposition}\label{prop:hbound}
    Let the assumptions~\eqref{A1},~\eqref{A2},~\eqref{A3},~\eqref{eq:A4},~\eqref{eq:A5}, and~\eqref{eq:A6} hold, and assume that there exists a sequence~$\tilde \bsb \in \ell^p(\bbN)$, with~$0<p\le1$, of nonnegative numbers, such that~$\|\partial_{\bssigma}^\bsnu \calA_{\bssigma}\|_{\calL(V,V')} \le \tilde \bsb^{\bsnu}$ for all~$\bsnu \in \calF \setminus \{\boldsymbol{0}\}$.
    Then, the solution~$h_{\bssigma}$ of~\eqref{eq:h} satisfies
    \begin{align}
        \|\partial^\bsnu_{\bssigma} h_{\bssigma}\|_{W_T^0(V,V')} \le  \frac{1}{2} (1+C)^{\max\{|\bsnu|-1,0\}}C^{\delta_{\bsnu,\mathbf 0}}(C+C^2)^{1-\delta_{\bsnu,\mathbf 0}}(|\bsnu|+2)! \boldsymbol b^{\bsnu}, \quad \forall \bsnu \in \calF,\notag
    \end{align}
    with~$b_j = \tilde{b}_j/\ln{2}$, for some constant~$C>0$ independent of~$\bssigma \in \fkS$.
\end{proposition}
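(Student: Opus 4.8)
The plan is to differentiate the terminal-value problem \eqref{eq:h} in $\bssigma$ and to run a strong induction on the order $|\bsnu|$. Applying the Leibniz rule to each product in \eqref{eq:h} and peeling off the two top-order contributions (those in which neither $\calA_{\bssigma}^\ast$ nor $\Pi_{\bssigma}$ is differentiated), one sees that $\partial^\bsnu_{\bssigma}h_{\bssigma}$ solves a backward parabolic equation of the same structure,
\[
-\partial^\bsnu_{\bssigma}\dot h_{\bssigma} = \big(\calA_{\bssigma}^\ast-\Pi_{\bssigma}(T-\cdot)BB^\ast\big)\,\partial^\bsnu_{\bssigma}h_{\bssigma} + S_{\bsnu}, \qquad \partial^\bsnu_{\bssigma}h_{\bssigma}(T)=0,
\]
with the source
\begin{align*}
S_{\bsnu} &= \sum_{\boldsymbol 0\neq\bsm\le\bsnu}\binom{\bsnu}{\bsm}\big(\partial^\bsm_{\bssigma}\calA_{\bssigma}^\ast\big)\big(\partial^{\bsnu-\bsm}_{\bssigma}h_{\bssigma}\big) - \sum_{\boldsymbol 0\neq\bsm\le\bsnu}\binom{\bsnu}{\bsm}\big(\partial^\bsm_{\bssigma}\Pi_{\bssigma}(T-\cdot)\big)BB^\ast\big(\partial^{\bsnu-\bsm}_{\bssigma}h_{\bssigma}\big) \\
&\quad + \sum_{\bsm\le\bsnu}\binom{\bsnu}{\bsm}\big(\partial^\bsm_{\bssigma}\Pi_{\bssigma}(T-\cdot)\big)\big(\partial^{\bsnu-\bsm}_{\bssigma}r_{\bssigma}\big).
\end{align*}
Since $f,g,\dot g$ do not depend on $\bssigma$, one has $\partial^{\boldsymbol 0}_{\bssigma}r_{\bssigma}=r_{\bssigma}$ and $\partial^{\bsm}_{\bssigma}r_{\bssigma}=(\partial^{\bsm}_{\bssigma}\calA_{\bssigma})g$ for $\bsm\neq\boldsymbol 0$.

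First I would establish an a priori estimate for this backward equation that is uniform in $\bssigma$. By the case $\bsnu=\boldsymbol 0$ of \eqref{eq:boundPi} we have $\|\Pi_{\bssigma}(\tau)\|_{\calL(H)}\le C_5(T)$ uniformly in $\tau\in[0,T]$ and $\bssigma\in\fkS$, so $\Pi_{\bssigma}(T-\cdot)BB^\ast$ is a bounded, $\bssigma$-uniform perturbation of the $V$-$H$-coercive form associated with $\calA_{\bssigma}$. The same Lions-type argument as in Lemma~\ref{lem:apriori_unc} (reduced to a forward problem by the substitution $t\mapsto T-t$) therefore produces a constant $c_h(T)$, continuous and monotone in $T$ and independent of $\bssigma$, such that the solution of the displayed equation with source $S\in V'_T$ obeys $\|\,\cdot\,\|_{W_T^0(V,V')}\le c_h(T)\,\|S\|_{V'_T}$. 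Each group in $S_{\bsnu}$ indeed takes values in $V'_T$: the first through $\|\partial^\bsm_{\bssigma}\calA_{\bssigma}^\ast\|_{\calL(V,V')}\le\tilde\bsb^\bsm$, and the remaining two through $\partial^\bsm_{\bssigma}\Pi_{\bssigma}\in\calL(H)$ combined with $BB^\ast\in\calL(H)$ and, using \eqref{eq:A5}, \eqref{eq:A6} and $g\in L^2(0,T;D(\calA))$, with $(\partial^\bsm_{\bssigma}\calA_{\bssigma})g\in H$.

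It is convenient to write the claimed bound in the equivalent unified form $\Phi(\bsnu):=\tfrac{C}{2}(1+C)^{|\bsnu|}(|\bsnu|+2)!\,\bsb^\bsnu$ (which reproduces the stated expression after separating $\bsnu=\boldsymbol 0$ from $\bsnu\neq\boldsymbol 0$), and to prove $\|\partial^\bsnu_{\bssigma}h_{\bssigma}\|_{W_T^0(V,V')}\le\Phi(\bsnu)$ by induction. The base case $\bsnu=\boldsymbol 0$ follows from the a priori estimate applied to \eqref{eq:h}, since $\|\Pi_{\bssigma}(T-\cdot)r_{\bssigma}\|_{V'_T}\le c_H C_5(T)\|r_{\bssigma}\|_{H_T}$, so that $\|h_{\bssigma}\|_{W_T^0(V,V')}\le\Phi(\boldsymbol 0)=C$ provided $C$ is large. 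For the inductive step I would insert into $S_{\bsnu}$ the bound $\|\partial^\bsm_{\bssigma}\calA_{\bssigma}^\ast\|_{\calL(V,V')}\le\tilde\bsb^\bsm=(\ln 2)^{|\bsm|}\bsb^\bsm$, the Riccati estimate $\|\partial^\bsm_{\bssigma}\Pi_{\bssigma}\|_{\calL(H)}\le C_5(T)(|\bsm|+1)!\,\bsb^\bsm$ from \eqref{eq:boundPi}, the forcing bound $\|\partial^\bsm_{\bssigma}r_{\bssigma}\|_{H_T}\le\|g\|_{L^2(0,T;D(\calA))}\,\tilde\bsb^\bsm$ for $\bsm\neq\boldsymbol 0$, and the induction hypothesis $\Phi(\bsnu-\bsm)$. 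Every sum then factors an overall $\bsb^\bsnu$, and the Vandermonde identity $\sum_{|\bsm|=k,\,\bsm\le\bsnu}\binom{\bsnu}{\bsm}=\binom{|\bsnu|}{k}$ collapses each to a one-dimensional sum in $k=|\bsm|$. Dividing by $\Phi(\bsnu)$ and using $\binom{|\bsnu|}{k}\tfrac{(|\bsnu|-k+2)!}{(|\bsnu|+2)!}\le\tfrac1{k!}$, the first group and the forcing group — both carrying a factor $(\ln 2)^k$ from the $\calA_{\bssigma}$-derivatives — are bounded by $\sum_{k\ge1}\tfrac{(\ln 2)^k}{k!}(1+C)^{-k}\le \ex^{\ln 2}-1=1$, which is exactly the mechanism behind Theorem~\ref{thm:solanalytic} and the choice $b_j=\tilde b_j/\ln 2$.

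The main obstacle is the middle group of $S_{\bsnu}$, where \emph{both} $\Pi_{\bssigma}$ and $h_{\bssigma}$ are differentiated: its $\Pi_{\bssigma}$-bound already carries $\bsb^\bsm$ rather than $\tilde\bsb^\bsm$, so no $(\ln 2)^k$ damping is available and one is left with the bare factorial product $(|\bsm|+1)!\,(|\bsnu-\bsm|+2)!$. The point is that, after dividing by $\Phi(\bsnu)$ and using $\binom{|\bsnu|}{k}(k+1)!\tfrac{(|\bsnu|-k+2)!}{(|\bsnu|+2)!}\le k+1$, this group is controlled by $\|BB^\ast\|_{\calL(H)}C_5(T)\sum_{k\ge1}(k+1)(1+C)^{-k}$, a convergent series whose value tends to $0$ as $C\to\infty$, uniformly in $\bsnu$; here the geometric factor $(1+C)^{-k}$ comes precisely from the self-referential structure $\Phi(\bsnu-\bsm)\!\sim\!(1+C)^{|\bsnu|-k}$, which is why the estimate must carry the self-improving factor $(1+C)^{|\bsnu|}$ in the first place. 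Choosing $C$ large enough that $c_h(T)$ times the sum of these three relative bounds is $\le 1$, and simultaneously large enough for the base case, closes the induction and yields $\|\partial^\bsnu_{\bssigma}h_{\bssigma}\|_{W_T^0(V,V')}\le\Phi(\bsnu)$, i.e. the stated bound, with $C$ independent of $\bssigma$.
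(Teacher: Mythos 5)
Your proposal is correct, and its skeleton coincides with the paper's proof: you differentiate \eqref{eq:h} by the Leibniz rule so that $\partial^\bsnu_{\bssigma}h_{\bssigma}$ solves the same backward evolution equation (the paper phrases this as the operator identity \eqref{eq:Dbsnuh} for $D_{\bssigma}=-\frac{\rmd}{\rmd t}-(\calA^\ast_{\bssigma}-\Pi_{\bssigma}(T-\cdot)BB^\ast)$, whose right-hand side is exactly your source $S_{\bsnu}$), you obtain a $\bssigma$-uniform bound on the inverse of this backward operator by the argument of Lemma~\ref{lem:apriori_unc} together with the $\bsnu=\boldsymbol{0}$ case of \eqref{eq:boundPi}, and you run an induction on $|\bsnu|$. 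The genuine difference lies in how the recursive inequality is resolved. The paper coarsens first: it merges your first two groups into the single estimate $\|\partial^{\bsm}_{\bssigma}D_{\bssigma}\|_{\calL(W^0_T(V,V'),V'_T)}\le \hat{C}\,(|\bsm|+2)!\,\bsb^{\bsm}$ (see \eqref{eq:partialD}) and bounds the forcing group by $\hat{C}\,(|\bsnu|+2)!\,\bsb^{\bsnu}$ (see \eqref{eq:partialPir}), which yields a clean recurrence that is then solved by the standalone combinatorial Lemma~\ref{lem:recursion} in Appendix~B; that lemma holds for \emph{every} value of the constant, so the paper's conclusion carries the natural constant $C=2\hat{C}$ coming out of the estimates. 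You instead keep the three groups separate, exploit the damping $\tilde{\bsb}^{\bsm}=(\ln 2)^{|\bsm|}\bsb^{\bsm}$ where it is available, and close the induction with the ansatz $\Phi(\bsnu)=\tfrac{C}{2}(1+C)^{|\bsnu|}(|\bsnu|+2)!\,\bsb^{\bsnu}$ (correctly identified as identical to the stated bound) under a largeness condition on $C$; your two factorial inequalities are both valid, and since the proposition only asserts the bound ``for some $C>0$'', the large-$C$ mechanism is legitimate. In short, the paper's route buys a reusable recursion lemma and the natural constant; yours avoids the auxiliary lemma at the cost of taking $C$ large. Two minor imprecisions in your write-up: (i) you never address the \emph{existence} of $\partial^\bsnu_{\bssigma}h_{\bssigma}$, which the paper deduces inductively from the same recurrence before estimating it; (ii) in the forcing group the per-term damping $(1+C)^{-k}$ you quote is not actually present (that group does not involve the induction hypothesis, and it also contains the $\bsm=\bsnu$ term with $r_{\bssigma}$ undifferentiated) --- its relative contribution is instead controlled by the overall prefactor $C^{-1}(1+C)^{-|\bsnu|}$ against $\Phi(\bsnu)$, which still tends to $0$ as $C\to\infty$ uniformly in $\bsnu$, so the argument closes as claimed.
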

\begin{proof}
Let us define~$W^0_{T}(V,V') := \{w \in W_T(V,V') \mid w(T) = 0\}$ and the parametric evolution operator~$D_{\bssigma} = -\frac{d}{dt} - (\calA_{\bssigma}^\ast - \Pi_{\bssigma}(T-t) B B^\ast)$ mapping from $W^0_{T}(V,V')$ to $V'_T$, so that we can write~\eqref{eq:h} as
\begin{align}\label{eq:h2}
    D_{\bssigma} h_{\bssigma} = \Pi_{\bssigma} r_{\bssigma}.
\end{align}
Following the arguments in the proof of~ Lemma~\ref{lem:apriori_unc} and using the uniform bound on~$\Pi_{\bssigma}$ (\eqref{eq:boundPi} with~$\bsnu=\boldsymbol{0}$), it can be shown that there is a constant~$c_D>0$ such that~$\|D_{\bssigma}^{-1}\|_{\calL(V_T', W^0_{T}(V,V'))}\le c_D$ for all~$\bssigma \in \fkS$. Moreover, for arbitrary~$w\in W^0_{T}(V,V')$ and for all~$\bsnu \in \calF \setminus \{\boldsymbol{0}\}$, we find
\begin{align}
    \|\partial^\bsnu_{\bssigma} D_{\bssigma} w\|_{V_T'} &= \|\partial^\bsnu_{\bssigma} \calA^\ast_{\bssigma} w - \partial^\bsnu_{\bssigma} \Pi_{\bssigma} BB^\ast w \|_{V_T'} \notag\\
    &\le\big( \|\partial^\bsnu_{\bssigma} \calA^\ast_{\bssigma}\|_{\calL(V,V')} + c_Hc_V\|\partial^\bsnu_{\bssigma} \Pi_{\bssigma} BB^\ast\|_{\calL(H)}\big) \|w \|_{W^0_{T}(V,V')}, \notag
\end{align}
where~$c_H$ and~$c_V$ are the embedding constants~$\|\cdot\|_{V'} \le c_H \|\cdot\|_H$ and~$\|\cdot\|_{H} \le c_V \|\cdot\|_V$, respectively. Recalling that~$\|\partial_{\bssigma}^\bsnu \calA_{\bssigma}\|_{\calL(V,V')} \le \tilde \bsb^{\bsnu}$ for all~$\bsnu \in \calF \setminus \{\boldsymbol{0}\}$, and by~\eqref{eq:boundPi} and the fact that~$\tilde b_j \le b_j$,~$j\ge1$, we have
\begin{align}
    \|\partial^\bsnu_{\bssigma} D_{\bssigma}\|_{\calL(W_T^0(V,V'),V_T')} &\le C \big( \tilde\bsb^\bsnu + (|\bsnu|+1)! \bsb^\bsnu \big) = C (|\bsnu|+2)! \bsb^\bsnu,\label{eq:partialD}
\end{align}
for some~$C>0$ independent of~$\bssigma \in \fkS$. Thus,~$D_{\bssigma}$ is a~$p$-analytic operator. However, in order to deduce the parametric regularity of the solution~$h_{\bssigma}$ of~\eqref{eq:h2}, we cannot directly apply Theorem~\ref{thm:solanalytic} since the right-hand side in~\eqref{eq:h2} depends on~$\bssigma \in \fkS$.

Instead, we will prove the result by induction on~$|\bsnu|$. For the base case,~$\bsnu = \boldsymbol{0}$, we estimate the right-hand side of~\eqref{eq:h2}: by~\eqref{eq:A6} we have for~$\bsnu=\boldsymbol{0}$ that~$\|\partial_{\bssigma}^\bsnu r_{\bssigma}\|_{H_T} \le \|f\|_{H_T} + \widetilde C_\calA \|g\|_{H_T} + \|\dot{g}\|_{H_T}$ for all~$\bssigma \in \fkS$. Together with the uniform boundedness of~$\Pi_{\bssigma}$ and~$D_{\bssigma}^{-1}$, the base step~$\|h_{\bssigma}\|_{W^0_T(V,V')} \le C$ follows. For the induction step let~$\bsnu \in \calF \setminus \{\boldsymbol{0}\}$.
By~\eqref{eq:A5} we have that~$\|\partial_{\bssigma}^\bsnu r_{\bssigma}\|_{H_T} = \|\partial_{\bssigma}^\bsnu \calA_{\bssigma} g\|_{H_T} \le \|\partial_{\bssigma}^\bsnu \calA_{\bssigma}\|_{\calL(D(\calA),H)} \|g\|_{H_T} \le  \|g\|_{H_T} \tilde\bsb^\bsnu$, and thus 
\begin{align}
\|\partial_{\bssigma}^\bsnu r_{\bssigma}\|_{H_T} \le C_6 \tilde\bsb^\bsnu, \notag
\end{align}
for some constant~$C_6>0$ independent of~$\bssigma\in \fkS$. In the following, let~$\bsnu \in \calF$. Then, by~\eqref{eq:boundPi} and the Leibniz product rule we have
\begin{align}
    \|\partial^\bsnu_{\bssigma} \Pi_{\bssigma}(T-t) r_{\bssigma}(t)\|_{H_T} &= \Big\| \sum_{\bsm \le \bsnu} \binom{\bsnu}{\bsm} \partial^\bsm \Pi_{\bssigma}(T-t) \partial^{\bsnu-\bsm} r_{\bssigma}(t)\Big\|_{H_T} \notag\\
    &\le C_5(T)C_6 \sum_{\bsm \le \bsnu} \binom{\bsnu}{\bsm} (|\bsm|+1)! \bsb^\bsm \tilde\bsb^{\bsnu-\bsm}\notag\\
    &= C_7(T) \bsb^\bsnu \sum_{\ell = 0}^{|\bsnu|} (\ell+1)!  \frac{|\bsnu|!}{\ell! (|\bsnu|-\ell)!} \le C_7(T) \bsb^\bsnu (|\bsnu|+2)!,\label{eq:partialPir}
\end{align}
where~$C_7(T) = C_5(T)C_6$ and~$\tilde b_j \le b_j$,~$j\ge 1$.

Applying~$\partial^\bsnu_{\bssigma}$ to~\eqref{eq:h2} leads by the Leibniz product rule to
\begin{align}
    \sum_{\bsm \le \bsnu} \binom{\bsnu}{\bsm} \partial_{\bssigma}^{\bsm} D_{\bssigma} \partial_{\bssigma}^{\bsnu-\bsm} h_{\bssigma} = \sum_{\bsm \le \bsnu} \binom{\bsnu}{\bsm} \partial^\bsm \Pi_{\bssigma}(T-t) \partial^{\bsnu-\bsm} r_{\bssigma}(t),\notag
\end{align}
provided that the derivatives of~$h_{\bssigma}$ exist. Their existence can be shown as follows: separating out the~$\bsm = \boldsymbol{0}$ term on the left-hand side leads to the recurrence relation
\begin{align}\label{eq:Dbsnuh}
    D_{\bssigma} \partial^\bsnu_{\bssigma} h_{\bssigma} = - \sum_{\substack{\bsm \le \bsnu \\\bsm\ne \boldsymbol{0}}} \binom{\bsnu}{\bsm} \partial_{\bssigma}^{\bsm} D_{\bssigma} \partial_{\bssigma}^{\bsnu-\bsm} h_{\bssigma} + \sum_{\bsm \le \bsnu} \binom{\bsnu}{\bsm} \partial^\bsm \Pi_{\bssigma}(T-t) \partial^{\bsnu-\bsm} r_{\bssigma}(t).
\end{align}
The existence of~$\partial^\bsnu_{\bssigma} h_{\bssigma}$ follows from~\eqref{eq:Dbsnuh} by induction on~$|\bsnu|$: for~$\bsnu= \boldsymbol{0}$ it follows from~\eqref{eq:h2}. Then, with~\eqref{eq:partialD} and~\eqref{eq:partialPir} the existence of~$\partial^\bsnu_{\bssigma} h_{\bssigma}$ follows from~\eqref{eq:Dbsnuh} for any~$\bsnu \in \calF$ assuming that it has been shown for all multi-indices with order less than~$|\bsnu|$. 
 
Furthermore,~\eqref{eq:Dbsnuh} leads to 
\begin{align}
    \|\partial^\bsnu_{\bssigma} h_{\bssigma}\|_{W^0_{T}(V,V')} &\le \sum_{\substack{\bsm \le \bsnu \\\bsm\ne \boldsymbol{0}}} \binom{\bsnu}{\bsm} \|D_{\bssigma}^{-1} \partial_{\bssigma}^{\bsm} D_{\bssigma} \partial_{\bssigma}^{\bsnu-\bsm} h_{\bssigma}\|_{W^0_T(V,V')}\notag\\&\quad + \sum_{\bsm \le \bsnu} \binom{\bsnu}{\bsm} \|D_{\bssigma}^{-1} \partial^\bsm \Pi_{\bssigma}(T-t) \partial^{\bsnu-\bsm} r_{\bssigma}(t)\|_{W^0_T(V,V')}\notag\\
    &\le \hat C \sum_{\substack{\bsm \le \bsnu \\\bsm\ne \boldsymbol{0}}} \binom{\bsnu}{\bsm} (|\bsm|+2)! \bsb^{\bsm} \|\partial^{\bsnu-\bsm}_{\bssigma} h_{\bssigma}\|_{W^0_T(V,V')} + \hat C (|\bsnu|+2)! \bsb^\bsnu,\notag
\end{align}
for some~$\hat C>0$ independent of~$\bssigma \in \fkS$. Then, setting~$C = 2 \hat C $, by Lemma~\ref{lem:recursion} in Appendix B we have
\begin{align}
    \|\partial^\bsnu_{\bssigma} h_{\bssigma}\|_{W_T^0(V,V')} \le \frac{1}{2} (1+C)^{\max\{|\bsnu|-1,0\}}C^{\delta_{\bsnu,\mathbf 0}}(C+C^2)^{1-\delta_{\bsnu,\mathbf 0}}(|\bsnu|+2)! \boldsymbol b^{\bsnu}, \notag
\end{align}
which proves the desired result for all~$\bsnu \in \calF$.
\end{proof}

The regularity results for the feedback are summarized in the following theorem. The result directly follows by combining the embedding~$\max_{t \in [0,T]} \|\partial^\bsnu_{\bssigma} h_{\bssigma}(t)\|_H \le C(T)\|\partial^\bsnu_{\bssigma} h_{\bssigma}\|_{W_T(V,V')}$ with Proposition~\ref{prop:hbound} and Theorem~\ref{thm:fbhomog}.
\begin{theorem}\label{thm:combreg}
    Under the conditions of Proposition~\ref{prop:hbound} we have
    \begin{align*}
        \sup_{t \in [0,T]} \left( \|\partial^\bsnu_{\bssigma} (-B^\ast \Pi_{\bssigma}(T-t))\|_{\calL(H,U)} + \|\partial^\bsnu_{\bssigma} (-B^\ast h_{\bssigma}(t))\|_U \right) \le \|B\|_{\calL(H,U)} \,\widetilde{C} \,(|\bsnu|+2)!\, \bsb^\bsnu ,
    \end{align*}
    for all~$\bsnu \in \calF$ and a constant~$\widetilde{C}>0$ depending on~$T>0$, but independent of~$\bssigma \in \fkS$.
\end{theorem}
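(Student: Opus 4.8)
The plan is to derive the two summands in the supremum separately and then add them, since the statement is a direct consequence of Theorem~\ref{thm:fbhomog}, Proposition~\ref{prop:hbound}, and the Bochner--Sobolev embedding $W_T(V,V')\hookrightarrow\calC([0,T];H)$. First I would use the triangle inequality so that the supremum over $t\in[0,T]$ distributes over the two terms $\|\partial^\bsnu_{\bssigma}(-B^\ast\Pi_{\bssigma}(T-t))\|_{\calL(H,U)}$ and $\|\partial^\bsnu_{\bssigma}(-B^\ast h_{\bssigma}(t))\|_U$, which may then be treated independently.

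For the linear part no real work is needed: Theorem~\ref{thm:fbhomog} already supplies, uniformly in $t\in[0,T]$,
\[
\|\partial^\bsnu_{\bssigma}(-B^\ast\Pi_{\bssigma}(T-t))\|_{\calL(H,U)}\le \|B\|_{\calL(U,H)}\,C_5(T)\,(|\bsnu|+1)!\,\bsb^\bsnu,
\]
and since $(|\bsnu|+1)!\le(|\bsnu|+2)!$ and $\|B\|_{\calL(U,H)}=\|B^\ast\|_{\calL(H,U)}$, this is already of the asserted shape. For the affine part I would pull $B^\ast$ out via $\|B^\ast v\|_U\le\|B\|_{\calL(U,H)}\|v\|_H$, reducing matters to a bound on $\sup_{t}\|\partial^\bsnu_{\bssigma}h_{\bssigma}(t)\|_H$. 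Because $h_{\bssigma}(T)=0$ for every $\bssigma\in\fkS$, differentiation yields $\partial^\bsnu_{\bssigma}h_{\bssigma}(T)=0$, so $\partial^\bsnu_{\bssigma}h_{\bssigma}\in W_T^0(V,V')$ and the embedding constant $C(T)$ of $\max_{t}\|\cdot(t)\|_H\le C(T)\|\cdot\|_{W_T(V,V')}$ applies. Feeding in Proposition~\ref{prop:hbound} then produces a bound of the form
\[
\sup_{t}\|\partial^\bsnu_{\bssigma}h_{\bssigma}(t)\|_H\le \tfrac12 C(T)\,(1+C)^{\max\{|\bsnu|-1,0\}}\,C^{\delta_{\bsnu,\boldsymbol{0}}}(C+C^2)^{1-\delta_{\bsnu,\boldsymbol{0}}}\,(|\bsnu|+2)!\,\bsb^\bsnu,
\]
in which the factor $C^{\delta_{\bsnu,\boldsymbol{0}}}(C+C^2)^{1-\delta_{\bsnu,\boldsymbol{0}}}$ is $\le C+C^2$ for every $\bsnu$ and hence harmless.

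The only point that is not purely mechanical is the geometric prefactor $(1+C)^{\max\{|\bsnu|-1,0\}}$, which grows with $|\bsnu|$ and therefore cannot be swallowed by the constant $\widetilde C$. The resolution—and the one genuine step—is to absorb it into the weight sequence: writing $(1+C)^{|\bsnu|}=\prod_{j\ge1}(1+C)^{\nu_j}$ one gets $(1+C)^{\max\{|\bsnu|-1,0\}}\bsb^\bsnu\le\hat{\boldsymbol{b}}^{\bsnu}$ with the rescaled sequence $\hat b_j:=(1+C)\,b_j$. Since $\ell^p$-membership is invariant under multiplication by a constant, $\hat{\boldsymbol{b}}\in\ell^p(\bbN)$ as well, so $\hat{\boldsymbol{b}}$ is an admissible weight sequence of exactly the same type, and it is this $\hat{\boldsymbol{b}}$ that plays the role of $\bsb$ in the statement; as $b_j\le\hat b_j$, the linear-part bound of the previous paragraph also holds verbatim with $\hat{\boldsymbol{b}}$.

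Collecting the two contributions and setting $\widetilde C:=C_5(T)+\tfrac12 C(T)(C+C^2)$ yields the claim, with $\widetilde C$ depending on $T$ (through $c_\calG(T)$, the embedding constant $C(T)$, and the constant $C$ from Proposition~\ref{prop:hbound}) but independent of $\bssigma\in\fkS$. I expect this rescaling observation to be the sole subtlety; the remaining ingredients—the triangle inequality, the operator-norm estimate for $B^\ast$, and the continuous embedding into $\calC([0,T];H)$—are entirely routine.
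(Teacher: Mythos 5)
Your proposal is correct and follows essentially the same route as the paper, whose entire argument for this theorem is the one-line remark that the result ``directly follows by combining the embedding $\max_{t \in [0,T]} \|\partial^\bsnu_{\bssigma} h_{\bssigma}(t)\|_H \le C(T)\|\partial^\bsnu_{\bssigma} h_{\bssigma}\|_{W_T(V,V')}$ with Proposition~\ref{prop:hbound} and Theorem~\ref{thm:fbhomog}'': triangle inequality, Theorem~\ref{thm:fbhomog} for the linear part (with $(|\bsnu|+1)!\le(|\bsnu|+2)!$), and the embedding plus Proposition~\ref{prop:hbound} for $h_{\bssigma}$.

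Where you go beyond the paper is the treatment of the prefactor $(1+C)^{\max\{|\bsnu|-1,0\}}$ coming out of Proposition~\ref{prop:hbound}. You are right that this factor grows geometrically in $|\bsnu|$ and therefore cannot be hidden in a constant $\widetilde{C}$ independent of $\bsnu$; the paper's sketch is silent on this point, so read literally (with the same sequence $\bsb$ as in Proposition~\ref{prop:hbound}) the stated bound does not follow from that proposition without an adjustment. Your fix --- absorbing the factor into the rescaled weights $\hat b_j=(1+C)b_j$, noting that $\ell^p$-membership is preserved and that the $\Pi$-bound holds a fortiori for $\hat{\bsb}$ since $b_j\le\hat b_j$ --- is exactly the right (and apparently intended) reading, and it is harmless for the downstream QMC analysis, which only requires \emph{some} $\ell^p$ weight sequence of this form. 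In this respect your write-up is more complete than the paper's own justification; the remaining steps (operator-norm estimate for $B^\ast$, the case distinction $C^{\delta_{\bsnu,\boldsymbol{0}}}(C+C^2)^{1-\delta_{\bsnu,\boldsymbol{0}}}\le C+C^2$, and the final choice of $\widetilde C$) match the paper's intent.
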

This regularity result is used in Sect.~\ref{sec:QMC} for the error analysis of the QMC approximation of the mean-based feedback~\eqref{eq:FBgoal}.

\section{QMC approximation of the feedback $K$}
\label{sec:QMC}

In numerical practice~\eqref{eq:FBgoal} signals the necessity of approximately evaluating integrals of the form
\begin{align*}
\int_{[-1/2,1/2]^{\bbN}} \scrK_{\bssigma}(t) \rd \bssigma,
\end{align*}
for fixed $t$, where
\begin{align*}
    \scrK_{\bssigma}(t) = -B^\ast \Pi_{\bssigma}    \in Z = \calL(H,U), \qquad
    \text{or} \qquad
    \scrK_{\bssigma}(t) = -B^\ast h_{\bssigma} \in Z= U.
\end{align*}

For this purpose, in the following section we investigate the potential of QMC methods for the feedback optimal control problem of minimizing~\eqref{eq:J1} subject to~\eqref{eq:psys}.

The QMC approximation of integrals with Banach space-valued integrands was first studied in~\cite{guth2022parabolic}, where the error analysis is presented for randomly shifted lattice rules. Higher-order QMC methods have been studied in~\cite{LongoSchwabStein} in a Hilbert space setting. In order to use higher-order QMC rules for the approximation of integrals over the feedback law, we provide a novel error analysis for higher-order QMC rules in separable Banach spaces.

Recall that the state space~$H$ is a separable Hilbert space.
This separability assumption allows to guarantee that~$\calL(H,U)$ is a separable Banach space~\cite[Prop.~3.13]{vomende}, since in our case~$U$ is chosen to be finite-dimensional. Furthermore, by Theorem~\ref{thm:combreg} the mapping~$\bssigma \mapsto \scrK_{\bssigma}$ is continuous. Thus,~$\bssigma \mapsto G(\scrK_{\bssigma})$ is continuous for all~$G\in Z'$, and in particular it is measurable. By Pettis' Theorem ~$\scrK_{\bssigma}$ is strongly measurable. Moreover, the upper bound in Thm.~\ref{thm:combreg} implies with Bochner's Theorem that~$\scrK_{\bssigma}$ is integrable over~$\fkS = [-1/2,1/2]^{\mathbb{N}}$.

Our strategy for approximation is to truncate the domain of $\bssigma$ to the finite-dimensional domain $[-1/2,1/2]^s$ first, and then, after some transformations, apply a quasi-Monte Carlo (QMC) rule to the truncated integral. 
Indeed, for $s\in\{1,\ldots,d\}$, we write $(\bssigma_s,\bszero):=(\sigma_1,\ldots,\sigma_s,0,0,\ldots)$, and $\scrK_{\bssigma, s}:=\scrK_{\bssigma} ((\bssigma_s,\bszero))$. 
With this strategy, the total approximation error consists of:
\begin{align*}
    \underbrace{\left\| \int_{[-1/2,1/2]^{\bbN}} (\scrK_{\bssigma}(t)-\scrK_{\bssigma,s}(t)) \rd \bssigma \right\|_{Z}}_{\text{dimension truncation error}}
    & + \underbrace{\left\|\int_{[-1/2,1/2]^{s}} \scrK_{\bssigma,s}(t) \rd \bssigma - \frac{1}{N} \sum_{k= 0}^{N-1} \scrK_{\bsx_k} \right\|_Z}_{\text{QMC error}},
\end{align*}
where the points~$\bsx_0,\ldots,\bsx_{N-1}\in [-1/2,1/2]^s$ are QMC points chosen as described in the remainder of this section. To bound the dimension truncation error let us assume that, for a.e.~$\bssigma \in [-1/2,1/2]^\bbN$ and for all~$t \in [0,T]$, we have that
\begin{equation}\label{eq:limit_K_s}
\|\scrK_{\bssigma}(t)-\scrK_{\bssigma,s}(t) \|_{Z}\to 0 \quad \mbox{as}\quad s\to\infty.
\end{equation}
This assumption is addressed in Appendix~\ref{sec:appendixdimtrunc}.

Due to Theorem~\ref{thm:combreg} and \eqref{eq:limit_K_s}, we can apply \cite[Thm. 4.3]{GuthKaa23}, which yields 
\[
 \left\| \int_{[-1/2,1/2]^{\bbN}} (\scrK_{\bssigma}(t)-\scrK_{\bssigma,s}(t)) \rd \bssigma \right\|_{Z}
 \le C\, s^{-\frac{2}{p}+1},
\]
where $C$ is a positive constant independent of $s$. 
The remainder of this section is devoted to the development of QMC rules to approximate
\begin{equation*}
\int_{[-1/2,1/2]^s} \scrK_{\bssigma,s}(t) \rd \bssigma_s,
\end{equation*}
where~$\rd \bssigma_s = \bigotimes_{j = 1}^s \rd \sigma_j$ is a ``truncated'' version of~$\rd \bssigma$.

Next, we shall use a result that was essentially shown in \cite[Thm. 6.5]{guth2022parabolic}. We state this result for 
completeness.

\begin{theorem}\label{thm:GKKSS_Banach}
Let $\calW_s$ be a Banach space of functions $F:[-1/2,1/2]^s\to \bbR$, which is 
continuously embedded in the space of continuous functions. Consider an $N$-point QMC rule 
with integration nodes $\bsx_0, \ldots,\bsx_{N-1} \in [-1/2,1/2]^s$, given by
\[
  Q_{N,s}(F):= \frac{1}{N}\sum_{k=0}^{N-1} F(\bsx_k).
\]
Furthermore, we define the worst case error of integration using $Q_{N,s}$ in $\calW_s$ by
\[
  e^{\rm wor} (Q_{N,s}, \calW_s) := \sup_{\substack{F\in \calW_s\\ \| F\|_{\calW_s} \le 1}} 
  \left|\int_{[-1/2,1/2]^s} F(\bsx) \rd \bsx - Q_{N,s} (F)\right|.
\]
Assume that $Z$ is a separable Banach space and that $Z'$ is its dual space. Moreover, let $k$ be a continuous mapping that maps any $\bssigma \in [-1/2,1/2]^s$ to some $k(\bssigma)\in Z$. Then,
\[
 \left\|\int_{[-1/2,1/2]^s} k(\bssigma)\rd\bssigma - 
 \frac{1}{N}\sum_{k=0}^{N-1} k(\bsx_k)\right\|_Z \le 
 e^{\rm wor} (Q_{N,s}, \calW_s) \, \sup_{\substack{G\in Z'\\ \| G \|_{Z'}\le 1}} \| G(k) \|_{\calW_s} .
\]
\end{theorem}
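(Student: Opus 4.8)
The plan is to reduce the Banach-space-valued quadrature error to a family of scalar quadrature errors by testing against functionals in $Z'$, and then to apply the definition of the worst-case error to each resulting scalar problem. I would write
\[
 \calE := \int_{[-1/2,1/2]^s} k(\bssigma)\rd\bssigma - \frac{1}{N}\sum_{j=0}^{N-1} k(\bsx_j) \in Z
\]
for the error element (relabelling the summation index to avoid a clash with the map $k$). Since $Z$ is a Banach space, the Hahn--Banach theorem supplies the isometric characterisation $\|\calE\|_Z = \sup_{\|G\|_{Z'}\le 1} |G(\calE)|$, so it suffices to bound $|G(\calE)|$ uniformly over the unit ball of $Z'$.

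For each fixed $G\in Z'$ with $\|G\|_{Z'}\le 1$, the idea is to pull $G$ through both the integral and the finite sum. The finite sum poses no difficulty by linearity; for the integral one invokes that $G$ is a bounded linear functional together with the Bochner integrability of $k$, which permits the interchange $G(\int k\,\rd\bssigma) = \int G(k(\bssigma))\rd\bssigma$. Introducing the scalar integrand $F_G := G\circ k\colon[-1/2,1/2]^s\to\bbR$, which is continuous as a composition of the continuous map $k$ with the continuous functional $G$, one obtains
\[
 G(\calE) = \int_{[-1/2,1/2]^s} F_G(\bssigma)\rd\bssigma - Q_{N,s}(F_G).
\]
Applying the definition of $e^{\rm wor}(Q_{N,s},\calW_s)$ to the normalised function $F_G/\|F_G\|_{\calW_s}$ (the case $F_G=0$ being trivial) then yields $|G(\calE)| \le e^{\rm wor}(Q_{N,s},\calW_s)\,\|F_G\|_{\calW_s} = e^{\rm wor}(Q_{N,s},\calW_s)\,\|G(k)\|_{\calW_s}$.

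Taking the supremum over all $G$ in the unit ball of $Z'$ and combining with the Hahn--Banach characterisation of $\|\calE\|_Z$ gives exactly the claimed inequality. The main technical point to verify is the interchange of $G$ with the Bochner integral: this requires $k$ to be Bochner integrable, which follows from the hypotheses, since $k$ is continuous on the compact cube $[-1/2,1/2]^s$ and hence bounded, while separability of $Z$ ensures, via Pettis' theorem, that continuity implies strong measurability (exactly as argued for $\scrK_{\bssigma}$ preceding the statement). A secondary point is that the estimate is only informative when $F_G\in\calW_s$ for every $G$, i.e.\ when the right-hand supremum $\sup_{\|G\|_{Z'}\le1}\|G(k)\|_{\calW_s}$ is finite; otherwise it holds vacuously. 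Everything else is a routine application of the duality norm and the definition of the worst-case error.
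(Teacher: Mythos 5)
Your argument is correct and is essentially the argument behind the paper's version of this result: the paper itself does not reprint a proof but cites \cite[Thm.~6.5]{guth2022parabolic}, whose proof is exactly this duality scheme (Hahn--Banach characterisation of $\|\cdot\|_Z$, interchange of the functional $G$ with the Bochner integral and the finite sum, then the worst-case error bound applied to the scalar function $G\circ k$). Your handling of the two technical points---Bochner integrability via Pettis' theorem and boundedness on the compact cube, and the vacuous validity of the bound when $G(k)\notin\calW_s$---matches what is needed, so there is nothing to add.
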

If we choose $k=\scrK_{\sigma,s}(t)$, and $Z\in \{U,\calL(H,U)\}$ in Theorem \ref{thm:GKKSS_Banach}, we can apply this result to derive error bounds for approximating 
\[
 \int_{[-1/2,1/2]^{s}} \scrK_{\bssigma,s}(t) \rd \bssigma .
\]
To this end, we make a particular choice of the space $\calW_s$ in Theorem \ref{thm:GKKSS_Banach}. Actually, as we will show, there are different ways of choosing $\calW_s$ and a suitable QMC rule $Q_{N,s}$ tailored to $\calW_s$, leading to different variants of error estimates. We go through several cases separately below, but before would like to explain the general idea of \textit{weighted function spaces} as introduced by Sloan and Wo\'{z}niakowski in \cite{slowo1998weights}. 

The motivation for the use of weighted spaces results from applications, where different coordinates or different groups of 
coordinates may have different influence on a multivariate problem. In our case, this is reflected in the upper bound in Theorem~\ref{thm:combreg}; indeed, in this bound the terms $\bsb^\bsnu$ differ, depending on which $\partial_{\bssigma}^\bsnu h_{\bssigma}$ is considered. Consequently, as we will see below (see~\eqref{eq:spod_weights} and~\eqref{eq:pod_weights}), these terms will have a crucial role in the choice of our \textit{weights}, which are nonnegative real numbers $\gamma_{\fraku}$, one for each set $\fraku \subseteq \{1,\ldots,s\}$. Intuitively speaking, the number $\gamma_{\fraku}$ models the influence 
of the variables with indices in $\fraku$. Larger values of $\gamma_{\fraku}$ mean more influence, smaller values less influence. Formally, we
set $\gamma_{\emptyset}=1$, and we write $\bsgamma=\{\gamma_{\fraku}\}_{\fraku\subseteq \{1,\ldots, s  \}}$. These weights are included in the norms of the function spaces, which are, in this context so-called unanchored weighted Sobolev spaces. We will follow a strategy first outlined in~\cite{DKLNS14}, where we essentially aim to choose the weights in a way that the norms of the elements of the function space are bounded by a constant and the worst-case error of integration is minimized. This yields the particular choices ~\eqref{eq:spod_weights} and~\eqref{eq:pod_weights}, respectively. As it turns out, in this way the problems under consideration in this manuscript may even lose the curse of dimensionality, provided that suitable conditions on the parameters involved hold. We refer to the sections below for details.

The aforementioned unanchored weighted Sobolev spaces of smoothness $\alpha$ are defined as follows.

Let~$\alpha \in \bbN$, $1\le r\le \infty$, and let~$1\le q \le \infty$. We may choose~$\calW_s$ in Theorem \ref{thm:GKKSS_Banach} as the Sobolev space $\calW_{s,\alpha,\bsgamma,q,r}$, 
with positive coordinate weights $\bsgamma=(\gamma_{\fraku})_{\fraku\subseteq \{1,\ldots,s\}}$, and with norm
\begin{eqnarray*}
\lefteqn{\|F\|_{\calW_{s,\bsalpha,\bsgamma,q,r}}^r :=
\sum_{\fraku\subseteq \{1,\ldots,s\}} \bigg( \frac{1}{\gamma_{\fraku}^q} \sum_{\frak{v}\subseteq \fraku} \sum_{\boldsymbol{\tau}_{\fraku \setminus \frak{v}} \in \{1,\ldots,\alpha\}^{\mid \fraku \setminus \frak{v}\mid}}}\\
&\quad&\int_{[-1/2,1/2]^{|\frak{v}|}} 
\bigg| \int_{[-1/2,1/2]^{s-|\frak{v}|}} 
\frac{\partial^{(\bsalpha_{\frak{v}},\boldsymbol{\tau}_{\fraku \setminus \frak{v}},\boldsymbol{0})}}{\partial \bsy_{\fraku}} 
F(\bsy_{\fraku};\bsy_{\{1:s\}\setminus \fraku}) \rd \bsy_{\{1:s\}\setminus \frak{v}}
\bigg|^q \rd \bsy_{\frak{v}} \bigg)^{\frac{r}{q}},\notag
\end{eqnarray*}
with the obvious adaptions if $r=\infty$.
Here, by~$(\bsalpha_{\frak{v}},\boldsymbol{\tau}_{\fraku \setminus \frak{v}},\boldsymbol{0})$ we denote a sequence~$\bsnu$ with components~$\nu_j = \alpha$ for~$j\in \frak{v}$,~$\nu_j = \tau_j$ for~$j \in \fraku \setminus \frak{v}$, and~$\nu_j=0$ for~$j \notin \fraku$.

\medskip

For technical reasons, we also define another unanchored Sobolev space $\calV_{s,\alpha,\bsgamma}$ of smoothness $\alpha\in\bbN$, with the norm  
\begin{align*}
\|F\|_{\calV_{s,\bsalpha,\bsgamma}}^2 :=& \sum_{\fraku\subseteq \{1,\ldots,s\}}  \frac{1}{\gamma_{\fraku}^2} \sum_{\frak{v}\subseteq \fraku} \sum_{\boldsymbol{\tau}_{\fraku \setminus \frak{v}} \in \{1,\ldots,\alpha-1\}^{\mid \fraku \setminus \frak{v}\mid}} \\
&\quad\int_{[-1/2,1/2]^{|\frak{v}|}} 
\bigg| \int_{[-1/2,1/2]^{s-|\frak{v}|}} 
\frac{\partial^{(\bsalpha_{\frak{v}},\boldsymbol{\tau}_{\fraku \setminus \frak{v}},\boldsymbol{0})}}{\partial \bsy_{\fraku}} 
F(\bsy_{\fraku};\bsy_{\{1:s\}\setminus \fraku}) \rd \bsy_{\{1:s\}\setminus \frak{v}}
\bigg|^2 \rd \bsy_{\frak{v}} ,\notag
\end{align*}
with the following adjustment for the case $\alpha=1$: 
in this case, all summands in the second sum for $\frak{v}\subsetneq\fraku$ disappear. If $\frak{v}=\fraku$, then $\boldsymbol{\tau}_{\fraku \setminus \frak{v}}$ is the ``void'' vector with no components such that the corresponding summand in the second sum remains. 

Essentially, the norm $\|\cdot\|_{\calV_{s,\bsalpha,\bsgamma}}$ is similar to $\|\cdot\|_{\calW_{s,\alpha,\bsgamma,2,2}}$, but with the difference 
that the summation range for $\boldsymbol{\tau}_{\fraku \setminus \frak{v}}$ is $\{1,\ldots,\alpha-1\}^{\mid \fraku \setminus \frak{v}\mid}$ instead of $\{1,\ldots,\alpha\}^{\mid \fraku \setminus \frak{v}\mid}$.

Note that for any $F\in \calW_{s,\alpha,\bsgamma,2,2}$ we have $\|F\|_{\calV_{s,\alpha,\bsgamma}} \le \|F\|_{\calW_{s,\alpha,\bsgamma,2,2}}$, and this implies, 
for any QMC rule $Q_{N,s}$,
\begin{equation}\label{eq:norm_WV_ineq}
e^{\rm wor}(Q_{N,s},\calW_{s,\alpha,\bsgamma,2,2})
\le e^{\rm wor}(Q_{N,s},\calV_{s,\alpha,\bsgamma}).
\end{equation}
Thus, any upper bound on the worst-case error of a QMC rule $Q_{N,s}$ in $\calV_{s,\bsalpha,\bsgamma}$ is also an upper bound on the worst-case error of $Q_{N,s}$ in $\calW_{s,\bsalpha,\bsgamma,2,2}$.

We can then in principle proceed as in the proof of \cite[Theorem 6.6]{guth2022parabolic} as follows. For dealing with $\calW_{s,\bsalpha,\bsgamma,q,r}$, we
distinguish the cases $r<\infty$, and $r=\infty$, and we assume $q<\infty$ for simplicity.

For $r=\infty$, and $F\in \calW_{s,\bsalpha,\bsgamma,q,\infty}$, 
we get
\begin{align*}
\|F\|_{ \calW_{s,\alpha,\bsgamma,q,\infty}} 
\le& 
\max_{\fraku\subseteq \{1,\ldots,s\}}  \frac{1}{\gamma_{\fraku}} \bigg(\sum_{\frak{v}\subseteq \fraku} \sum_{\boldsymbol{\tau}_{\fraku \setminus \frak{v}} \in \{1,\ldots,\alpha\}^{\mid \fraku \setminus \frak{v}\mid}} 
\int_{[-1/2,1/2]^{s}} 
\bigg| 
\frac{\partial^{(\bsalpha_{\frak{v}},\boldsymbol{\tau}_{\fraku \setminus \frak{v}},\boldsymbol{0})}}{\partial \bsy_{\fraku}} 
F(\bsy )\bigg|^q  
\rd \bsy  \bigg)^{\frac{1}{q}},
\end{align*}
and for $r<\infty$ and $F\in \calW_{s,\bsalpha,\bsgamma,q,r}$, 
\begin{align*}
\|F\|_{ \calW_{s,\bsalpha,\bsgamma,q,r}}^r 
\le& 
\sum_{\fraku\subseteq \{1,\ldots,s\}}  \bigg(\frac{1}{\gamma_{\fraku}}\sum_{\frak{v}\subseteq \fraku} \sum_{\boldsymbol{\tau}_{\fraku \setminus \frak{v}} \in \{1,\ldots,\alpha\}^{\mid \fraku \setminus \frak{v}\mid}} 
\int_{[-1/2,1/2]^{s}} 
\bigg| 
\frac{\partial^{(\bsalpha_{\frak{v}},\boldsymbol{\tau}_{\fraku \setminus \frak{v}},\boldsymbol{0})}}{\partial \bsy_{\fraku}} 
F(\bsy )\bigg|^q  
\rd \bsy  \bigg)^{\frac{r}{q}}.
\end{align*}
In both cases, we can replace $F(\bsy)$ by $G(\scrK_{\bssigma,s}(t))$, integrate 
with respect to $\bssigma$ instead of $\bsy$, and obtain
for $G\in Z'$ with $\|G\|_{Z'}\le 1$,
\begin{eqnarray*}
    \int_{[-1/2,1/2]^{s}} 
    \bigg|     \frac{\partial^{(\bsalpha_{\frak{v}},\boldsymbol{\tau}_{\fraku \setminus \frak{v}},\boldsymbol{0})}}{\partial \bssigma_{\fraku}} 
    G(\scrK_{\bssigma,s}(t))\bigg|^q  
    \rd \bssigma
    &=&
    \int_{[-1/2,1/2]^{s}} 
    \bigg| 
G\left(\frac{\partial^{(\bsalpha_{\frak{v}},\boldsymbol{\tau}_{\fraku \setminus \frak{v}},\boldsymbol{0})}}{\partial \bssigma_{\fraku}} 
    \scrK_{\bssigma,s}(t)\right)\bigg|^q  
    \rd \bssigma \\
    &\le&
    \int_{[-1/2,1/2]^{s}} 
    \|G\|_{Z'}^q \, \left\| \frac{\partial^{(\bsalpha_{\frak{v}},\boldsymbol{\tau}_{\fraku \setminus \frak{v}},\boldsymbol{0})}}{\partial \bssigma_{\fraku}}  \scrK_{\bssigma,s}(t)\right\|_Z^q 
    \rd \bssigma\\
    &=&
    \int_{[-1/2,1/2]^{s}} 
    \left\| \frac{\partial^{(\bsalpha_{\frak{v}},\boldsymbol{\tau}_{\fraku \setminus \frak{v}},\boldsymbol{0})}}{\partial \bssigma_{\fraku}} \scrK_{\bssigma,s}(t)\right\|_Z^q 
    \rd \bssigma\\ \\
    &\le&\left(\widetilde{C}\,
     (|(\bsalpha_{\frak{v}},\boldsymbol{\tau}_{\fraku\setminus\frak{v}},\boldsymbol{0})|+2)! \, 
     \bsb_{\{1:s\}}^{(\bsalpha_{\frak{v}},\boldsymbol{\tau}_{\fraku\setminus\frak{v}},\boldsymbol{0})}
    \right)^q,
\end{eqnarray*} 
where we used Theorem~\ref{thm:combreg} in the last step. Thus, 
\begin{eqnarray}
\lefteqn{\sup_{\substack{G\in Z'\\ \| G\|_{Z'}\le 1}} \|G(\scrK_{\bssigma,s}(t))\|_{ \calW_{s,\bsalpha,\bsgamma,q,\infty}}}\notag\\ 
&\le&
\max_{\fraku\subseteq \{1,\ldots,s\}}  \frac{\widetilde{C}}{\gamma_{\fraku}} \bigg(\sum_{\frak{v}\subseteq \fraku} \sum_{\boldsymbol{\tau}_{\fraku \setminus \frak{v}} \in \{1,\ldots,\alpha\}^{\mid \fraku \setminus \frak{v}\mid}} 
 \left(
     (|(\bsalpha_{\frak{v}},\boldsymbol{\tau}_{\fraku\setminus\frak{v}},\boldsymbol{0})|+2)! \, 
     \bsb_{\{1:s\}}^{(\bsalpha_{\frak{v}},\boldsymbol{\tau}_{\fraku\setminus\frak{v}},\boldsymbol{0})}
    \right)^q \bigg)^{\frac{1}{q}}\notag\\
&\le&
\max_{\fraku\subseteq \{1,\ldots,s\}}  \frac{\widetilde{C}}{\gamma_{\fraku}} \sum_{\frak{v}\subseteq \fraku} \sum_{\boldsymbol{\tau}_{\fraku \setminus \frak{v}} \in \{1,\ldots,\alpha\}^{\mid \fraku \setminus \frak{v}\mid}} 
     (|(\bsalpha_{\frak{v}},\boldsymbol{\tau}_{\fraku\setminus\frak{v}},\boldsymbol{0})|+2)! \, 
     \bsb_{\{1:s\}}^{(\bsalpha_{\frak{v}},\boldsymbol{\tau}_{\fraku\setminus\frak{v}},\boldsymbol{0})}\notag\\
    &\le&
    \max_{\fraku\subseteq \{1,\ldots,s\}}  \frac{\widetilde{C}}{\gamma_{\fraku}}
    \sum_{\bsnu_\fraku \in \{1:\alpha\}^{|\fraku|}} (|\bsnu_\fraku|+2)! \prod_{j\in \fraku} (2^{\delta(\nu_j,\alpha)} b_j^{\nu_j}),\label{eq:sup_bound_infinite}
\end{eqnarray}
where~$\delta(\nu_j,\alpha)$ equals~$1$ if~$\nu_j = \alpha$, and is zero otherwise.

Analogously,
\begin{equation}\label{eq:sup_bound_finite}
\sup_{\substack{G\in Z'\\ \| G\|_{Z'}\le 1}}\|G(\scrK_{\bssigma,s}(t))\|_{ \calW_{s,\alpha,\bsgamma,q,r}}^r
\le
\sum_{\fraku\subseteq \{1,\ldots,s\}} \Bigg(\frac{\widetilde{C}}{\gamma_{\fraku}} 
    \sum_{\bsnu_\fraku \in \{1:\alpha\}^{|\fraku|}} (|\bsnu_\fraku|+2)! \prod_{j\in \fraku} (2^{\delta(\nu_j,\alpha)} b_j)^{\nu_j})\Bigg)^r.
\end{equation}
Reconsidering the above estimates implies that~\eqref{eq:sup_bound_finite} remains unchanged for~$q=\infty$.

We will now apply Theorem \ref{thm:GKKSS_Banach} for different choices of $\alpha$, $r$, and $q$, respectively, and outline how we can use well chosen QMC rules in order to obtain a suitable error bound. 

\subsection{Randomly shifted lattice rules for smoothness one}\label{sec:random_shift}
In this section, we consider the weighted Sobolev space of smoothness one, 
$\calV_{s,1,\bsgamma,}$ as introduced above, 
with norm
\[
\|F\|_{\calV_{s,1,\bsgamma}}^2:= \sum_{\fraku\subseteq \{1:s\}} \frac{1}{\gamma_{\fraku}^2}
\int_{[-1/2,1/2]^{|\fraku|}} 
\left| \int_{[-1/2,1/2]^{s-|\fraku|}} 
\frac{\partial^{|\fraku |}}{\partial \bsy_{\fraku}} 
F(\bsy_{\fraku};\bsy_{\{1:s\}\setminus \fraku}) \rd \bsy_{\{1:s\}\setminus \fraku}
\right|^2 \rd \bsy_{\fraku}.
\]
The space $\calV_{s,1,\bsgamma}$ is frequently considered in literature on lattice rules for multivariate integration.
Indeed, lattice point sets are highly-structured integration node sets in $[0,1)^s$. A sub-class of these are \textit{rank-1 lattice point sets}, essentially based on one \emph{generating vector} $\bsz=(z_1,\ldots,z_s)$. They are among the most prominent QMC integration node sets 
(see, e.g., \cite{dick2022lattices, nied1992siam, sloan1994lattices}). 
For a natural number $N \in \bbN$ and an integer generating vector $\bsz \in \{1,2,\dots,N-1\}^s$, rank-1 lattice points are of the form
\[  
 \bsx_k := \left\lbrace \frac{k}{N}\bsz\right\rbrace = \left( \left\{ \frac{k z_1}{N}\right\} , \ldots , 
 \left\{ \frac{k z_s}{N}\right\} \right)\in [0, 1)^s,\qquad\text{for } k = 0, 1, \ldots, N-1,
\] 
and the point set $\bsx_0,\bsx_1,\ldots,\bsx_{N-1}$ is sometimes denoted by $P(\bsz,N)$. For $\bsx \in\bbR^s$ we apply $\{\cdot\}$ component-wise, 
where $\{x\}=x-\lfloor x \rfloor$ is the fractional part of $x\in\bbR$. A QMC rule using $P(\bsz,N)$ as integration nodes 
is called a \emph{(rank-1) lattice rule}. Note that, given $N$ and $s$, 
a rank-1 lattice rule is completely determined by the choice of the generating vector $\bsz$. However, not every choice of $\bsz$ 
yields a rank-1 lattice rule with good quality for approximating the integral. The standard procedure to choose a good $\bsz$ is the so-called \textit{component-by-component construction} (or CBC construction, for short); a CBC construction 
is a greedy algorithm to choose the components of $\bsz$ one after another. Fast implementations of CBC constructions exist, many of them due to Cools and Nuyens (see, e.g., \cite{cools_nuyens}) and again \cite{dick2022lattices} for an overview.

It is sometimes useful to introduce a random element in the application 
of lattice rules. One way of doing so is to 
modify the lattice integration rule $\bsx_k$, $k=0,1,\ldots,N-1$ to $\{\bsx_k + \Delta\}$, 
$k=0,1,\ldots,N-1$, where $\Delta$ is chosen according to a uniform distribution on $[0,1]^d$, and where $\{\cdot\}$ again denotes the fractional part. In this way, we obtain a \textit{randomly shifted lattice rule} (all points are shifted by the same random $\Delta$). Note that by 
fixing a lattice point set and generating $M$ independent realizations of a random shift, 
one can study statistical properties of the corresponding estimators for the integral to be numerically computed.

It is known (see, e.g., \cite[Chapter 7]{dick2022lattices}) that we can effectively construct (by a CBC algorithm) a randomly shifted lattice rule with points $\bsx_0,\ldots,\bsx_{N-1}\in [-1/2,1/2]^s$ (shifted by a random $\bsDelta$) such that
\[
  \bbE_{\bsDelta}([e^{\rm wor}(Q_{N,s},\calV_{s,1,\bsgamma})]^2)\le 
  \left(\frac{1}{(\phi_{\rm tot}(N))}\sum_{\emptyset \neq \fraku \subseteq \{1:s\}}
  \gamma_{\fraku}^{2\lambda} 
  \left(\frac{2\zeta(2\lambda)}{(2\pi^2)^\lambda}\right)^{|\fraku|}\right)^{\frac{1}{\lambda}},
\]
for all $\lambda\in (1/2,1]$, where $\phi_{\rm tot}$ is Euler's totient function. By Equation \eqref{eq:norm_WV_ineq}, we then also obtain
\[
  \bbE_{\bsDelta}([e^{\rm wor}(Q_{N,s},\calW_{s,1,\bsgamma,2,2})]^2)\le 
  \left(\frac{1}{(\phi_{\rm tot}(N))}\sum_{\emptyset \neq \fraku \subseteq \{1:s\}}
  \gamma_{\fraku}^{2\lambda} 
  \left(\frac{2\zeta(2\lambda)}{(2\pi^2)^\lambda}\right)^{|\fraku|}\right)^{\frac{1}{\lambda}}.
\]
Next, we combine the latter inequality with \eqref{eq:sup_bound_finite}, and slightly modify Theorem \ref{thm:GKKSS_Banach} (see \cite[Theorem 6.6]{guth2022parabolic}) to obtain
\begin{equation}\label{eq:error_1_random}
\bbE_{\bsDelta} \left(\left\|\int_{[-1/2,1/2]^s} \scrK_{\bssigma,s}(t)\rd\bssigma - 
 \frac{1}{N}\sum_{k=0}^{N-1} \scrK_{\{\bsx_k + \bsDelta\},s}(t)\right\|_{Z}^2 \right)\le 
 C_{s,1,\bsgamma,\lambda}\, \frac{1}{(\phi_{\rm tot}(N))^{1/\lambda}},
\end{equation}
for all $\lambda\in (1/2,1]$, where $C_{s,1,\bsgamma,\lambda}$ is a constant that is independent of $N$, but dependent on the coordinate weights $\bsgamma=(\gamma_{\fraku})_{\fraku\subseteq \{1,\ldots,s\}}$ in the space $\calW_{s,1,\bsgamma,2,2}$,
\[
  C_{s,1,\bsgamma,\lambda}= \widetilde{C}^2 
  \left(\sum_{\emptyset \neq \fraku \subseteq \{1:s\}}
  \gamma_{\fraku}^{2\lambda} 
  \left(\frac{2\zeta(2\lambda)}{(2\pi^2)^\lambda}\right)^{|\fraku|}\right)^{\frac{1}{\lambda}}
  \left(\sum_{\fraku \subseteq \{1:s\}} 
  \frac{[(|\fraku | +2)!]^2\, \prod_{j\in \fraku} b_j^{2}}{\gamma_{\fraku}^2}\right).
\]

\subsection{Folded lattice rules for smoothness one}

As an alternative to a randomly shifted lattice rule, it is also possible to derive a deterministic error bound similar to \eqref{eq:error_1_random} by employing so-called \textit{folded lattice rules}, also referred to as \textit{tent-transformed lattice rules}. The tent transformation $\phi:[0,1]\to [0,1]$ is a Lebesgue measure preserving mapping defined by $\phi (x)=1-|2x-1|$. If we apply $\phi$ to a point $\bsx\in [0,1]^s$, we always mean component-wise application, i.e., $\phi (\bsx)=(\phi(x_1),\ldots,\phi(x_s))$. For a given lattice point set we then apply $\phi$ to all points, and thereby obtain the \textit{folded lattice point set}; the corresponding lattice 
rule is then called a folded lattice rule. 

Let us consider the same function space $\calV_{s,1,\bsgamma}$ as in Section \ref{sec:random_shift}. It is known from results in \cite{dick2014nonperiodic} that one can construct a folded lattice rule $Q_{N,s}^\phi$ by a CBC algorithm such that 
\[
 \left[e^{\rm wor} (Q_{N,s}^\phi,\calV_{s,1,\bsgamma})\right]^2 
 \le 
 \frac{1}{(\phi_{\rm tot}(N))^{1/\lambda}}\, 
 \left(\sum_{\emptyset \neq \fraku \subseteq \{1:s\}}
  \gamma_{\fraku}^{2\lambda} 
  \left(\frac{2\zeta(2\lambda)}{(\pi^2)^\lambda}\right)^{|\fraku|}\right)^{\frac{1}{\lambda}}
\]
for all $\lambda\in (1/2,1]$.

We can then proceed similarly to Section \ref{sec:random_shift}, and 
employ Theorem \ref{thm:GKKSS_Banach}. In this way, we can construct by a CBC algorithm a folded lattice rule with points $\phi (\bsx_0),\ldots,\phi(\bsx_{N-1})$ such that
\begin{equation}\label{eq:error_1_folded}
\left\|\int_{[-1/2,1/2]^s} \scrK_{\bssigma,s}(t)\rd\bssigma - 
 \frac{1}{N}\sum_{k=0}^{N-1} \scrK_{\{\phi(\bsx_k)\},s}(t)\right\|_{Z}^2 \le 
 \widehat{C}_{s,1,\bsgamma,\lambda}\, \frac{1}{(\phi_{\rm tot}(N))^{1/\lambda}},
\end{equation}
for all $\lambda\in (1/2,1]$, and where $\widehat{C}_{s,1,\bsgamma,\lambda}$ is a constant that is independent of $N$, but dependent on the coordinate weights $\bsgamma=(\gamma_{\fraku})_{\fraku\subseteq \{1,\ldots,s\}}$ in the space $\calW_{s,1,\bsgamma,2,2}$,
\[
 \widehat{C}_{s,1,\bsgamma,\lambda}= \widetilde{C}^2 
  \left(\sum_{\emptyset \neq \fraku \subseteq \{1:s\}}
  \gamma_{\fraku}^{2\lambda} 
  \left(\frac{2\zeta(2\lambda)}{(\pi^2)^\lambda}\right)^{|\fraku|}\right)^{\frac{1}{\lambda}}
  \left(\sum_{\fraku \subseteq \{1:s\}} 
  \frac{[(|\fraku | +2)!]^2\, \prod_{j\in \fraku} b_j^{2}}{\gamma_{\fraku}^2}\right),
\]
which means that the error bound using a folded lattice rule is, up to a factor of 2, the same as the probabilistic error bound using a randomly shifted lattice rule.

\subsection{Higher-order QMC}

We now consider the Sobolev space $\calW_{s,\alpha,\bsgamma,q,\infty}$. For integrands in 
$\calW_{s,\alpha,\bsgamma,q,\infty}$ it is natural to hope that 
the smoothness $\alpha$ is reflected in the error convergence rate of a cubature rule, in the sense that higher smoothness yields higher-order convergence. Indeed, this can be achieved by using higher-order QMC rules which are based on polynomial lattice rules, another important class of QMC node sets, which was introduced by Niederreiter (see, e.g., \cite{nied1992siam}). A polynomial lattice rule can be constructed analogously to an ordinary lattice rule, by replacing integer arithmetic by polynomial arithmetic over finite fields. The generating vector of a polynomial lattice point set is then a vector of polynomials over a finite field, and can again be obtained by a CBC construction. It was shown by Dick that one can use so-called higher-order polynomial lattice rules (which are obtained from ordinary polynomial lattice rules as building blocks, by the method of digit interlacing) for numerical integration in Sobolev spaces of higher smoothness and obtain essentially optimal error convergence rates. We refer to \cite{dick2010nets} for details. 
While previous studies of higher-order QMC methods consider real-valued integrads or integrands taking values in a separable Hilbert space, a novelty of our work is the generalization to separable Banach spaces.

Let~$\beta$ be prime,~$m \in \bbN$, and~$N=\beta^m$. Then, for~$r=\infty$ and every~$1\le q\le \infty$, it is known (see,~\cite[Thm.~3.10]{DKLNS14}) that an interlaced polynomial lattice rule~$Q_{N,s}$ of order~$\alpha$ can be constructed using a CBC algorithm, such that
\begin{align}\label{eq:cbc_wc}
    e^{\rm wor}(Q_{N,s}, \calW_{s,\alpha,\bsgamma,q,\infty}) \le \bigg(\frac{2}{N-1} \sum_{\emptyset \neq \fraku \subseteq \{1:s\}} \gamma_\fraku^\lambda (\rho_{\alpha,\beta}(\lambda))^{|\fraku|} \bigg)^{\frac{1}{\lambda}}
\end{align}
for all~$\lambda\in(1/\alpha,1]$, where
\begin{align*}
    \rho_{\alpha,\beta}(\lambda) := \left(C_{\alpha,\beta}\,\beta^{\alpha(\alpha-1)/2}\right)^\lambda \left( \left(1 + \frac{\beta-1}{\beta^{\alpha\lambda}-\beta} \right)^\alpha -1 \right), 
\end{align*}
with~$C_{\alpha,\beta}$ depending only on~$\alpha$ and~$\beta$.

Combining \eqref{eq:cbc_wc} with \eqref{eq:sup_bound_infinite}, we obtain
\[
\left\|\int_{[-1/2,1/2]^s} \scrK_{\bssigma,s}(t)\rd\bssigma - 
 \frac{1}{N}\sum_{k=0}^{N-1} \scrK_{\bsx_k,s}(t)\right\|_{Z}
 \le C_{s,\alpha,\bsgamma,\lambda} \frac{1}{(N-1)^{1/\lambda}}
\]
for all $\lambda\in (1/\alpha,1]$, where $C_{s,\alpha,\bsgamma,\lambda}$ is a constant that is independent of $N$, but dependent on the coordinate weights $\bsgamma=(\gamma_{\fraku})_{\fraku\subseteq \{1,\ldots,s\}}$ in the space $\calW_{s,\alpha,\bsgamma,q,\infty}$,
\begin{equation}\label{eq:err_alpha_nets}
 C_{s,\alpha,\bsgamma,\lambda}= \widetilde{C}  \bigg( 2 \sum_{\emptyset \neq \fraku \subseteq \{1:s\}} \gamma_\fraku^\lambda (\rho_{\alpha,b}(\lambda))^{|\fraku|} \bigg)^{\frac{1}{\lambda}}
 \frac{1}{\gamma_{\fraku}}
    \sum_{\bsnu_\fraku \in \{1:\alpha\}^{|\fraku|}} (|\bsnu_\fraku|+2)! \prod_{j\in \fraku} (2^{\delta(\nu_j,\alpha)} b_j^{\nu_j}).
\end{equation}

\medskip

\subsection{Choosing the weights}

Now, we make a particular choice of the weights $\bsgamma=(\gamma_{\fraku})_{\fraku\subseteq \{1,\ldots,s\}}$. 
These are so-called ``SPOD weights'' (smoothness-driven product and order dependent weights),
\begin{align}\label{eq:spod_weights}
    \gamma_\fraku^\ast := \sum_{\bsnu_\fraku \in \{1:\alpha\}^{|\fraku|}} (|\bsnu_\fraku|+2)! \prod_{j\in \fraku} (2^{\delta(\nu_j,\alpha)} b_j^{\nu_j}).
\end{align}
In the special case $\alpha=1$, the SPOD weights in \eqref{eq:spod_weights} simplify to ``POD weights'' (product and order dependent weights),
\begin{align}\label{eq:pod_weights}
    \gamma_\fraku^\ast :=  (|\fraku|+2)! \prod_{j\in \fraku} (2 b_j^{\nu_j}).
\end{align}

Let us start by analyzing the bound \eqref{eq:error_1_random} for randomly shifted lattice rules. From \cite[Lemma 6.2]{kuo2012qmcfe} we 
know that $C_{s,1,\bsgamma,\lambda}$ is minimized by choosing the 
set of weights $\bsgamma^*=(\gamma_{\fraku}^*)_{\fraku\subseteq \{1,\ldots,s\}}$, with
\[
\gamma_\fraku^* =\left((|\fraku| +2)!
\prod_{j\in\fraku} \frac{b_j (2\pi^2)^{\lambda/2}}{\sqrt{2\zeta (2\lambda)}}\right)^{1/(1+\lambda)}.
\]
Using the same argumentation as in \cite[Proof of Theorem 6.7]{guth2022parabolic} we then obtain that $C_{s,1,\bsgamma^*,\lambda}$ is bounded independently of $s$, by choosing 
\[
\lambda=\begin{cases}
            1/(2-2\delta)\quad  \mbox{for arbitrary $\delta\in (0,1)$} & 
            \mbox{if $p\in (0,2/3]$,}\\
            p/(2-p) & \mbox{if $p\in (2/3,1]$.}
        \end{cases}
\]
Moreover, it follows---again by the same argument as in \cite{guth2022parabolic}---that the bound \eqref{eq:error_1_random} on the mean-square error is of order 
\[
\kappa (N)=\begin{cases}
            [\phi_{\rm tot}(N)]^{-2-2\delta} \quad  \mbox{for arbitrary $\delta\in (0,1)$} & 
            \mbox{if $p\in (0,2/3]$,}\\
            [\phi_{\rm tot}(N)]^{-(2/p-1)} & \mbox{if $p\in (2/3,1]$.}
        \end{cases}
\]
For the bound \eqref{eq:error_1_folded} on the squared integration error 
using folded lattice rules in the case of smoothness one, we obviously can choose $\bsgamma$ and $\lambda$ in exactly the same way as for \eqref{eq:error_1_random}.

\medskip

Next, we insert the SPOD weights as in \eqref{eq:spod_weights} into \eqref{eq:err_alpha_nets}, and we make use of the inequality $\left(\sum_k a_k\right)^\lambda \le \sum_k a_k^\lambda$, which holds for any $\lambda\in (0,1]$ and nonnegative $a_k$. Consequently, we can bound the 
term $C_{s,\alpha,\bsgamma,\lambda}$ in \eqref{eq:err_alpha_nets},
\begin{align}
    C_{s,\alpha,\bsgamma,\lambda} &\le \widetilde{C}\bigg( 2 \sum_{\emptyset \neq \fraku \subseteq \{1:s\}} \sum_{\bsnu_\fraku \in \{1:\alpha\}^{|\fraku|}} ((|\bsnu_\fraku|+2)!)^\lambda \prod_{j \in \fraku} (E 2^{\delta(\nu_j,\alpha)} b_j^{\nu_j})^\lambda \bigg)^{\frac{1}{\lambda}}\notag\\
    &= \widetilde{C}\bigg( 2 \sum_{\boldsymbol{0}\neq \bsnu \in \{0:\alpha\}^s} ((|\bsnu|+2)!)^\lambda \prod_{\substack{j=1\\ \nu_j >0}}^s (E  2^{\delta(\nu_j,\alpha)} b_j^{\nu_j})^\lambda\bigg)^{\frac{1}{\lambda}},\label{eq:worweights}
\end{align}
where~$E:= C_{\alpha,\beta}\, \beta^{\alpha(\alpha-1)/2} ((1+\frac{\beta-1}{\beta^{\alpha \lambda}-\beta})^\alpha-1)^{\frac{1}{\lambda}}$. Following the steps in~\cite[pp.~2694--2695]{DKLNS14}, one shows that the sum in~\eqref{eq:worweights} is finite independently of~$s$ if~$p\le \lambda <1$. 
In the case~$\lambda = 1$, we assume in addition that~$\sum_{j\ge1}b_j < (2\alpha \max{(E,1)})^{-1}$. Since~$\lambda$ needs to satisfy~$1/\alpha<\lambda \le 1$, we take~$\lambda = p$ and $\alpha = \lfloor 1/p \rfloor +1$. In this case, the integration error is thus of order~$\mathcal{O}(N^{-\alpha})$, where the implied constant is independent of the dimension~$s$.

\section{Approximation error of trajectories and controls}\label{sec:errorprop}

In this section we investigate how the approximation error of the feedback propagates to the trajectories and the controls. We consider the affine feedback
\begin{align}\label{eq:averfb}
    K(t,\cdot) := \int_\fkS K_{\bssigma}(t,\cdot) \mathrm d\bssigma := -\int_\fkS B^\ast \Pi_{\bssigma}(T-t)(\cdot) \mathrm d\bssigma - \int_\fkS B^\ast h_{\bssigma}(t) \mathrm d\bssigma
\end{align}
for~$t\in [0,T]$, which covers both the homogeneous (Section~\ref{sec:homogeneous}) and the nonhomogeneous (Section~\ref{sec:nonhomogeneous}) cases.
Let us denote by
$$\hat{K}(t,\cdot) := \sum_{k=1}^N \alpha_k K_{\bssigma_{k},s}(t,\cdot) := - \sum_{k=1}^N \alpha_k B^\ast {\Pi}_{\bssigma_{k},s}(T-t) (\cdot) - \sum_{k=1}^N \alpha_k B^\ast {h}_{\bssigma_{k},s}(t)$$ 
an approximation of the feedback~$K(t,\cdot)$ using an~$N$-point cubature rule with weights~$\alpha_k \in \mathbb{R}$, and with~$s$-dimensional integration nodes~$\bssigma_{0},\ldots,\bssigma_{N-1}$ (as, for instance, described in Section~\ref{sec:QMC}), leading to the closed-loop system
\begin{align}\label{eq:yhat}
    \dot{\hat{y}}_{\bssigma} = (\calA_{\bssigma} + B \hat{K}) \hat{y}_{\bssigma} + f,\qquad \hat{y}_{\bssigma}(0) = y_\circ. 
\end{align}
Denoting by~$y_{\bssigma}$ the closed-loop state associated to the feedback law~$K$, as well as~$\delta y_{\bssigma} := y_{\bssigma} - \hat{y}_{\bssigma}$,~$B^\ast\Pi := \int_\fkS B^\ast \Pi_{\bssigma}\,\mathrm d\bssigma$,~$B^\ast \hat{\Pi} := \sum_{k=1}^N \alpha_k B^\ast \Pi_{\bssigma_{k},s}$,~$B^\ast\delta \Pi :=  B^\ast\Pi - B^\ast\hat{\Pi}$,~$B^\ast h := \int_\fkS B^\ast h_{\bssigma}\,\mathrm d\bssigma$,~$B^\ast \hat{h} := \sum_{k=1}^N \alpha_k B^\ast h_{\bssigma_{k},s}$, and~$B^\ast \delta h = B^\ast h- B^\ast\hat{h} $, we have 
\begin{align}\label{eq:deltay}
    \dot{\delta y} = (\calA_{\bssigma} - B B^\ast\Pi) \delta y - B (B^\ast \delta \Pi \hat{y}+B^\ast \delta h), \quad \delta y(0) = 0,
\end{align}
and we obtain the following result.
\begin{theorem}\label{thm:convtrajcont}
    Let~\eqref{A1},~\eqref{A2},~\eqref{A3},~\eqref{eq:A4},~\eqref{eq:A5}, and~\eqref{eq:A6} hold. Then, it holds that
\begin{align}
    \|y_{\bssigma}(t)-\hat{y}_{\bssigma}(t)\|_H &\le C_y\max_{t \in [0,T]} \left( \|B^\ast \delta \Pi(T-t)\|_{\calL(H,U)} + \|B^\ast \delta h(t) \|_U\right) \quad \forall t\in [0,T],\notag \\
    \|u_{\bssigma}(t)-\hat{u}_{\bssigma}(t)\|_U &\le C_u\max_{t \in [0,T]} \left( \|B^\ast \delta \Pi(T-t)\|_{\calL(H,U)} + \|B^\ast \delta h(t) \|_U\right)\quad \forall t\in [0,T],\notag
\end{align}
where the constants~$C_u$ and~$C_y$ are independent of~$\bssigma \in \fkS$.
\end{theorem}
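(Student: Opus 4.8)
The plan is to regard the error equation~\eqref{eq:deltay} as a linear parabolic evolution problem with vanishing initial datum, governed by the closed-loop generator $\calA_{\bssigma}-BB^\ast\Pi(T-t)$ and driven by the forcing $r_{\bssigma}(t):=-B\bigl(B^\ast\delta\Pi(T-t)\,\hat y_{\bssigma}(t)+B^\ast\delta h(t)\bigr)\in H$. Since $\Pi=\int_\fkS\Pi_{\bssigma}\,\rd\bssigma$ inherits the uniform operator bound from~\eqref{eq:boundPi} (with $\bsnu=\boldsymbol0$), the perturbation $BB^\ast\Pi(T-t)$ is uniformly bounded on $H$, so that the coercivity~\eqref{A2} of $\calA_{\bssigma}$ is preserved up to an enlarged shift constant. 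Before treating~\eqref{eq:deltay}, however, I would first establish a uniform-in-$\bssigma$ a priori bound $\sup_{t\in[0,T]}\|\hat y_{\bssigma}(t)\|_H\le M$ for the closed-loop state~\eqref{eq:yhat}: an energy estimate of the type used for Lemma~\ref{lem:apriori_unc}, invoking~\eqref{A2},~\eqref{A3}, the boundedness of $B$, and the fixed (hence uniformly bounded) approximations $\hat\Pi$ and $\hat h$, yields such an $M$ depending only on $\|y_\circ\|_H$, $f$, the target data, and $T$, but not on $\bssigma$.

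For the state error I would test~\eqref{eq:deltay} with $\delta y_{\bssigma}(t)\in V$ and integrate in time. Using~\eqref{A1}--\eqref{A2} to handle $\langle\calA_{\bssigma}\delta y_{\bssigma},\delta y_{\bssigma}\rangle_{V',V}$, bounding $|\langle BB^\ast\Pi(T-t)\delta y_{\bssigma},\delta y_{\bssigma}\rangle_H|\le\|BB^\ast\Pi\|_{\calL(H)}\|\delta y_{\bssigma}\|_H^2$, and applying Young's inequality to $\langle r_{\bssigma},\delta y_{\bssigma}\rangle_H$, one arrives at a differential inequality $\tfrac{\mathrm d}{\mathrm d t}\|\delta y_{\bssigma}\|_H^2\le c_1\|\delta y_{\bssigma}\|_H^2+\|r_{\bssigma}\|_H^2$ with $c_1$ independent of $\bssigma$ (the $-\theta\|\delta y_{\bssigma}\|_V^2$ coercivity contribution can simply be discarded). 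Gronwall's inequality together with $\delta y_{\bssigma}(0)=0$ then gives $\sup_{t\in[0,T]}\|\delta y_{\bssigma}(t)\|_H^2\le e^{c_1T}\int_0^T\|r_{\bssigma}(s)\|_H^2\rd s$. Because $\|r_{\bssigma}(s)\|_H\le\|B\|_{\calL(U,H)}\bigl(M\,\|B^\ast\delta\Pi(T-s)\|_{\calL(H,U)}+\|B^\ast\delta h(s)\|_U\bigr)$ by the bound on $\hat y_{\bssigma}$, the right-hand side is controlled by $T$ times the square of $\max_{t\in[0,T]}\bigl(\|B^\ast\delta\Pi(T-t)\|_{\calL(H,U)}+\|B^\ast\delta h(t)\|_U\bigr)$, producing the asserted estimate with a constant $C_y$ depending on $T$, $\|B\|_{\calL(U,H)}$, $M$, $\rho$, and the bound on $\Pi$, all of which are independent of $\bssigma$.

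For the control error I would exploit the affine feedback representation. The same splitting $\Pi\,y_{\bssigma}-\hat\Pi\,\hat y_{\bssigma}=\Pi\,\delta y_{\bssigma}+\delta\Pi\,\hat y_{\bssigma}$ that underlies the derivation of~\eqref{eq:deltay} yields $u_{\bssigma}(t)-\hat u_{\bssigma}(t)=-B^\ast\Pi(T-t)\delta y_{\bssigma}(t)-B^\ast\delta\Pi(T-t)\hat y_{\bssigma}(t)-B^\ast\delta h(t)$. Taking $\|\cdot\|_U$ and estimating the three summands---the first by $\|B^\ast\Pi(T-t)\|_{\calL(H,U)}\|\delta y_{\bssigma}(t)\|_H$ using the uniform bound on $\Pi$ and the state-error bound just derived, the second by $M\,\|B^\ast\delta\Pi(T-t)\|_{\calL(H,U)}$, and the third directly---gives the claim with a constant $C_u$ again independent of $\bssigma$.

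I expect the main obstacle to be the uniformity in $\bssigma$ of the energy estimate for~\eqref{eq:deltay}: one must verify that neither the Gronwall exponent $c_1$ nor the forcing bound deteriorates as $\bssigma$ ranges over $\fkS$. This is precisely where the $\bssigma$-uniform hypotheses enter---\eqref{A2} and~\eqref{A3} for $\calA_{\bssigma}$, and the uniform operator bound on $\Pi_{\bssigma}$ from~\eqref{eq:boundPi}---ensuring that the bounded perturbation $BB^\ast\Pi$ contributes only a $\bssigma$-independent shift to the effective coercivity constant, and that the uniform bound $M$ on $\hat y_{\bssigma}$ holds across all realizations.
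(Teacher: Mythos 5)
Your proposal is correct and follows essentially the same route as the paper's proof: testing the error equation~\eqref{eq:deltay} with $\delta y_{\bssigma}$, absorbing the uniformly bounded perturbation $BB^\ast\Pi$ (via~\eqref{eq:boundPi} with $\bsnu=\boldsymbol 0$) into the Gronwall exponent, bounding the closed-loop state $\hat y_{\bssigma}$ uniformly in $\bssigma$, and then estimating the control error through the affine feedback splitting. The only deviations are cosmetic: you bound $\hat y_{\bssigma}$ in $C([0,T];H)$ rather than in $H_T$, and your control decomposition $-B^\ast\Pi\,\delta y - B^\ast\delta\Pi\,\hat y - B^\ast\delta h$ is the symmetric twin of the paper's $-B^\ast\delta\Pi\, y - B^\ast\hat\Pi\,\delta y - B^\ast\delta h$, both of which work equally well.
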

\begin{proof}
By~\eqref{eq:boundPi} (with~$\bsnu = \boldsymbol{0}$) we have that~$BB^\ast\Pi \in L^\infty((0,T);\mathcal{L}(H))$ and~$BB^\ast\hat\Pi \in L^\infty((0,T);\mathcal{L}(H))$. Hence, for~$f \in V'_T$ and~$y_\circ \in H$, we have that~$y \in W_T(V,V')$ and~$\hat{y} \in W_T(V,V')$ (see, e.g., \cite[Thm.~1.2, Part II, Ch. 2]{bensoussan2007representation}), and thus~$\delta y \in W_T(V,V')$.
Testing~\eqref{eq:deltay} with~$\delta y$ leads to
\begin{align}
    &\frac{1}{2} \frac{\mathrm d}{\mathrm dt}\|\delta y_{\bssigma}(t)\|_H^2 = \langle \calA_{\bssigma} \delta y_{\bssigma}(t), \delta y_{\bssigma}(t) \rangle_{V',V} - \langle B B^\ast \Pi(T-t) \delta y_{\bssigma}(t), \delta y_{\bssigma}(t)\rangle_H \notag\\
        &\qquad\qquad\qquad\quad- \langle B( B^\ast \delta \Pi(T-t) \hat{y}_{\bssigma}(t) + B^\ast\delta h(t)), \delta y_{\bssigma}(t)\rangle_H\notag\\
    &\quad\le \rho \|\delta y_{\bssigma}(t)\|^2_H - \theta \|\delta y_{\bssigma}(t)\|^2_V + \|BB^\ast \Pi(T-t)\|_{\calL(H)} \|\delta y_{\bssigma}(t)\|^2_H \notag\\
        &\qquad\qquad\qquad\quad+ \frac{1}{2} \|B(B^\ast \delta \Pi(T-t) \hat{y}_{\bssigma}(t) + B^\ast \delta h(t)) \|_H^2 + \frac{1}{2}\|\delta y_{\bssigma}(t)\|^2_H\notag\\
    &\quad\le \bigg(\rho + \|BB^\ast \Pi(T-t)\|_{\calL(H)} + \frac{1}{2} \bigg) \|\delta y_{\bssigma}(t)\|^2_H + \frac{1}{2} \|B(B^\ast \delta \Pi(T-t) \hat{y}_{\bssigma}(t) + B^\ast \delta h(t) )\|_H^2, \notag
\end{align}
where we used~\eqref{A1},~\eqref{A2} with~$\theta \ge 0$, and Young's inequality. Since~$\sup_{t\in [0,T]}\|BB^\ast \Pi(T-t)\|_{\calL(H)} \le C_{B\Pi}$, Gronwall's lemma implies
\begin{align}
    \|\delta y_{\bssigma}(t) \|_H^2 &\le \int_0^t e^{(1+2(\rho+C_{B\Pi}))(t-s)}  \|B(B^\ast \delta\Pi(T-s) \hat{y}_{\bssigma}(s) + B^\ast\delta h(t)) \|_H^2\, \mathrm ds\notag\\
    &\le \max{\left(1,e^{(1+2(\rho+C_{B\Pi}))T}\right)} \|B\|^2_{\calL(H,U)} \left(\|B^\ast \delta \Pi \hat{y}_{\bssigma} + B^\ast \delta h\|_{U_T}^2 \right) \notag.
\end{align}
Thus, for all~$t\in [0,T]$, we have
\begin{align}\label{eq:deltaybound}
    \|\delta y_{\bssigma}(t) \|_H &\le \tilde C_{y} \bigg(\max_{s \in [0,T]} \left(\|B^\ast \delta \Pi(T-s)\|_{\calL(H,U)} +\|B^\ast \delta h(s)\|_U\right)\bigg),
\end{align}
for some constant~$\tilde C_{y} >0$ depending on~$B,\Pi,T,\hat{y}_{\bssigma}$.
Furthermore, testing~\eqref{eq:yhat} with~$\hat{y}$ leads after similar estimations to
\begin{align}\label{eq:yhelpdiff1}
    \|\hat y_{\bssigma} \|_{H_T}^2 \le T\max{\left(1,e^{(1+2(\rho+C_{B\hat\Pi}))T}\right)} \left(\|y_\circ\|_H^2 + \|BB^\ast \hat{h}_{\bssigma} + f\|_{H_T}^2\right),
\end{align}
where~$\sup_{t\in[0,T]}\|BB^\ast \hat\Pi(T-t)\|_{\calL(H)} \le C_{B\hat\Pi}$. 
Analogously, one can show that
\begin{align}\label{eq:yhelpdiff}
\|y_{\bssigma}(t)\|_H^2 \le \max{\left(1,e^{(1+2(\rho+C_{B\Pi}))T}\right)} \left(\|y_\circ\|_H^2 + \|BB^\ast h_{\bssigma} + f\|_{H_T}^2\right),\quad \forall t\in [0,T].
\end{align}
One may further apply Proposition~\ref{prop:hbound} to obtain bounds for~\eqref{eq:yhelpdiff1} and~\eqref{eq:yhelpdiff} which are independent of~$\bssigma \in \fkS$. Thus, the constant~$\tilde C_y$ in~\eqref{eq:deltaybound} can be bounded independently of~$\bssigma \in \fkS$, which proves the first result.

Furthermore, for the controls we obtain
\begin{align}
    u_{\bssigma}(t) - \hat{u}_{\bssigma}(t) &= - B^\ast\Pi(T-t)y_{\bssigma}(t) + B^\ast \hat{\Pi}(T-t) \hat{y}_{\bssigma}(t) - B^\ast\delta h(t)\notag \\ &= -B^\ast \delta \Pi(T-t) y_{\bssigma}(t) - B^\ast \hat{\Pi}(T-t) \delta y_{\bssigma}(t) -B^\ast \delta h(t)\notag
\end{align}
and thus
\begin{align}
    \|u_{\bssigma}(t)-\hat{u}_{\bssigma}(t)\|_U &\le \|B^\ast\delta \Pi(T-t)\|_{\calL(H,U)} \|y_{\bssigma}(t)\|_H\notag\\&\quad + \|B^\ast\hat{\Pi}(T-t)\|_{\calL(H,U)} \|\delta y_{\bssigma}(t)\|_H + \|B^\ast\delta h(t)\|_U \notag\\
    &\le C_u \bigg(\max_{s \in [0,T]} \left(\|B^\ast \delta \Pi(T-s)\|_{\calL(H,U)} +\|B^\ast \delta h(s)\|_U\right)\bigg),\notag
\end{align}
where we used the bound on the difference of the trajectories,~\eqref{eq:yhelpdiff}, as well as the fact that $\|B^\ast\hat{\Pi}\|_{\calL(H,U)}$ is bounded independently of~$\bssigma \in \fkS$ to make~$C_u$ independent of~$\bssigma \in \fkS$.
\end{proof}

We point out that Theorem~\ref{thm:convtrajcont} holds for arbitrary cubature rules. The regularity bounds obtained in Sect.~\ref{sec:analyticFB} can also be used for worst case error analysis of sparse grid integration, for instance.

\subsection{Suboptimality of the feedback}
Suppose there is a `true' parameter~$\bar\bssigma \in \fkS$. Let us denote by~$K_{\bar\bssigma}(t,\cdot) = - B^\ast \Pi_{\bar\bssigma}(T-t)(\cdot) - B^\ast h_{\bar\bssigma}(t)$ the optimal feedback for~${\bar\bssigma} \in \fkS$, leading to the closed-loop system
\begin{align}
    \dot{y}_{\bar\bssigma} = (\calA_{\bar\bssigma} + B {K}_{\bar\bssigma}) {y}_{\bar\bssigma} + f,\qquad {y}_{\bar\bssigma}(0) = y_\circ. \notag
\end{align}

The trajectory and control corresponding to the feedback~\eqref{eq:averfb} are close to the optimal trajectory and optimal control for~$\bar \bssigma$ provided that the feedback~$K$ is close to~$K_{\bar\bssigma}$ meaning that~$B^\ast \Pi := \int_\fkS B^\ast \Pi_{\bssigma} \mathrm d\bssigma$ and~$B^\ast h := \int_\fkS B^\ast h_{\bssigma} \mathrm d\bssigma$ are close to~$B^\ast \Pi_{\bar\bssigma}$ and~$B^\ast h_{\bar\bssigma}$, respectively.
Writing~$\delta y_{\bar\bssigma} := {y}_{\bar\bssigma} - y_{\bssigma} $,~$B^\ast\delta \Pi_{\bar\bssigma} :=  B^\ast\Pi_{\bar\bssigma} - B^\ast{\Pi}$, and~$B^\ast \delta h_{\bar\bssigma} = B^\ast h_{\bar\bssigma}- B^\ast {h}$, we find
\begin{align}
    \dot{\delta y}_{\bar\bssigma} = (\calA_{\bar\bssigma} - B B^\ast\Pi_{\bar\bssigma}) \delta y_{\bar\bssigma} - B B^\ast (\delta \Pi_{\bar\bssigma} {y}+\delta h_{\bar\bssigma}), \quad \delta y_{\bar\bssigma}(0) = 0, \notag
\end{align}
and the subsequent result follows.
\begin{theorem}\label{thm:suboptimal}
    Let~\eqref{A1},~\eqref{A2},~\eqref{A3},~\eqref{eq:A4},~\eqref{eq:A5}, and~\eqref{eq:A6} hold. Then, it holds that
\begin{align}
    \|y_{\bar\bssigma}(t)-{y}_{\bssigma}(t)\|_H &\le \bar C_y\max_{t \in [0,T]} \left( \|B^\ast \delta \Pi_{\bar\bssigma}(T-t)\|_{\calL(H,U)} + \|B^\ast \delta h_{\bar\bssigma}(t) \|_U\right) \quad \forall t\in [0,T],\notag \\
    \|u_{\bar\bssigma}(t)-{u}_{\bssigma}(t)\|_U &\le \bar C_u\max_{t \in [0,T]} \left( \|B^\ast \delta \Pi_{\bar\bssigma}(T-t)\|_{\calL(H,U)} + \|B^\ast \delta h_{\bar\bssigma}(t) \|_U\right)\quad \forall t\in [0,T],\notag
\end{align}
where the constants~$\bar C_u$ and~$\bar C_y$ are independent of~$\bssigma \in \fkS$.
\end{theorem}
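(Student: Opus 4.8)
The plan is to mirror the proof of Theorem~\ref{thm:convtrajcont} almost verbatim, since the error equation for $\delta y_{\bar\bssigma} = y_{\bar\bssigma} - y_{\bssigma}$ stated just above the theorem has exactly the same structure as~\eqref{eq:deltay}: the closed-loop generator is $\calA_{\bar\bssigma} - BB^\ast\Pi_{\bar\bssigma}$ and the inhomogeneity is $-BB^\ast(\delta\Pi_{\bar\bssigma}\, y_{\bssigma} + \delta h_{\bar\bssigma})$, which plays the role of $-B(B^\ast\delta\Pi\,\hat y + B^\ast\delta h)$ there. First I would record that, by~\eqref{eq:boundPi} with $\bsnu=\boldsymbol{0}$, the feedback operator $BB^\ast\Pi_{\bar\bssigma}(T-\cdot)$ lies in $L^\infty(0,T;\calL(H))$ with a bound that is uniform in $\bar\bssigma \in \fkS$; hence, for $f \in V_T'$ and $y_\circ \in H$, standard parabolic regularity gives $y_{\bar\bssigma}, y_{\bssigma} \in W_T(V,V')$ and therefore $\delta y_{\bar\bssigma} \in W_T(V,V')$, so that testing the equation with $\delta y_{\bar\bssigma}$ is justified.

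Next I would test the error equation with $\delta y_{\bar\bssigma}$ and use~\eqref{A1},~\eqref{A2}, and Young's inequality exactly as in the derivation leading to~\eqref{eq:deltaybound}. Writing $C_{B\Pi_{\bar\bssigma}}$ for the uniform bound $\sup_{t\in[0,T]}\|BB^\ast\Pi_{\bar\bssigma}(T-t)\|_{\calL(H)}$, this produces a differential inequality of the form $\tfrac12\tfrac{\rmd}{\rmd t}\|\delta y_{\bar\bssigma}(t)\|_H^2 \le (\rho + C_{B\Pi_{\bar\bssigma}} + \tfrac12)\|\delta y_{\bar\bssigma}(t)\|_H^2 + \tfrac12\|B(B^\ast\delta\Pi_{\bar\bssigma}(T-t)y_{\bssigma}(t) + B^\ast\delta h_{\bar\bssigma}(t))\|_H^2$, and Gronwall's lemma then yields the first asserted bound, with the inhomogeneity controlled by $\max_{s\in[0,T]}(\|B^\ast\delta\Pi_{\bar\bssigma}(T-s)\|_{\calL(H,U)} + \|B^\ast\delta h_{\bar\bssigma}(s)\|_U)$. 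To turn the prefactor into a constant $\bar C_y$ independent of $\bssigma$ (and of $\bar\bssigma$), I would bound $\|y_{\bssigma}\|_{H_T}$ uniformly, arguing as for~\eqref{eq:yhelpdiff} and invoking Proposition~\ref{prop:hbound} to control $\|BB^\ast h_{\bssigma} + f\|_{H_T}$ independently of $\bssigma$.

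For the control error I would decompose, using $u_{\bar\bssigma} = -B^\ast\Pi_{\bar\bssigma}(T-\cdot)y_{\bar\bssigma} - B^\ast h_{\bar\bssigma}$ and $u_{\bssigma} = -B^\ast\Pi(T-\cdot)y_{\bssigma} - B^\ast h$, into $u_{\bar\bssigma}(t) - u_{\bssigma}(t) = -B^\ast\Pi_{\bar\bssigma}(T-t)\delta y_{\bar\bssigma}(t) - B^\ast\delta\Pi_{\bar\bssigma}(T-t)y_{\bssigma}(t) - B^\ast\delta h_{\bar\bssigma}(t)$, and then apply the triangle inequality together with the uniform bound on $\|B^\ast\Pi_{\bar\bssigma}\|_{\calL(H,U)}$, the just-established trajectory estimate for $\delta y_{\bar\bssigma}$, and the uniform bound on $\|y_{\bssigma}(t)\|_H$. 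Since every constant entering these steps is controlled via~\eqref{eq:boundPi} and Proposition~\ref{prop:hbound} independently of $\bssigma, \bar\bssigma \in \fkS$, the resulting $\bar C_u$ is independent of $\bssigma$ as claimed. I do not expect a genuine obstacle here: the argument is a structural duplicate of Theorem~\ref{thm:convtrajcont}, and the only point requiring care is the uniformity of $C_{B\Pi_{\bar\bssigma}}$ and of $\|y_{\bssigma}\|_{H_T}$ in the true parameter $\bar\bssigma$, which is exactly what the $\bsnu=\boldsymbol{0}$ case of~\eqref{eq:boundPi} and Proposition~\ref{prop:hbound} supply.
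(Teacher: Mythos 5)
Your proposal is correct and matches the paper's approach exactly: the paper's own proof of Theorem~\ref{thm:suboptimal} consists of the single remark that it ``follows the same steps as the proof of Theorem~\ref{thm:convtrajcont}'', and your write-up fleshes out precisely those steps (parabolic regularity via the uniform bound~\eqref{eq:boundPi} with $\bsnu=\boldsymbol{0}$, testing the error equation with $\delta y_{\bar\bssigma}$, Gronwall, uniform trajectory bounds via Proposition~\ref{prop:hbound}, and the triangle-inequality decomposition of the control error). The only cosmetic difference is that you group the control error as $-B^\ast\Pi_{\bar\bssigma}\delta y_{\bar\bssigma}-B^\ast\delta\Pi_{\bar\bssigma}y_{\bssigma}-B^\ast\delta h_{\bar\bssigma}$ rather than putting the averaged operator on $\delta y_{\bar\bssigma}$, which is an equivalent regrouping requiring the same uniform bounds.
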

The proof of Theorem~\ref{thm:suboptimal} follows the same steps as the proof of Theorem~\ref{thm:convtrajcont}. For this reason, Theorem~\ref{thm:suboptimal} also remains true when~$K$ is replaced by its approximation~$\hat{K}$.



\appendix
\section{Proof of Lemma~\ref{lem:apriori_unc}}\label{sec:appendix}

\begin{proof}
Let~$f\in V'_T$ and~$y_\circ \in H$ be arbitrary. We will show that the solution of
\begin{align}\label{eq:eqn}
    \dot{y}_{\bssigma} = \calA_{\bssigma} y_{\bssigma} + f,\qquad y_{\bssigma}(0) = y_\circ,
\end{align}
satisfies the~\emph{a priori} estimate
\begin{align}
    \|y_{\bssigma}\|_{W_T(V,V')} \le c_A(T) \big(\|f\|_{V'_T}^2 + \|y_\circ\|_{H}^2\big)^{\frac{1}{2}}, \quad \forall \bssigma \in \fkS\notag
\end{align}
with~$T\mapsto c_A(T)$ continuous and monotonically increasing and independent of~$\bssigma \in \fkS$, which proves the result. To this end, we test~\eqref{eq:eqn} against~$y$, leading, together with~\eqref{A2}, to
    \begin{align}
        \frac{\rm d}{\rm dt} \|y\|_H^2 &\le -2 \theta \|y\|_V^2 + 2 \rho \|y\|_H^2 + 2\|f\|_{V'} \|y\|_V\label{eq:ex1help}\\ &\le -2 \theta \|y\|_V^2 + 2 \rho \|y\|_H^2 + \frac{1}{2\theta} \|f\|_{V'}^2 + 2\theta \|y\|_V^2 \le 2 \rho \|y\|_H^2 +\frac{1}{2\theta} \|f\|_{V'}^2,\notag
    \end{align}
    which implies with Gronwall's lemma that 
    \begin{align}\label{eq:ex1help2}
        \|y\|^2_H \le e^{2\rho T} \left( \|y_\circ\|^2_H + \|f\|_{V'_T}^2 \right).
    \end{align}
    Furthermore, using Young's inequality, we obtain from~\eqref{eq:ex1help} that
    \begin{align}
        \frac{\rm d}{\rm dt} \|y\|_H^2  + \theta \|y\|_V^2 &\le 2 \rho \|y\|_H^2 + \frac{1}{\theta} \|f\|_{V'}^2,\notag
    \end{align}
    which, after integration over~$[0,T]$, gives
    \begin{align}
        \|y(T)\|_H^2 + \theta \int_0^T \|y\|_V^2\mathrm dt   &\le \|y(0)\|_H^2 + 2 \rho \int_0^T \|y\|_H^2 \mathrm dt + \frac{1}{\theta} \|f\|_{V'_T}^2,\label{eq:yT-bound}
    \end{align}
    and thus 
    \begin{align}\label{eq:ex1help3}
        \|y\|_{L^2(0,T;V)}^2 \le \frac{1}{\theta}\|y_\circ\|^2_H + \frac{2}{\theta}\rho T e^{2\rho T} \left( \|y_\circ\|^2_H + \|f\|_{V'_T}^2 \right)+ \frac{1}{\theta^2} \|f\|_{V'_T}^2,
    \end{align}
    where we used~\eqref{eq:ex1help2}.
    Moreover, from~\eqref{eq:eqn} we obtain that
    \begin{align}\label{eq:ex1help4}
        \|\dot{y}\|^2_{V'} &\le 2\|\calA y\|_{V'}^2 + 2\|f\|_{V'}^2 \le 2C_\calA^2 \|y\|_V^2 + 2 \|f\|_{V'}^2.
    \end{align}
    Finally, integration of~\eqref{eq:ex1help4} together with~\eqref{eq:ex1help3} gives
    \begin{align}
        \|y\|_{W_T(V,V')}^2 
        &=  \|y(t)\|_{V_T}^2 +  \|\dot{y}(t)\|^2_{V'_T} \le c_A(T)^2 \left( \|y_\circ\|^2_H + \|f\|_{V'_T}^2\right), \label{eq:uncapriori}
    \end{align}
    where~$c_A(T) := \left((1+2C_\calA)  \frac{1 + 2\rho T e^{2\rho T}}{\theta} + (1+2C_\calA) \frac{2}{\theta} \rho T e^{2\rho T} + \frac{(1+2C_\calA)}{\theta^2} +  2\right)^{\frac{1}{2}}$, which is continuous and monotonically increasing in~$T$.
\end{proof}

\section{An auxiliary result}
The following result is motivated by~\cite{guka_prep}.
\begin{lemma}\label{lem:recursion}
Let $(\Upsilon_{\bsnu})_{\bsnu\in\mathcal F}$ and~$\bsb = (b_j)_{j\ge 1}$ be sequences of nonnegative numbers satisfying
$$
\Upsilon_{\bsnu}\leq \frac{1}{2}C\sum_{\substack{\boldsymbol m\leq \bsnu\\ \boldsymbol m\neq\mathbf 0}}\binom{\bsnu}{\boldsymbol m}(|\boldsymbol m|+2)! \boldsymbol b^{\boldsymbol m}\Upsilon_{\bsnu-\boldsymbol m}+\frac{1}{2}C(|\bsnu|+2)! \boldsymbol b^{\bsnu}\quad\text{for all}~\bsnu\in\mathcal F,
$$
where~$C>0$ is a constant. Then, it holds that
$$
\Upsilon_{\bsnu}\leq \frac{1}{2}(1+C)^{\max\{|\bsnu|-1,0\}}C^{\delta_{\bsnu,\mathbf 0}}(C+C^2)^{1-\delta_{\bsnu,\mathbf 0}}(|\bsnu|+2)! \boldsymbol b^{\bsnu}.
$$
\end{lemma}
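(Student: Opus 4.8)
The plan is to argue by strong induction on the order $n:=|\bsnu|$, reducing the multi-index recursion to a one-dimensional inequality by the Vandermonde convolution already exploited in the proof of Lemma~\ref{lem:objregular}. It is convenient to first note that, since $C+C^2=C(1+C)$, the asserted bound may be written uniformly as
\begin{align*}
\Upsilon_{\bsnu}\le \Phi(\bsnu):=\tfrac12\,C\,(1+C)^{|\bsnu|}\,(|\bsnu|+2)!\,\bsb^{\bsnu},\qquad \bsnu\in\calF,
\end{align*}
the two displayed forms coinciding for $\bsnu=\mathbf{0}$ (both equal $C$) and for $\bsnu\neq\mathbf{0}$. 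Thus it suffices to prove $\Upsilon_{\bsnu}\le\Phi(\bsnu)$ for every $\bsnu$; the $\delta_{\bsnu,\mathbf{0}}$-bookkeeping in the conclusion merely records whether the factor $C$ (from the base case) or $C+C^2$ (from the inductive step) is produced.

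For the base case $\bsnu=\mathbf{0}$ the constraint $\bsm\le\mathbf{0}$, $\bsm\neq\mathbf{0}$ is vacuous, so the convolution sum is empty and the hypothesis gives $\Upsilon_{\mathbf{0}}\le\tfrac12 C\,(2)!=C=\Phi(\mathbf{0})$. For the inductive step fix $\bsnu\neq\mathbf{0}$ and assume $\Upsilon_{\bsmu}\le\Phi(\bsmu)$ whenever $|\bsmu|<n$. Since every surviving summand has $|\bsnu-\bsm|=n-|\bsm|<n$, inserting the induction hypothesis, using $\bsb^{\bsm}\bsb^{\bsnu-\bsm}=\bsb^{\bsnu}$, and pulling out $\bsb^{\bsnu}$ leaves a sum in which the only $\bsm$-dependence beyond $|\bsm|$ is the factor $\binom{\bsnu}{\bsm}$. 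Grouping by $\ell=|\bsm|\in\{1,\dots,n\}$ and invoking $\sum_{|\bsm|=\ell,\ \bsm\le\bsnu}\binom{\bsnu}{\bsm}=\binom{n}{\ell}$ collapses the estimate to the purely scalar assertion
\begin{align*}
\tfrac14 C^2\sum_{\ell=1}^{n}\binom{n}{\ell}(\ell+2)!\,(n-\ell+2)!\,(1+C)^{n-\ell}+\tfrac12 C\,(n+2)!\ \le\ \tfrac12 C\,(1+C)^{n}\,(n+2)!.
\end{align*}

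The crux, and the step I expect to be the main obstacle, is exactly this scalar inequality, where the powers of $(1+C)$ and the factorial weights must be reconciled. The natural route is to divide through by $\tfrac12 C\,(n+2)!$ and use the elementary identity $\binom{n}{\ell}(\ell+2)!(n-\ell+2)!=n!\,(\ell+1)(\ell+2)(n-\ell+1)(n-\ell+2)$, reducing the claim to controlling $\tfrac{C}{2}\sum_{\ell=1}^{n}R(n,\ell)(1+C)^{n-\ell}$ by $(1+C)^{n}-1$, where $R(n,\ell)=(\ell+1)(\ell+2)(n-\ell+1)(n-\ell+2)/[(n+1)(n+2)]$. The delicacy is that the polynomial weight $R(n,\ell)$ peaks at the balanced indices $\ell\approx n/2$, precisely where $(1+C)^{n-\ell}$ is still sizeable, so a crude term-by-term bound is too lossy; the summation must be performed keeping careful track of how the factorial growth interacts with the expansion of $(1+C)^{n}$, and it is here that the admissible size of the constants in $\Phi$ is decided. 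Once this combinatorial estimate is secured the induction closes and delivers $\Upsilon_{\bsnu}\le\Phi(\bsnu)$, hence the stated bound, for all $\bsnu\in\calF$. Degenerate indices with $\bsb^{\bsnu}=0$ need no separate treatment: each term on the right of the recursion then carries a vanishing factor $\bsb^{\bsm}$ or $\bsb^{\bsnu-\bsm}$, so the induction hypothesis forces $\Upsilon_{\bsnu}=0=\Phi(\bsnu)$ at once.
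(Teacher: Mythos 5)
Your strategy---strong induction on $n=|\bsnu|$ plus the Vandermonde convolution to collapse the multi-index sum---is the same as the paper's, and your reduction of the inductive step to the scalar inequality
\[
\tfrac14 C^2\sum_{\ell=1}^{n}\binom{n}{\ell}(\ell+2)!\,(n-\ell+2)!\,(1+C)^{n-\ell}+\tfrac12 C\,(n+2)!\;\le\;\tfrac12 C\,(1+C)^{n}\,(n+2)!
\]
is correct, as is your uniform rewriting $\Phi(\bsnu)=\tfrac12 C(1+C)^{|\bsnu|}(|\bsnu|+2)!\,\bsb^{\bsnu}$ of the conclusion. But your proof stops exactly there: the ``crux'' scalar estimate is announced as the main obstacle and then simply assumed to be ``secured.'' That is a genuine gap---the inductive step has no content without it. Worse, the gap cannot be filled, because the scalar inequality is false: for $n=2$ and $C=1$ the left-hand side equals $\tfrac14\bigl(2\cdot 3!\cdot 3!\cdot 2+4!\cdot 2!\bigr)+12=48+12=60$, while the right-hand side equals $\tfrac12\cdot 4\cdot 24=48$. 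Equivalently, in one variable with $b_1=1$ and $C=1$, the sequence defined by taking equality in the recursion, $\Upsilon_0=1$, $\Upsilon_1=6$, $\Upsilon_2=60$, satisfies the hypothesis of Lemma~\ref{lem:recursion} but violates its conclusion, which gives $48$ at $\bsnu=(2)$. So the statement itself is false as written, and no amount of care in performing the summation can close your induction.

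For comparison: the paper's own proof hits this identical obstacle and gets past it only through an invalid step. After its Vandermonde step it asserts the ``equality''
\[
\sum_{\ell=1}^{|\bsnu|-1} |\bsnu|!\,(|\bsnu|-\ell+2)(|\bsnu|-\ell+1)(\ell+2)(\ell+1)\,(1+C)^{|\bsnu|-\ell-1}
\;=\;
|\bsnu|!\sum_{\ell=1}^{|\bsnu|-1} 2(\ell+2)(\ell+1)\,(1+C)^{|\bsnu|-\ell-1},
\]
i.e.\ it silently replaces the factor $(|\bsnu|-\ell+2)(|\bsnu|-\ell+1)\ge 6$ by $2$. This strictly decreases every term, so it cannot serve as an upper bound, and the subsequent geometric-series computation closes the induction only because of this error. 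Your instinct that the interplay between the weight $R(n,\ell)$ (which peaks at $\ell\approx n/2$) and the powers of $(1+C)$ is the decisive point is therefore exactly right: it is precisely where the published argument fails, and where the statement breaks down. What you have is an honest but incomplete proof of a false claim; a correct version of the lemma would have to weaken the conclusion, for instance by replacing the base $1+C$ with a sufficiently large constant $\theta(C)>1+C$ (equivalently, replacing $\bsb$ by $\theta\bsb$ in the bound), so that the inductive step genuinely closes.
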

\begin{proof}
We prove the result by induction on~$|\bsnu|$. For~$\bsnu = \boldsymbol{0}$ we have~$\Upsilon_{\boldsymbol{0}}\le C$. Let $\bsnu\in\mathcal F\setminus\{\mathbf 0\}$. Below, we write $\boldsymbol m\leq \bsnu$ for two multi-indices $\boldsymbol m$ and $\bsnu$ if the inequality holds component-wise. Assuming that the claim has been proved for all multi-indices with order less than $|\bsnu|$, we arrive at
\begin{align*}
&\Upsilon_{\bsnu}\leq \frac{1}{2}C\sum_{\substack{\boldsymbol m\leq \bsnu\\ \boldsymbol m\neq\mathbf 0}}\binom{\bsnu}{\boldsymbol m} \bigg((|\boldsymbol m|+2)! \boldsymbol b^{\boldsymbol m} \frac{1}{2}(1+C)^{\max\{|\bsnu|-|\boldsymbol m|-1,0\}}C^{\delta_{\bsnu-\boldsymbol m,\mathbf 0}}(C+C^2)^{1-\delta_{\bsnu-\boldsymbol m,\mathbf 0}}\notag \\&\quad\quad \times ((|\bsnu|-|\boldsymbol m|+2)!) \boldsymbol b^{\bsnu-\boldsymbol m}\bigg) +\frac{1}{2}C(|\bsnu|+2)! \boldsymbol b^{\bsnu}.
\end{align*}
Separating out the~$\bsm = \bsnu$ term and denoting~$\boldsymbol{\Lambda}_{C,|\bsnu|,\bsb} := \frac{1}{2}C(1+C)(|\bsnu|+2)! \boldsymbol b^{\bsnu}$ leads to
\begin{align*}
&\Upsilon_{\bsnu}\leq \frac{1}{4} C (C+C^2)\!\!\!\sum_{\substack{\boldsymbol m\leq \bsnu\\ \mathbf 0 \neq \boldsymbol m \neq \bsnu}}\!\!\!\binom{\bsnu}{\boldsymbol m} (|\boldsymbol m|+2)! \boldsymbol b^{\boldsymbol m}(1+C)^{|\bsnu|-|\boldsymbol m|-1} ((|\bsnu|-|\boldsymbol m|+2)!) \boldsymbol b^{\bsnu-\boldsymbol m} + \boldsymbol{\Lambda}_{C,|\bsnu|,\bsb}\\
&= \frac{1}{4}C(C+C^2)\boldsymbol b^{\bsnu}\sum_{\ell=1}^{|\bsnu|-1}(|\bsnu|-\ell+2)!(\ell+2)! (1+C)^{|\bsnu|-\ell-1}\frac{|\bsnu|!}{(|\bsnu|-\ell)!\ell!} + \boldsymbol{\Lambda}_{C,|\bsnu|,\bsb}\\
&=\frac{1}{4} C(C+C^2)\boldsymbol b^{\bsnu}\sum_{\ell=1}^{|\bsnu|-1} |\bsnu|! (|\bsnu|-\ell+2)(|\bsnu|-\ell+1)(\ell+2)(\ell+1) (1+C)^{|\bsnu|-\ell-1}+ \boldsymbol{\Lambda}_{C,|\bsnu|,\bsb}\\
&= \frac{1}{4} C(C+C^2)\boldsymbol b^{\bsnu} |\bsnu|!\sum_{\ell=1}^{|\bsnu|-1}  2(\ell+2)(\ell+1) (1+C)^{|\bsnu|-\ell-1} + \boldsymbol{\Lambda}_{C,|\bsnu|,\bsb}\\
&\le \frac{1}{2}C(C+C^2)\boldsymbol b^{\bsnu} (|\bsnu|+2)!\sum_{\ell=1}^{|\bsnu|-1} (1+C)^{|\bsnu|-\ell-1} +\boldsymbol{\Lambda}_{C,|\bsnu|,\bsb}\\
&= (|\bsnu|+2)! \bsb^\bsnu \bigg(\frac{1}{2}C(1+C) + \frac{1}{2}C(C+C^2) \sum_{\ell=1}^{|\bsnu|-1} (1+C)^{|\bsnu|-\ell-1}\bigg)\\
&= (|\bsnu|+2)! \bsb^\bsnu \bigg(\frac{1}{2}C(1+C) + \frac{1}{2}C(C+C^2) \frac{(C+1)^{|\bsnu|}-C-1}{C+C^2}\bigg)\\&= (|\bsnu|+2)! \bsb^\bsnu \frac{1}{2}C (C+1)^{|\bsnu|}.
\end{align*}
This implies the claim.
\end{proof}

\section{Consistency of the dimension truncation}
\label{sec:appendixdimtrunc}
Let~$\calA_{\bssigma,s} := \calA((\bssigma_s,\boldsymbol{0})) = \calA((\sigma_1,\sigma_2,\ldots,\sigma_s,0,0,\ldots ))$ and suppose that~\eqref{eq:A4} holds true. In this section we show that
\begin{align}\label{eq:Aconv}
    \calA_{\bssigma,s}x \overset{s \to \infty}{\longrightarrow} \calA_{\bssigma} x\quad \forall x \in D(\calA)\qquad \text{and} \qquad \calA_{\bssigma,s}^\ast y \overset{s \to \infty}{\longrightarrow} \calA_{\bssigma}^\ast y \quad \forall y \in D(\calA^\ast)
\end{align} 
imply that~$\|\scrK_{\bssigma}(t) - \scrK_{\bssigma,s}(t)\|_Z \overset{s \to \infty}{\longrightarrow} 0$ for~$\scrK_{\bssigma} \in \{-B^\ast \Pi_{\bssigma}, -B^\ast h_{\bssigma}\}$, and their~$s$-dimensional counterparts, with~$Z \in \{\calL(H,U),U\}$.

We begin with the homogeneous case~$\scrK_{\bssigma} = -B^\ast \Pi_{\bssigma}$. To do so, let~$(y_{\bssigma,s},u_{\bssigma,s})$ denote the minimizer of the dimensionally truncated problem of minimizing~\eqref{eq:J1} subject to~\eqref{eq:psys} with~$\calA_{\bssigma}$ replaced by~$\calA_{\bssigma,s}$. 

The operator~$\delta \Pi_{\bssigma} = \Pi_{\bssigma,s}-\Pi_{\bssigma}$ is bounded, linear, and self-adjoint. Thus, using~\cite[Thm.~5.23.8]{naylorsell}, we have
\begin{align}
    &\|\Pi_{\bssigma,s}-\Pi_{\bssigma}\|_{\calL(H)}  = \sup_{\|y_\circ\|_H = 1} | \calJ(y_{\bssigma,s},u_{\bssigma,s}) - \calJ(y_{\bssigma},u_{\bssigma})|\label{eq:PIsconv} \\
    &\le \sup_{\|y_\circ\|_H = 1} \frac{1}{2} \left(\|Q\|^2_{\calL(H)} \|y_{\bssigma,s} - y_{\bssigma}\|_{H_T} \|y_{\bssigma,s} + y_{\bssigma}\|_{H_T}+\|u_{\bssigma,s} - u_{\bssigma}\|_{U_T} \|u_{\bssigma,s} + u_{\bssigma}\|_{U_T}\right. \notag\\ &\quad\quad+\left.\|P\|^2_{\calL(H)} \|y_{\bssigma,s}(T) - y_{\bssigma}(T)\|_{H} \|y_{\bssigma,s}(T) + y_{\bssigma}(T)\|_{H}\right).\notag
\end{align}
Hence, it remains to show that~$y_{\bssigma,s} \to y_{\bssigma}$ in~$C([0,T];H)$ and~$u_{\bssigma,s} \to u_{\bssigma}$ in~$U_T$ as~$s\to\infty$.

As a consequence of the cost functional~$y_{\bssigma,s}$ and~$u_{\bssigma,s}$ are bounded uniformly in~$s$ in~$H_T$ and~$U_T$, respectively. Hence, there exist~$\widetilde{y}_{\bssigma,s}$ and~$\widetilde{u}_{\bssigma,s}$ such that~$y_{\bssigma,s} \rightharpoonup \widetilde{y}_{\bssigma,s}$ in~$H_T$ and~$u_{\bssigma,s} \rightharpoonup \widetilde{u}_{\bssigma,s}$ in~$U_T$.

Next, let~$\bar{u} \in U_T$ be arbitrary and let~$\bar{y}_{\bssigma,s}$ and~$\bar{y}_{\bssigma}$ be the solutions of
\begin{align}\label{eq:sconvy2}
    \dot{\bar{y}}_{\bssigma} = \calA_{\bssigma} \bar{y}_{\bssigma} + B \bar{u},\qquad \text{and} \qquad
    \dot{\bar{y}}_{\bssigma,s}= \calA_{\bssigma,s} \bar{y}_{\bssigma,s} + B \bar{u},
\end{align}
under the same initial condition~$\bar{y}_{\bssigma}(0) = \bar{y}_{\bssigma,s} = y_\circ$. Setting~$e_{\bssigma,s} := \bar{y}_{\bssigma} - \bar{y}_{\bssigma,s}$, we find
\begin{align}\label{eq:Leb}
e_{\bssigma,s}(t) = \left(S_{\bssigma}(t)-S_{\bssigma,s}(t)\right)y_\circ + \int_0^t \left(S_{\bssigma}(t-\tau)-S_{\bssigma,s}(t-\tau)\right) B \bar{u}(\tau)\,\mathrm d\tau
\end{align}
where~$S_{\bssigma}$ and~$S_{\bssigma,s}$ are the semigroups generated by~$\calA_{\bssigma}$ and~$\calA_{\bssigma,s}$, respectively. With the fact that $\|S_{\bssigma}(t)\|_{\calL(H)} \le M e^{-\rho t}$ and~$\|S_{\bssigma,s}(t)\|_{\calL(H)} \le M e^{-\rho t}$, as well as the assumption that~$\calA_{\bssigma,s} x \to  \calA_{\bssigma} x$ for all~$x \in D(\calA)$, the Trotter--Kato theorem (see~\cite[Thm.~4.5]{pazy}) implies~$S_{\bssigma,s}(t)x \to S_{\bssigma}(t)x$ for all~$x \in H$ uniformly for all~$t \in [0,T]$. Moreover, the fact that~$\|S_{\bssigma}(t)\|_{\calL(H)} \le M e^{-\rho t}$ and~$\|S_{\bssigma,s}(t)\|_{\calL(H)} \le M e^{-\rho t}$, Lebesgue's dominated convergence theorem, and~\eqref{eq:Leb} imply that
\begin{align}\label{eq:sconvy}
    \sup_{t \in [0,T]} \|e_{\bssigma,s}(t)\|_H \to 0 \quad \text{ as } \quad s \to \infty.
\end{align}

Next, we show that~$y_{\bssigma,s} \rightharpoonup\widetilde{y}_{\bssigma} = \widetilde{y}_{\bssigma}(\widetilde{u}_{\bssigma})$. To this end, we observe for~$v \in D(\calA^\ast)$ that
\begin{align}\label{eq:weakconvhelp}
     \langle y_{\bssigma,s}, \calA_{\bssigma,s}^\ast v \rangle_{H_T} = \underbrace{\langle y_{\bssigma,s}, (\calA_{\bssigma,s}^\ast - \calA_{\bssigma}^\ast) v\rangle_{H_T}}_{\rm term_1} + \underbrace{\langle  y_{\bssigma,s} - \widetilde{y}_{\bssigma}, \calA_{\bssigma}^\ast v\rangle_{H_T}}_{\to 0 \text{ as } s\to \infty} + \langle \widetilde{y}_{\bssigma}, \calA_{\bssigma}^\ast v\rangle_{H_T},
\end{align}
where~${\rm term_1} \le \|(A_{\bssigma,s}^\ast - \calA_{\bssigma}^\ast)v\|_H \|y_{\bssigma,s}\|_{H_T} \to 0$ as~$s \to \infty$ by~\eqref{eq:Aconv}.

Further, integrating~$\dot{y}_{\bssigma,s}(\tau) = \calA_{\bssigma,s} y_{\bssigma,s}(\tau) + B u_{\bssigma,s}(\tau)$ with~$y_{\bssigma,s}(0) = y_\circ$ over~$[0,t]$ leads to
\begin{align*}
    \langle y_{\bssigma,s}(t) - y_\circ, v\rangle_H = \int_0^t \left(\langle y_{\bssigma,s}(\tau) , \calA_{\bssigma,s}^\ast v \rangle_H + \langle Bu_{\bssigma,s}(\tau), v\rangle_H \right) \mathrm d\tau \quad \forall v \in D(\calA^\ast).
\end{align*}
Combining the weak convergence of~$y_{\bssigma,s}$ and~$u_{\bssigma,s}$ with~\eqref{eq:weakconvhelp} we obtain
\begin{align*}
    \langle \widetilde y_{\bssigma}(\tau) - y_\circ, v\rangle_H = \int_0^T \left(\langle \widetilde y_{\bssigma}(\tau) , \calA_{\bssigma}^\ast v \rangle_H + \langle B\widetilde u_{\bssigma}(\tau), v\rangle_H \right) \mathrm d\tau \quad \forall v \in D(\calA^\ast),
\end{align*}
which shows that~$y_{\bssigma,s} \rightharpoonup \widetilde{y}_{\bssigma} = {y}_{\bssigma}(\widetilde{u}_{\bssigma})$.

In order to show optimality of~$\widetilde{u}_{\bssigma}$ let~$\hat{y}_{\bssigma,s}$ denote the solution of~$\dot{\hat{y}}_{\bssigma,s} = \calA_{\bssigma,s} \hat{y}_{\bssigma,s} + B u_\bssigma$ with $\hat{y}_{\bssigma,s}(0) = y_\circ$. By the weak lower semicontinuity of~$\calJ$ we obtain \begin{align}\label{eq:dimtrunC7}
    \calJ(\widetilde{y}_{\bssigma},\widetilde{u}_{\bssigma}) \le \liminf_{s\to \infty} \calJ(y_{\bssigma,s},u_{\bssigma,s}) \le \limsup_{s \to \infty} \calJ(y_{\bssigma,s},u_{\bssigma,s}) \le \limsup_{s \to \infty} \calJ(\hat{y}_{\bssigma,s},u_{\bssigma}) = \calJ(y_{\bssigma},u_{\bssigma}),
\end{align}
where the last step follows from~\eqref{eq:sconvy} by taking~$\bar{u} = u_\bssigma$ in~\eqref{eq:sconvy2}. By uniquenes of the solution to~\eqref{eq:J1} subject to~\eqref{eq:psys} we have that~$
(\widetilde{y}_{\bssigma},\widetilde{u}_\bssigma) = (y_\bssigma,u_\bssigma)$. 
It remains to show that~$u_{\bssigma,s}\to u_{\bssigma}$ strongly. 
For this purpose we use~\eqref{eq:dimtrunC7} and~\cite[Lem.~5.2]{casaskunisch23partII}, which implies that~$u_{\bssigma,s}$ converges in norm to~$u_{\bssigma}$. Together with the weak convergence this implies strong convergence of~$u_{\bssigma,s}$ to~$u_{\bssigma}$. With~\eqref{eq:PIsconv} we conclude that~$\|\Pi_{\bssigma,s} - \Pi_{\bssigma}\|_{\calL(H)} \to 0 $ as~$s\to \infty$, which proves the claim for~$\scrK_{\bssigma} = -B^\ast \Pi_{\bssigma}$.

Furthermore, we conclude that~$\left( \calA_{\bssigma,s}^\ast - BB^\ast\Pi_{\bssigma,s}\right) x \overset{s\to \infty}{\longrightarrow} \left(\calA_{\bssigma}^\ast - BB^\ast\Pi_{\bssigma} \right) x$ for all~$x\in D(\calA^\ast)$, and thus by the Trotter--Kato theorem we have for the semigroups~$\widetilde{S}_{\bssigma}$ and~$\widetilde{S}_{\bssigma,s}$, generated by~$\calA_{\bssigma}^\ast - BB^\ast\Pi_{\bssigma}$ and~$\calA_{\bssigma,s}^\ast - BB^\ast\Pi_{\bssigma,s}$, that~$\widetilde{S}_{\bssigma}x \overset{s\to \infty}{\longrightarrow} \widetilde{S}_{\bssigma,s} x $ for all~$x \in H$ uniformly for all $t \in[0,T]$.

Let~$h_{\bssigma}$ be the solution of~\eqref{eq:h}, and let~$\hat{h}_{\bssigma,s}$, and $h_{\bssigma,s}$ be the solutions of
\begin{align*}
    -\dot{\hat{h}}_{\bssigma,s}(t) &= \left(\calA_{\bssigma,s}^\ast - \Pi_{\bssigma,s}(T-t)BB^\ast\right) \hat{h}_{\bssigma,s}(t) + \Pi_{\bssigma}(T-t) \left( f(t) + \calA_{\bssigma} g(t) - \dot{g}(t)\right),\\
    -\dot{h}_{\bssigma,s}(t) &= \left(\calA_{\bssigma,s}^\ast - \Pi_{\bssigma,s}(T-t)BB^\ast\right) h_{\bssigma,s}(t) + \Pi_{\bssigma,s}(T-t) \left( f(t) + \calA_{\bssigma,s} g(t) - \dot{g}(t)\right),
\end{align*}
which are bounded uniformly in~$s$ in~$C([0,T];H)$ by taking~$\bsnu = \boldsymbol{0}$ in Proposition~\ref{prop:hbound}.
Then, we have
\begin{align}\label{eq:suph}
    \sup_{t\in [0,T]}\|h_{\bssigma} - h_{\bssigma,s}\|_{H} &\le \sup_{t\in [0,T]} \left(\|h_{\bssigma} - \hat{h}_{\bssigma,s}\|_{H} +\|\hat{h}_{\bssigma,s} - h_{\bssigma,s}\|_{H} \right).
\end{align}

For the first term on the right-hand side of \eqref{eq:suph} we have
\begin{align*}
    h_{\bssigma}(t) - \hat{h}_{\bssigma,s}(t) = \int_0^t \left( \widetilde{S}_{\bssigma}(t-\tau) - \widetilde{S}_{\bssigma,s}(t-\tau)\right) \left( \Pi_{\bssigma}(t-\tau) \left( f(\tau) + \calA_{\bssigma} g(\tau) - \dot{g}(\tau)\right)\right) \mathrm d\tau.
\end{align*}
By the Trotter--Kato theorem and Lebesgue's dominated convergence theorem it follows that $\sup_{t\in [0,T]}\|h_{\bssigma}- \hat{h}_{\bssigma,s}\|_{H} \to 0$ as~$s\to \infty$.

For the second term we have
\begin{align*}
    \hat{h}_{\bssigma,s}(t) - h_{\bssigma,s}(t) = \int_0^t  \widetilde{S}_{\bssigma,s}(t-\tau) \left( \Pi_{\bssigma,s}(t-\tau)  \calA_{\bssigma,s} g(t)  - \Pi_{\bssigma}(t-\tau) \calA_{\bssigma} g(t)\right) \mathrm d\tau,
\end{align*}
and hence by Lebesgue's dominated convergence theorem we have~$\sup_{t\in [0,T]}\|\hat{h}_{\bssigma,s}-h_{\bssigma,s} \|_{H} \to 0$ as~$s\to \infty$, which by~\eqref{eq:suph} proves the claim for~$\scrK = -B^\ast h_{\bssigma}$.

\begin{remark}\label{rem:affine_As_conv}
    Let~$\calA_{\bssigma} = A_0 + \sum_{j \ge 1} \sigma_j A_j$ depend in an affine way on the parameters (as in Section~\ref{sec:affine}). Then, we estimate
    \begin{align*}
        \|(\calA_{\bssigma} - \calA_{\bssigma,s}) v\|_{H} = \bigg\|\sum_{j \ge s+1} \sigma_j A_j v\bigg\|_{H} \le \frac{ \|v\|_{D(\calA)} }{2} \sum_{j \ge s+1} \|A_j\|_{\calL(D(\calA),H)}.
    \end{align*}
Taking the limit as~$s \to \infty$, we get~$\lim_{s \to \infty} \|(\calA_{\bssigma} - \calA_{\bssigma,s}) v\|_{H} \to 0$ for all~$v \in D(\calA)$. Analogously, the convergence for the adjoint operator is shown. Thus, a parameterized family of operators depending in an affine way on the parameters satisfies~\eqref{eq:Aconv} and consequently~\eqref{eq:limit_K_s}.
\end{remark}

\section*{Acknowledgments}

P.~Kritzer gratefully acknowledges the support of the Austrian Science Fund (FWF) Project P34808. For open access purposes, the authors have applied a CC BY public copyright license to any author accepted manuscript version arising from this submission.

\bibliographystyle{plain}
\bibliography{references}

\end{document}